\documentclass[graybox]{svmult}

\usepackage{type1cm}        
\usepackage{makeidx}         
\usepackage{graphicx}        
\usepackage{multicol}        
\usepackage[bottom]{footmisc}
\usepackage{hyperref}

\usepackage{newtxtext}       %
\usepackage[varvw]{newtxmath}       

\makeindex             

\usepackage{bookmark}
\makeatletter
\def\toclevel@title{-1}
\def\toclevel@author{0}
\makeatother

\begin{document}

\title*{Liouville theorems and
Evans-Krylov estimates}

\author{Yifan Chen\orcidID{0000-0001-8816-3776},\\
Hans-Joachim Hein\orcidID{0000-0002-3719-9549},\\
Ryan Mc Gowan}


\institute{Yifan Chen \at Dipartimento di Matematica, Universit\`a di Roma Tor Vergata, Via della Ricerca Scientifica 1, I-00133 Roma, Italy \\ e-mail: \href{mailto:chen@mat.uniroma2.it}{chen@mat.uniroma2.it}
\and Hans-Joachim Hein \at Mathematisches Institut, Universität Münster, 48149 M\"unster, Germany\\ e-mail: \href{mailto:hhein@uni-muenster.de}{hhein@uni-muenster.de}
\and Ryan Mc Gowan \at Department of Mathematics, Rutgers University, Piscataway, NJ 08854, USA \\ e-mail: \href{mailto:rmm396@math.rutgers.edu}{rmm396@math.rutgers.edu}}

\maketitle

\abstract{A classical idea in analysis going back at least to work of Simon (1997) is that Liouville theorems for solutions to elliptic or parabolic PDEs are equivalent to Schauder type regularity estimates. The goal of this course is to describe some recent developments of this idea concerning the regularity of the complex Monge-Amp\`ere equation with respect to singular reference metrics. We will start with a quick look at the classical $C^2$ and $C^3$ estimates of Calabi-Aubin-Yau and then present a new proof of the Evans-Krylov $C^{2,\alpha}$ estimate on a Euclidean ball. Based on this we will consider the case of singular backgrounds such as cylinders and cones, discussing some recent work by Hein, Tosatti, Lee and Klemmensen. Our discussion is far from complete and knowledge of Kähler geometry including the Aubin-Yau theorems is assumed. The course includes five exercises with solutions and a problem list.}

\bigskip

\noindent Lecture notes for a mini-course by Hans-Joachim Hein at the Erd\H{o}s Center--Alfr\'ed R\'enyi Institute of Mathematics, Budapest, August 11-15, 2025.

\bigskip

\begin{enumerate}
    \item Classical $C^2$ and $C^3$ estimates and Yau's Liouville Theorem
    \item From Liouville to Evans-Krylov, with applications to cylinders
    \item Calabi-Yau cones and Klemmensen's thesis
    \item Open problems
    \item Solutions to the exercises
\end{enumerate}

\section{Classical \texorpdfstring{$C^2$}{C^2} and \texorpdfstring{$C^3$}{C^3} estimates and Yau's Liouville Theorem}

\subsection{The complex Monge-Ampère equation on a ball in \texorpdfstring{$\mathbb{C}^n$}{C^n}}

We want to consider the complex Monge-Amp\`ere equation
\begin{equation}\label{CMA}
    \det\left(\frac{\partial^2 u}{\partial z_j \partial \bar z_k}\right)=1
\end{equation}
for some smooth function $u:B\to \mathbb{R}$, where $B=B_1(0)\subset \mathbb{C}^n$ denotes the open unit ball in $\mathbb{C}^n$. The matrix in parentheses is automatically Hermitian. We also require it to be nonnegative definite, which means by definition that $u$ is a \emph{plurisubharmonic} (psh) function. This requirement makes \eqref{CMA} an elliptic PDE and is equivalent to the statement that $u$ is the K\"ahler potential of a K\"ahler metric $\omega_u$ on $B$,
\begin{equation*}
    \omega_u = \sqrt{-1} \partial\bar \partial  u = \sqrt{-1}\sum_{j,k=1}^n \frac{\partial^2 u}{\partial z_j \partial \bar z_k} dz_j\wedge d\bar z_k.
\end{equation*}

\noindent \textbf{Question.} Why do we want to consider \eqref{CMA}?
\begin{enumerate}
    \item It allows us to see when a K\"ahler metric is Ricci-flat. Indeed, with $\omega_u$ as above,
    \begin{equation*}
        \mathrm{Ric}_{\omega_u} = -\sqrt{-1}\partial\bar \partial \log(\omega_u^n)=0.
    \end{equation*}
    \item Given $\tilde u :B_1(0)\subset \mathbb{R}^n \to \mathbb{R}$ convex solving the real Monge-Amp\`ere equation
    \begin{equation*}
        \det(D^2\tilde u ) =1,
    \end{equation*}
    then $u(z) := \tilde u(\mathrm{Re}(z))$ is psh and it solves \eqref{CMA}. The associated Ricci-flat K\"ahler metrics $\omega_u$ are exactly the so-called \emph{semi-flat Calabi-Yau metrics}, which play a key role in the SYZ conjecture \cite{YangLi}. On the other hand, the real Monge-Amp\`ere equation has been studied extensively from a PDE point of view \cite{Figalli}.
\end{enumerate}

\noindent Fortunately, then, there exist many solutions to \eqref{CMA}: 

\begin{theorem}[Regularity of the Dirichlet Problem]\label{thm:dir}
    Fix any $\varphi \in C^0(\partial B)$.
    \begin{enumerate}
    \item (Bedford-Taylor \cite{Bedford_Taylor_1976}) There exists a unique $u\in C^0(\overline B)$ with $u|_{\partial B}=\varphi$ such that $u|_{B}$ is psh in the weak sense and solves \eqref{CMA} in the weak sense. 
    \item (Caffarelli-Kohn-Nirenberg-Spruck \cite{CKNS}) If $\varphi \in C^\infty(\partial B)$, then $u\in C^\infty(\overline B)$.  
\end{enumerate}
\end{theorem}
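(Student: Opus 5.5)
The proof has two independent parts. Part (1) is pluripotential theory, via a Perron construction together with the comparison principle. Part (2) is the continuity method, closed by a priori estimates up to the boundary.

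\textbf{Part (1).} I would use the Perron--Bremermann envelope
\[
u(z)=\sup\{v(z): v \text{ psh on } B,\ v\le \varphi \text{ on } \partial B \text{ in the sense of } \limsup,\ (\sqrt{-1}\partial\bar\partial v)^n\ge c_n\, dV\},
\]
with $c_n$ the normalization making $(\sqrt{-1}\partial\bar\partial v)^n = c_n\det(v_{j\bar k})\,dV$. The steps are as follows.

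First, construct barriers so that $u\in C^0(\overline B)$ and $u=\varphi$ on $\partial B$. Since $B$ is strongly pseudoconvex, I would take $\varphi$ harmonically extended, plus $A(|z|^2-1)$, minus a modulus-of-continuity correction, to get a lower barrier. For the upper barrier I would use a harmonic majorant of $\varphi$, since every admissible $v$ is subharmonic.

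Second, prove continuity in the interior. I would use Walsh's translation argument: $\max(u(z+h)-\varepsilon(h),\, u(z))$ is again a competitor near the boundary.

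Third, use the Bedford--Taylor theory of $(\sqrt{-1}\partial\bar\partial\,\cdot)^n$ for bounded psh functions. Its key ingredient is continuity of the operator along decreasing sequences, which follows from integration by parts and the Chern--Levine--Nirenberg inequalities.

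Fourth, show the envelope solves the equation. By Choquet's lemma the envelope is continuous, psh, and a subsolution. To see it is exactly a solution, I would replace $u$ on small balls by the solution of the Dirichlet problem with continuous boundary data (a balayage step), constructed on the small ball using smooth approximation and part (2)'s smooth theory. Maximality then forces equality.

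Fifth, deduce uniqueness from the comparison principle
\[
\int_{\{u<v\}}(\sqrt{-1}\partial\bar\partial v)^n\le \int_{\{u<v\}}(\sqrt{-1}\partial\bar\partial u)^n .
\]
Applying it to $v+\delta(|z|^2-1)$ gives $v\le u$, and then symmetry gives equality.

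\textbf{Part (2).} I would run the continuity method on the family
\[
\det(u_{j\bar k})=t+(1-t)\det(\underline u_{j\bar k}), \qquad t\in[0,1],
\]
with boundary data $\varphi$. Here $\underline u=\tilde\varphi+A(|z|^2-1)$ is a strictly psh smooth subsolution, with $\tilde\varphi$ a smooth extension of $\varphi$. Openness comes from invertibility of the linearized operator, which is the Laplacian of $\omega_u$ with Dirichlet conditions. Closedness needs uniform $C^{2,\alpha}(\overline B)$ bounds. I would obtain them in order:
\begin{enumerate}
\item a $C^0$ bound from $\underline u\le u\le$ the harmonic extension of $\varphi$;
\item a gradient bound, via the maximum principle, from the boundary barriers;
\item a global $C^2$ bound reduced to the boundary by Yau/Aubin's computation for $\Delta u$;
\item $C^{2,\alpha}$ bounds from Evans--Krylov in the interior and Krylov's boundary version.
\end{enumerate}
Higher regularity then follows by Schauder bootstrapping.

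\textbf{Main obstacle.} The hard step is the boundary $C^2$ estimate. Tangential--tangential derivatives come from the boundary data. Tangential--normal derivatives come from barrier arguments applied to $T(u-\underline u)$, where $T$ is a tangential holomorphic-type vector field, using the subsolution $\underline u$. The double normal derivative $u_{n\bar n}$ is the real difficulty. Expanding the determinant gives $u_{n\bar n}\cdot\det(u_{\alpha\bar\beta})_{\alpha,\beta<n}$ plus controlled terms. So I must bound the tangential minor $\det(u_{\alpha\bar\beta})_{\alpha,\beta<n}$ from below, uniformly on $\partial B$.

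I would attempt this CKNS-style. Minimize the smallest eigenvalue of the tangential complex Hessian over $\partial B$. At the minimizing point, use strong pseudoconvexity of $\partial B$ together with a barrier argument to show that this eigenvalue is bounded below by a positive constant. This is where the strict pseudoconvexity of the ball is essential.
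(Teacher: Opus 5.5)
Your outline is correct. For part (2) it follows the strategy the paper sketches after CKNS:
\begin{itemize}
\item the same continuity path $\det(u_{j\bar k})=t+(1-t)\det(\underline u_{j\bar k})$;
\item openness from the linear Dirichlet problem for $\Delta_{\omega_u}$;
\item a $C^2$ bound from $L(\Delta u)\ge 0$, reduced to $\partial B$, where the normal contributions are handled by barriers.
\end{itemize}

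The one real difference is the third-order step. The paper switches to the Calabi--Aubin--Yau identity $L(|T|^2_{\omega_u})=|\nabla_{\omega_u}T|^2_{\omega_u}$. That is a maximum-principle argument, so on its own it only bounds third derivatives in the interior, or in terms of their uncontrolled boundary values. Your route uses Evans--Krylov together with Krylov's boundary $C^{2,\alpha}$ estimate, via concavity of $\det(u_{j\bar k})^{1/n}$ in the real Hessian and uniform ellipticity from the $C^2$ bound. This gives directly the $C^{2,\alpha}(\overline B)$ bound that closes the continuity method, and it is also what CKNS do.

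For part (1) the paper only cites Bedford--Taylor. Your envelope/balayage/comparison argument is the standard proof. Calling on (2) in the balayage step is not circular, since the continuity method does not use (1).

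Do add the final step the paper makes explicit. The smooth solution from the continuity method is a continuous psh weak solution with boundary values $\varphi$, so by uniqueness it equals the $u$ of part (1). That identification is exactly what (2) asserts.

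Two small repairs are needed in part (1).

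First, the lower barrier as you describe it does not work. For merely continuous $\varphi$ the harmonic extension has complex Hessian blowing up at $\partial B$. So ``harmonic extension $+A(|z|^2-1)$ minus a constant'' is not psh for any $A$. Use instead
\[
v_{\xi,\varepsilon}(z)=\varphi(\xi)-\varepsilon+(|z|^2-1)-M_\varepsilon\bigl(1-\mathrm{Re}\langle z,\xi\rangle\bigr),\qquad \xi\in\partial B.
\]
This satisfies $\det\bigl((v_{\xi,\varepsilon})_{j\bar k}\bigr)=1$, because $\mathrm{Re}\langle z,\xi\rangle$ is pluriharmonic. On $\partial B$ we have $1-\mathrm{Re}\langle z,\xi\rangle=\frac{1}{2}|z-\xi|^2$, so $v_{\xi,\varepsilon}$ lies below $\varphi$ on $\partial B$ once $M_\varepsilon$ is large in terms of the modulus of continuity of $\varphi$.

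Second, your steps are out of logical order. Walsh's argument needs that the envelope is itself a competitor, and that the maximum of two competitors is a competitor. Both are consequences of the Bedford--Taylor theory and Choquet's lemma, so those must come before your Step 2.
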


Please refer to the solution of Exercise 1 in Section \ref{sec:solutions} for a discussion of weakly psh functions and weak solutions of \eqref{CMA}. Here, we would first like to emphasize the following point: whereas results such as Theorem \ref{thm:dir} are familiar from the standard Dirichlet problem for linear elliptic PDEs such as the Laplace equation, such linear PDEs enjoy the following additional interior regularity property:
\begin{equation*}
    \varphi \in C^0(\partial B)\implies u\in C^\infty_{\rm loc}(B).
\end{equation*}
This is fundamentally false for Equation \eqref{CMA}:

\begin{example}\label{blocki} Blocki \cite{blocki1999regularity} pointed out the example $u(z_1,z_2) = (1+|z_1|^2)|z_2|$ on $\overline B \subset \mathbb{C}^2$. Your {\bf first exercise} is to check that $u$ solves \eqref{CMA} weakly and to check the following geometric interpretation of this example: $u$ is the pullback of the potential of a flat Kähler metric on $\mathbb{C}^2/\{\pm \textrm{Id}\}$ to the crepant blowup of this singularity.
\end{example}

\begin{example} From Mooney \cite{Mooney}, convex weak solutions $\tilde{u}: B_1(0) \subset \mathbb{R}^n \to \mathbb{R}$ to the real Monge-Amp\`ere equation $\det(D^2\tilde{u}) = 1$ can be singular at worst along a subset of Hausdorff codimension $1$, but for $n \geq 3$ this is actually sharp. Thus, for $n \geq 3$ we obtain singular weak solutions to \eqref{CMA} on $B \subset \mathbb{C}^n$ with codimension $1$ singular set via $u(z) = \tilde{u}({\rm Re}(z))$. Note that, here, the real codimension of the singular set is $1$, while in Example \ref{blocki} the complex codimension of the singular set is $1$.
\end{example}

In contrast to the real case, it seems to be unknown whether a weak solution to Equation \eqref{CMA} as in Theorem \ref{thm:dir}(1) has any smooth points at all.

\medskip

\noindent \textbf{Question.} How did \cite{CKNS} prove that $\varphi\in C^\infty(\partial B)$ implies $u\in C^\infty(\overline B)$?

\medskip

\noindent Their approach was to use the {continuity method} to construct a smooth solution $u$, then invoke the uniqueness of solutions. Specifically, extend $\varphi$ to a psh function $\overline \varphi$ on $\overline B$, e.g. by taking any extension $\hat \varphi\in C^\infty(\overline B)$ and setting $\overline \varphi(z) = \hat \varphi(z) +\lambda (|z|^2-1)$ for $\lambda\gg1$.
Then consider the family of PDEs
\begin{equation*}
\det\left(\dfrac{\partial^2u_t}{\partial z_j\partial \bar z_k}\right) = (1-t)\det\left(\dfrac{\partial^2\overline\varphi}{\partial z_j\partial \bar z_k}\right)+t\;\text{on }B,\quad u_t\;\text{psh on }B, \quad u_t = \varphi\;\text{on } \partial B 
\end{equation*}
for $t \in [0,1]$. This is trivially solvable for $t = 0$. Openness of the set of $t$-values for which a solution exists is an easy consequence of the Dirichlet problem for linear elliptic PDEs. So, as usual, the key to being able to solve all the way up to $t = 1$ is to establish \textit{a priori} estimates of the solutions $u_t$. Pretend that the right-hand side is identically equal to $1$. The following is a sketch of some elements of the \emph{a priori} estimates. Your \textbf{second exercise} is to check these for hidden traps ((i)--(v)).

\subsection{Classical \texorpdfstring{$C^2$}{C^2} estimates}

For notational simplicity, we write 
\begin{equation*}
    \frac{\partial u}{\partial z_j} = u_j, \qquad\frac{\partial u}{\partial\bar z_k} = u_{\bar k}.
\end{equation*}
We write $ u^{\bar jk}$ to denote the $(j,k)$ entry of the inverse matrix of
\begin{equation*}
    (u_{j\bar k}) = \left(\dfrac{\partial^2u}{\partial z_j\partial \bar z_k}\right).
\end{equation*}
Furthermore, we sum over repeated indices as usual.

We now want to understand the classical $C^2$ estimates in more detail. Note that we can write \eqref{CMA} in the following equivalent form:
\begin{equation*}
    \log\det\left(\dfrac{\partial^2u}{\partial z_j\partial \bar z_k}\right)=0.
\end{equation*}
If we take the $\ell$ derivative in the formula above, we get
\begin{equation*}
    u^{\overline jk}u_{k\overline j\ell} = 0.
\end{equation*}
Thus, taking the $\bar m$ derivative, we get 
\begin{align*}
    0
    &= -u^{\bar j p}u_{p\bar q\bar m}u^{\bar q k}u_{k\bar j\ell}+u^{\bar jk}u_{k\bar j \ell \bar m}\\
    &= -u^{\bar j p}u_{p\bar q\bar m}u^{\bar q k}u_{k\bar j\ell}+L(u_{\ell\bar m}).    
\end{align*}
Here and below, $L$ denotes the linearized Monge-Amp\`ere operator
\begin{equation*}
    L(f) = u^{\bar jk}f_{k\bar j}.
\end{equation*}
It is not difficult to see that this operator is a real, second-order elliptic operator (in fact, it is the Laplace-Beltrami operator $\Delta_{\omega_u}$ of the K\"ahler metric $\omega_u$), and hence it satisfies the maximum principle. Rearranging, we see that
\begin{equation}\label{eq:C2:est}
    L(u_{\ell\bar m}) = u^{\bar j p}u_{p\bar q \bar m}u^{\bar q k}u_{k\bar j\ell }.
\end{equation} 
We can rotate coordinates to make $ u_{j\bar k} = \lambda_j \delta_{j\bar k} $ for some $\lambda_j\in \mathbb{R}^+$ (i), thus giving
\begin{equation*}
    L(u_{\ell \bar m}) = \sum_{j,k=1}^n \frac1{\lambda_j} \frac1{\lambda_k} u_{j\bar k\bar m}u_{\bar j k\ell}.
\end{equation*}
If $\ell = m$, then this is nonnegative because $u_{j \bar k \bar \ell} u_{\bar j k \ell} = |u_{j \bar k \bar \ell} |^2$. Thus, summing over $\ell=m$, we have $L(\Delta u)\geq 0$, where $\Delta = \Delta_{\omega_{\rm Euc}}$ is the standard Euclidean Laplacian. Via the maximum principle,  $\Delta u$ attains its maximum on the boundary $\partial B$. Since $\varphi \in C^\infty(\partial B)$, the tangential contributions to $\Delta u$ on $\partial B$ are under control.

Unlike the case of linear elliptic PDEs, the normal contributions to $\Delta u$ on $\partial B$ cannot be computed directly from the PDE, but they can be controlled using barrier functions and computations similar to the above. This is, in fact, one of the main innovations of \cite{CKNS} over Aubin \cite{Aubin} and Yau \cite{Yau}, but we shall not discuss this.

The upshot is that we have an \emph{a priori} estimate 
\begin{equation*}
    \sup_{\overline B}\ \max \left\{\lambda_1, \frac{1}{\lambda_1}, \ldots, \lambda_n, \frac{1}{\lambda_n}\right\} \leq C(\varphi),
\end{equation*}
where $\lambda_j$ are the eigenvalues of $(u_{j\bar{k}})$ and $C(\varphi)$ is independent of $t$.

\medskip

\begin{remark}\label{rem:C^2sing}
    Despite some nice progress in special cases, it is still a fundamental open problem to make these arguments work in a singular reference geometry. Precisely, split $u = u_{\textrm{Ref}} + u_{\textrm{Unk}}$, where $\omega_{\textrm{Ref}} = \omega_{u_{\textrm{Ref}}}$ is a chosen reference K\"ahler metric and $u_{\textrm{Unk}}$ is the unknown part of the solution. Now aiming to control $\Delta_{\omega_{\textrm{Ref}}} u = n + \Delta_{\omega_{\mathrm{Ref}}} u_{\mathrm{Unk}}$ instead of the Euclidean Laplacian $\Delta u$, work in normal coordinates with respect to $\omega_{\textrm{Ref}}$. Then $L(\Delta u)$ differs from $L(\Delta_{\omega_{\textrm{Ref}}} u)$ by 
\begin{equation*}
    L(u_{\mathrm{Ref}}^{\bar{m}\ell})u_{\ell\bar{m}} = u^{\bar{j}k}(u_{\mathrm{Ref}}^{\bar{m}\ell})_{k\bar{j}} u_{\ell\bar{m}} = -\sum_{j,\ell=1}^n \frac{\lambda_\ell}{\lambda_j}u_{\mathrm{Ref},\ell\bar{\ell}j\bar{j}}.
\end{equation*}
    The coefficients appearing in this expression are the sectional curvatures of $\omega_{\mathrm{Ref}}$, which are typically unbounded on a singular space. For two situations where these terms can be dealt with, see \cite{GuenanciaPaun, Tosatti2010} and references therein. For 
    singularities such as the cones in Section \ref{sec:cones}, no workaround of this kind is known.
\end{remark}

\subsection{Classical \texorpdfstring{$C^3$}{C^3} estimates} 

Deviating slightly from \cite{CKNS}, we are now going to follow the classical Calabi-Aubin-Yau $C^3$ computation \cite{Calabi, Aubin, Yau} as rewritten by Phong-\v Se\v sum-Sturm \cite{PhongSesumSturm}. The basic idea is to interpret the $C^2$ computation above as a kind of Bochner formula. When working with Bochner type formulas, to get an estimate of order $k+1$ one typically applies $L$ to the positive term $f$ in the estimate of order $k$. This is almost what we do here, except that we contract with respect to $\omega_u$ rather than $\omega_{\rm Euc}$:
\begin{equation*}
    f = u^{\bar m\ell}\, u^{\bar j p}u_{p\bar q\bar m}u^{\bar q k}u_{k\bar j\ell}.
\end{equation*}
Before computing $L(f)$, we rewrite $f$ a bit more geometrically via the Levi-Civita connection of $\omega_u$ by introducing the Christoffel symbols
\begin{equation*}
    \frac{\partial g_{j\bar k}}{\partial z_\ell} = \Gamma_{j\ell}^r g_{r\bar k}.
\end{equation*}
Hence, we have, after contraction,
\begin{align*}
    f &= u^{\bar m\ell}u^{\bar jp}\overline{\Gamma_{qm}^ru_{r\bar p}} u^{\bar qk}\Gamma^s_{k\ell}u_{s\bar j}\\
    &= u^{\bar m\ell}\overline{\Gamma^j_{qm}}u^{\bar qk}\Gamma^s_{k\ell}u_{s\bar j}\\
    &= |T|^2_{\omega_u},    
\end{align*}
where the tensor $T$ (ii) is given by 
\begin{equation*}
    T = (\nabla^{\rm LC}_{\omega_u} -\nabla^{\rm LC}_{\omega_{\mathrm{Euc}}})|_{T^{1,0}(\mathbb{C}^n)}.
\end{equation*} 
Hence, applying the operator $L$, we find (iii)
\begin{equation*}
    L(f) = \Delta_{\omega_u}|T|^2_{\omega_u} = |\nabla_{\omega_u} T|_{\omega_u}^2 +\langle\Delta_{\omega_u}T,T\rangle_{\omega_u}.
\end{equation*}
If we use holomorphic normal coordinates for $\omega_u$ at a point, we get
\begin{equation*}
    (\Delta_{\omega_u}T)^\ell_{jk} = \Delta_{\omega_u}(T^\ell_{jk}) - T\left(\Delta_{\omega_u}\left(\dfrac{\partial}{\partial z_j}\right), \frac{\partial}{\partial z_k}, dz^\ell\right)-\cdots .
\end{equation*}
Because $\omega_u$ is Ricci-flat, the second and subsequent terms vanish by the Bochner formulas for holomorphic vector fields and $1$-forms (iv). We are then left with 
\begin{align*}
    (\Delta_{\omega_u}T)^\ell_{jk} &= (u^{\bar m\ell}u_{j\bar m k}- (\Gamma_{\rm Euc})^\ell_{jk})_{a\bar a}\\
    &= u_{j\bar \ell k a\bar a} - \frac{\partial }{\partial z^a}({\rm Rm}_{\mathrm{Euc}})^\ell_{j\bar ak}.
\end{align*}
Clearly ${\rm Rm}_{\mathrm{Euc}} = 0$. Now, by the $C^2$ estimates \eqref{eq:C2:est}, we see (v)
\begin{align*}
    (\Delta_{\omega_u} T)^\ell_{jk} &= (u^{\bar ab}u_{k\bar \ell b\bar a})_j\\
    &= (u^{\bar rp}u_{p\bar q \bar \ell}u^{\bar qs}u_{s\bar r k})_j = 0.
\end{align*}
Combining everything together, we see that
\begin{equation}\label{eq:C3:est}
L(f) = \Delta_{\omega_u} |T|_{\omega_u}^2 = |\nabla_{\omega_u} T|_{\omega_u}^2,
\end{equation}
i.e., we again have the subharmonicity expected of a Bochner type quantity.

\begin{remark}
    Again replacing $\omega_{\mathrm{Euc}}$ by a possibly singular reference metric $\omega_{\mathrm{Ref}}$ as in Remark \ref{rem:C^2sing}, we see that, now, the first derivative of the curvature of $\omega_{\mathrm{Ref}}$ enters the estimate, as opposed to merely the curvature itself. On a singular space this clearly makes matters worse. On the other hand, the $C^3$ estimate is slightly more linear in nature than the $C^2$ estimate. The moral of the rest of this course will be to show that, even in some quite singular reference geometries, this increased linearity wins.
\end{remark}

\subsection{Yau's Liouville Theorem}\label{sec:YauLTOld}

As a consequence of the $C^2$ and $C^3$ estimate computations for the complex Monge-Amp\`ere equation, we obtain the following result, due to Riebesehl-Schulz \cite{RS}.

\begin{theorem}[Yau's Liouville Theorem\label{cor:YauLT}]
Let $\omega$ be a K\"ahler metric on $\mathbb{C}^n$. Then 
\begin{equation*} 
\left\{\begin{array}{cc}
     \mathrm{(I)} & \omega^n =\omega_{\mathrm{Euc}}^n\\
     \mathrm{(II)} &\dfrac1C\omega_{\mathrm{Euc}}\leq \omega\leq C\omega_{\mathrm{Euc}}
\end{array}\right\} \implies \exists A\in \mathrm{SL}(n, \mathbb{C}): \omega=A^*\omega_{\mathrm{Euc}}.
\end{equation*}
\end{theorem}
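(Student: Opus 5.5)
Since $\mathbb{C}^n$ is contractible, the closed real $(1,1)$-form $\omega$ has a global potential: by the $\partial\bar\partial$-lemma on $\mathbb{C}^n$ we can write $\omega = \sqrt{-1}\partial\bar\partial u$ for a smooth psh function $u$. Condition (I) then says $\det(u_{j\bar k}) = 1$ (up to the harmless constant normalization of $\omega_{\rm Euc}^n$), so $u$ solves \eqref{CMA} on all of $\mathbb{C}^n$. Condition (II) says the eigenvalues $\lambda_j$ of $(u_{j\bar k})$ satisfy $C^{-1}\le\lambda_j\le C$ everywhere. In particular $L=\Delta_{\omega_u}$ is uniformly elliptic with respect to the Euclidean structure, and $\omega_u$ and $\omega_{\rm Euc}$ are uniformly equivalent. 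The goal is to show that $T=\nabla^{\rm LC}_{\omega_u}-\nabla^{\rm LC}_{\omega_{\rm Euc}}$ vanishes identically. Then $\partial_\ell u_{j\bar k}=0$, so $(u_{j\bar k})$ is a constant positive Hermitian matrix $H$ with $\det H=1$. Writing $H=A^*A$ with $A\in \mathrm{SL}(n,\mathbb{C})$ gives $\omega=A^*\omega_{\rm Euc}$.

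The key input is \eqref{eq:C3:est}: $f=|T|^2_{\omega_u}\ge 0$ satisfies $Lf=|\nabla_{\omega_u}T|^2\ge0$. We therefore want to show that a nonnegative subsolution of a uniformly elliptic operator, of the right size, must vanish. We do this by a scaling argument.
\begin{itemize}
\item[(a)] Using the local $C^3$ estimate (Calabi's estimate), we show $f$ is bounded on each ball by a constant depending only on $C$ and the radius. Concretely, consider $\phi=f\,\eta$ for a cutoff $\eta$ on $B_{2R}(x)$, or use the standard trick of applying $L$ to $f+K\operatorname{tr}_{\omega_{\rm Euc}}\omega_u$, exploiting \eqref{eq:C2:est}. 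The latter gives $L(\Delta u)\ge c\,f$ by (II), since in diagonal coordinates $L(\Delta u)=\sum\lambda_j^{-1}\lambda_k^{-1}|u_{j\bar k\bar\ell}|^2$, which is comparable to $f$. This yields $\sup_{B_R(x)} f\le C'/R^2$.
\item[(b)] Both hypotheses are invariant under the rescaling $u_R(z)=R^{-2}u(Rz)$. The scale-invariant quantity $R^2 f$ transforms correctly, so letting $R\to\infty$ in $\sup_{B_R(x)}f\le C'R^{-2}$ forces $f(x)=0$ for every $x$.
\end{itemize}

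The main obstacle is step (a), i.e.\ making the interior $C^3$ bound quantitative with the sharp $R^{-2}$ scaling. We will compute $L$ of $Q=\eta^2 f+K\eta'\,\Delta u$, with $\eta$ a cutoff satisfying $|\nabla\eta|\lesssim R^{-1}$ and $|\partial\bar\partial\eta|\lesssim R^{-2}$, and look at an interior maximum. Bad terms of the form $\eta\nabla\eta * T*\nabla T$ are absorbed by Cauchy--Schwarz into $\eta^2|\nabla T|^2$ plus $|\nabla\eta|^2 f$. The latter is dominated by the good term $K\,L(\Delta u)\gtrsim K f$ once $K$ is large, with $\Delta u$ bounded by (II). Tracking the powers of $R$ gives $\sup_{B_R}f\le C'R^{-2}$, and step (b) then concludes the proof. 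If this direct computation proves messy, a fallback is Krylov--Safonov or Moser iteration for the subsolution $f$ of the uniformly elliptic $L$. This fallback needs an integral bound on $f$ over $B_{2R}$ of order $R^{2n-2}$, which follows by integrating $L(\Delta u)\gtrsim f$ against a cutoff.
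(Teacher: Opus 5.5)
Your proposal is correct and is essentially the paper's proof: apply $\Delta_\omega$ to a cutoff times $|T|^2_\omega$ plus a multiple of $\mathrm{Tr}_{\omega_{\rm Euc}}(\omega)$, use \eqref{eq:C3:est} for the first term and \eqref{eq:C2:est} with (II) for the good term $|T|^2_\omega/C$, absorb the cutoff errors, and conclude with the maximum principle and $R\to\infty$. Put no cutoff on the trace term, which is already globally bounded by (II), and give it coefficient $\hat C/R^2$ as the paper does (or work at unit scale and rescale, as in your step (b)); a fixed large $K$ at scale $R$ only bounds $f$ and gives no $R^{-2}$ decay.
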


\begin{example}[LeBrun \cite{lebrun}]
If we drop Condition (II), the result need not hold. One can show that, on $\mathbb{C}^2$, if we consider the Taub-NUT metric $\omega_u =\sqrt{-1}\partial\bar \partial u$,
\begin{equation*} 
u(z_1,z_2) = (a^2+a^4)+(b^2+b^4), \;\,\text{where}\;\; \left\{ 
\begin{array}{c}|z_1|=ae^{a^2-b^2}\\ |z_2|=be^{b^2-a^2}\end{array}\right\}, 
\end{equation*}
then (I) holds but $\omega_u$ is not flat. However, it is complete, of rapid curvature decay at infinity, and is thus a \emph{gravitational instanton}. Your {\bf third exercise} is to check this.
\end{example}

\begin{example}[Calabi, personal communication, 2013]
Calabi observed a very different kind of counterexample to Corollary \ref{cor:YauLT} without condition (II). There exists a domain $V \subset \mathbb{C}^2$, $V \neq \mathbb{C}^2$, together with a biholomorphism $F: \mathbb{C}^2 \to V$ such that $\det DF = 1$; see \cite{RoRu} for a modern discussion. The counterexample is then $\omega = F^*(\omega_{\mathrm{Euc}}|_V)$, and is flat but incomplete. Domains of this kind are called Fatou-Bieberbach domains and the known examples have fractal boundaries. The fact that $\det DF = 1$ is possible for a Fatou-Bieberbach map $F$ was known to Bieberbach \cite{Bieber}.  
\end{example}

\begin{remark}
    For the real Monge-Amp\`ere equation, examples such as the above do not occur: an entire convex solution $\tilde{u}: \mathbb{R}^n \to \mathbb{R}$ to $\det(D^2\tilde{u})=1$ is automatically a quadratic polynomial. This was first proved by Jörgens \cite{Jorgens} for $n = 2$, by Calabi \cite{Calabi} for $n = 3,4,5$ and by Pogorelov \cite{Pogo} for $n \geq 6$. Amusingly, for $n = 2$ the statement is equivalent to Bernstein's theorem: a function $f: \mathbb{R}^2 \to \mathbb{R}$ such that ${\rm graph}(f)$ is a minimal surface in $\mathbb{R}^3$ is affine linear. While Bernstein's theorem holds for minimal graphs over $\mathbb{R}^n$ for $3 \leq n \leq 7$ as well, it fails for $n \geq 8$.
\end{remark}

\begin{proof}[Proof of Theorem \ref{cor:YauLT}]
This is again a standard argument in the context of Bochner type formulas. Denote $\omega_{\mathrm{Euc}}$ by $\tilde\omega$ for notational simplicity. Choose a cutoff function $\chi=\chi(|z|)\in C^\infty(\mathbb{R})$ as in Figure \ref{cutoff} such that $\chi(|z|)\equiv 1$ for $|z|\leq R$, $\chi(|z|)\equiv 0$ outside a neighborhood of $|z|\leq 2R$, and $\chi(|z|)\geq 0$ for $R<|z|<2R$, as shown below.
    \begin{figure}[b]
        \sidecaption
        \includegraphics[scale=.5]{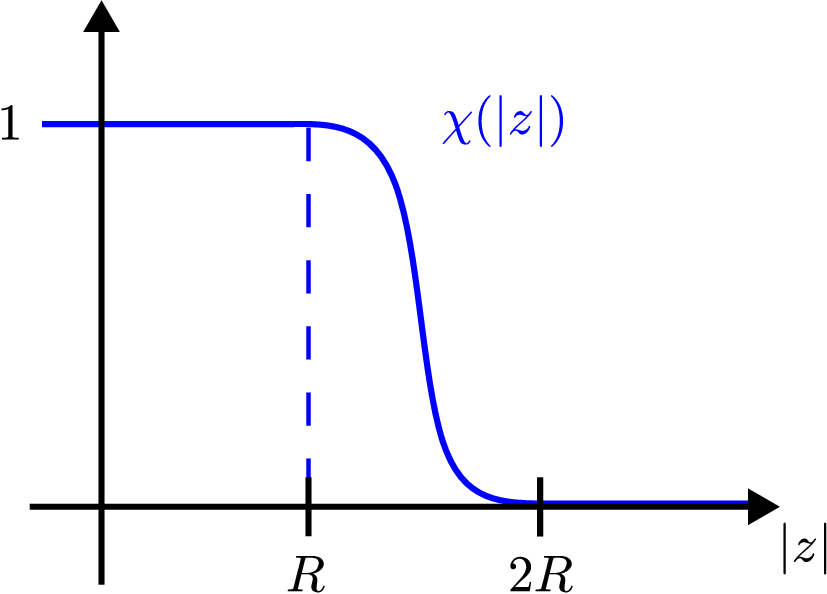}
        \caption{The cutoff function $\chi(|z|)$.}
        \label{cutoff}
    \end{figure}
Treat $C$ as a generic constant. Choose some $\hat C\gg C$ (which shall become clear later in the proof), and  consider applying the operator $\Delta_{\omega}$ as follows:
    \begin{align*}
    \Delta_{\omega}\Bigg(\chi^2 |T|^2_\omega + \dfrac{\hat C}{R^2} \mathrm{Tr}_{\tilde\omega}(\omega)\Bigg)
    &=
    \left(\Delta_\omega  \chi^2\right) |T|^2_\omega
    +2\left\langle\nabla_\omega  \chi^2, \nabla_\omega |T|^2_\omega\right\rangle_\omega\\
    &+\chi^2 \Delta_\omega |T|^2_\omega+\frac{\hat C}{R^2}\Bigg( \Delta_{\omega}\mathrm{Tr}_{\tilde\omega}(\omega)\Bigg).    
    \end{align*}
Our goal is to show that this quantity is nonnegative. To this end, we estimate each of the four terms on the right-hand side, respectively, as follows:
\begin{enumerate}
    \item By Condition (II) we have $\Delta_\omega \chi^2\geq -C/R^2$.
    \item Applying Young's inequality and Condition (II) we have 
    \begin{equation*} 
    2\left\langle\nabla_\omega  \chi^2, \nabla_\omega  |T|^2_\omega\right\rangle_\omega \geq -\frac C{R^2} |T|^2_\omega -\frac{\chi^2}{100}|\nabla_\omega  |T|_\omega|^2_\omega.
    \end{equation*}
    \item By the $C^3$ estimate \eqref{eq:C3:est} we have $\Delta_\omega  |T|^2_\omega=|\nabla_\omega  T|^2_\omega$.
    \item By the $C^2$ estimate \eqref{eq:C2:est} and Condition (II) we have 
    $\Delta_\omega \mathrm{Tr}_{\tilde\omega}(\omega)\geq |T|_\omega^2/C$.
\end{enumerate}

\noindent Putting everything together, we see 
\begin{equation*}
\Delta_{\omega}\left( \chi^2|T|^2_\omega + \frac{\hat C}{R^2}\mathrm{Tr}_{\tilde\omega}(\omega)\right) \geq 
\left(\frac{\hat C}{R^2 C}-\frac{C}{R^2}\right) |T|_\omega^2 +\frac{99}{100}\chi^2 |\nabla_\omega  T|^2_\omega.
\end{equation*}
Note that if $\hat C\gg C$ then the above quantity is non-negative, as required. Thus, we have, by applying the maximum principle, that:
\begin{equation*} 
\sup_{B_R(0)} |T|^2_\omega \leq \frac{\hat C}{R^2}\sup_{\partial B_{2R}(0)}\mathrm{Tr}_{\tilde\omega}(\omega)\leq \frac{\hat{C}^2}{R^2}. 
\end{equation*}
Taking $R\to \infty$ gives $T = \nabla_{\mathrm{Euc}}\omega=0$, which completes the proof. 
\end{proof}

\section{From Liouville to Evans-Krylov, with applications to cylinders}

We now reverse the logic of the previous section. We begin by giving a self-contained proof of the Liouville Theorem without using the $C^3$ estimate computations. From this, a $C^{2,\alpha}$ estimate can be proved by contradiction. This constitutes a new proof of the Evans-Krylov estimate, first observed in \cite{HeinTosatti2020}, which can be generalized to certain singular spaces to which the classical proofs do not seem to apply.

\subsection{A geometric proof of Yau's Liouville Theorem}\label{sec:YauLT}

This is due to Li-Li-Zhang \cite{lilizhang} after some similar but more complicated arguments in \cite{Hein}, which prove less. In particular, the heat kernel estimates developed in \cite{Hein} can be packaged much more efficiently in the form of Peter Li's Theorem \ref{lithm} below.

Let $\xi\in\mathbb{C}^n$, $\xi \neq 0$, and view this as a constant $(1,0)$-vector field on $\mathbb{C}^n$. Hence, 
by the Bochner formula for vector fields, we have
\begin{equation*} 
\Delta_\omega|\xi|^2_\omega =-\langle (\overline{\partial}^*_\omega \overline{\partial} + {\rm Ric}_\omega)\xi, \xi\rangle_\omega + |\nabla_\omega\xi|_\omega^2 \geq 0. 
\end{equation*}
This is in fact an equivalent way of writing the $C^2$ estimate calculation \eqref{eq:C2:est}, so we are not eliminating this calculation from the proof of the Liouville theorem. However, assuming this, the arguments below will truly bypass the $C^3$ estimate calculation.
Note that $|\xi|_\omega^2\in L^\infty(\mathbb{C}^n)$ by Condition (II). We shall require the following result:

\begin{theorem}[Li \cite{li}]\label{lithm}
For all complete, non-compact Riemannian manifolds $(M,g)$ with $\mathrm{Ric}_g\geq 0$ and for all smooth functions $f\in L^\infty(M)$ with $\Delta_g f\geq 0$, we have 
\begin{equation*} 
\sup_{M} f =\lim_{R\to \infty}\frac{1}{\mathscr{A}_g(\partial B_R^g(p))}\int_{\partial B_R^g(p)}f\, d\mathscr{A}_g 
\end{equation*}
for any fixed $p\in M$.
\end{theorem}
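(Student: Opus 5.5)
The plan is to go through the heat semigroup and Gaussian heat kernel bounds: first show that heat averages, and hence ball averages, of $f$ converge to $\sup_M f$, and only then pass to spherical averages. Put $m=\sup_M f<\infty$ and $g=m-f\geq 0$, so that $\Delta_g g\le 0$. The statement to prove is
\[
\lim_{R\to\infty}\frac{1}{\mathscr{A}_g(\partial B_R^g(p))}\int_{\partial B_R^g(p)} g\,d\mathscr{A}_g=0 .
\]

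\emph{Step 1: the heat flow of $f$ converges to $m$.} Let $H(x,y,t)$ be the minimal heat kernel and $u(x,t)=\int_M H(x,y,t)f(y)\,dy$. Since $\mathrm{Ric}_g\ge 0$, $M$ is stochastically complete (Yau), so $\int_M H(x,y,t)\,dy=1$ and therefore $u\le m$. Because $\Delta_g f\ge 0$ and $f$ is bounded, $\partial_t u=\int_M H\,\Delta_g f\ge 0$; the integration by parts is justified with cutoffs and the Li--Yau gradient bounds on $H$. Thus $u(x,t)\ge f(x)$, and $u(x,t)$ increases to a limit $c(x)\le m$. By the parabolic Li--Yau Harnack inequality applied to the bounded solution $u+\sup|f|\geq 0$,
\[
u(x,t)\le u(y,t+s)\Big(\tfrac{t+s}{t}\Big)^{n/2}e^{d(x,y)^2/4s}.
\]
Letting $t\to\infty$ with $s=t$ and $x,y$ fixed shows that $c$ is independent of $x$, up to a factor $2^{n/2}$ that must be removed. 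The sharper route is to take $s=\varepsilon t$, let $t\to\infty$ with $d(x,y)$ fixed, and then let $\varepsilon\to 0$. This gives $c(x)\le c(y)$, and by symmetry $c\equiv c$. Since $f\le u\le c$ everywhere, we get $c\ge m$, and hence $c=m$.

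\emph{Step 2: ball averages of $g$ tend to $0$.} From Step 1, $\int_M H(p,y,t)g(y)\,dy=m-u(p,t)\to 0$. The Li--Yau lower Gaussian bound $H(p,y,t)\ge C^{-1}V(p,\sqrt t)^{-1}e^{-Cd(p,y)^2/t}$ then gives, for $R=\sqrt t$,
\[
\frac{1}{V(p,R)}\int_{B_R(p)} g\;\le\; C\int_M H(p,y,R^2)\,g(y)\,dy\;\longrightarrow\;0 .
\]
The same estimate holds for annuli $B_{2R}\setminus B_R$, because volume doubling under $\mathrm{Ric}_g\ge0$ makes their volume comparable to $V(p,R)$.

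\emph{Step 3: from balls to spheres. This is the main obstacle.} By coarea, Step 2 only forces the spherical averages $a(r)$ of $g$ to be small for most $r$, and we need a genuine limit. First, $\int_{\partial B_r}\partial_r g=\int_{B_r}\Delta_g g\le 0$; this holds in the distributional sense, since the distance function is only Lipschitz, and can be handled with Calabi's trick or with smoothed distance functions. Second, Laplacian comparison $\Delta_g r\le (n-1)/r$ controls the mean curvature term in $\frac{d}{dr}\big(\int_{\partial B_r} g\big)$, and Bishop--Gromov monotonicity controls $\mathscr{A}(r)'/\mathscr{A}(r)$. Combining the two should yield an almost-monotonicity of the form
\[
\frac{d}{dr}\Big(r^{-(n-1)}\int_{\partial B_r} g\Big)\le 0 .
\]
If this holds, $a(r)$ is bounded above by a weighted average of $a$ over $[R/2,R]$, and that weighted average tends to $0$ by Step 2. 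If the almost-monotonicity fails, the fallback is a weak Harnack or mean-value argument for the nonnegative supersolution $g$ on annuli, using the doubling and Poincaré inequalities available under $\mathrm{Ric}_g\ge0$, to transfer smallness from annuli to the spheres they contain.
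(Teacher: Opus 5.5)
The notes quote this theorem from Li without proof, so I am judging your argument on its own. Steps 1 and 2 are sound and give the ball-average version. Stochastic completeness, monotonicity of $u$ in $t$, and the Li--Yau Harnack inequality with $s=\varepsilon t$ force $c\equiv m$, and the Gaussian lower bound turns this into $V(p,R)^{-1}\int_{B_R(p)}g\to 0$.

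The almost-monotonicity you hope for in Step 3 is also correct. Take $0\le\phi\in C^\infty_c(0,\infty)$ and $\Phi(s)=\int_s^\infty\phi$. Test $\Delta r\le (n-1)/r$ against $g\,\phi(r)$, test $\Delta g\le 0$ against $\Phi(r)$, and apply coarea. With $I(R)=\int_{\partial B_R(p)}g\,d\mathscr{A}$ this gives $(R^{1-n}I(R))'\le 0$ in $\mathcal{D}'(0,\infty)$. Hence $I(R)\le 2^nR^{-1}\int_{B_R\setminus B_{R/2}}g$.

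\textbf{The gap is the step after that.} Monotonicity controls $I(R)$, not the average $a(R)=I(R)/\mathscr{A}(R)$. It only gives $a(R)\le 2^{n-1}\frac{\mathscr{A}(s)}{\mathscr{A}(R)}\,a(s)$ for $s\in[R/2,R]$. So bounding $a(R)$ by a bounded-weight average of $a$ over $[R/2,R]$, or by $V(R)^{-1}\int_{B_R}g$, requires a \emph{lower} bound $\mathscr{A}(R)\ge c\,V(R)/R$. Bishop--Gromov gives only the reverse inequality $\mathscr{A}(R)\le nV(R)/R$, since it bounds $\mathscr{A}'/\mathscr{A}$ from above. The needed inequality does not follow from $\mathrm{Ric}\ge 0$ alone: on the round sphere, $\mathscr{A}(R)\to 0$ as $R\to\pi$ while $V(R)$ stays bounded below. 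This is exactly where non-compactness has to be used, and your argument never uses it.

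Your fallback does not close the gap either. The weak Harnack inequality bounds the infimum of a nonnegative supersolution from \emph{below} by its $L^p$-average. That is the wrong direction for showing that $g$ is small on a given sphere, which is a null set.

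\textbf{The missing ingredient is an elementary lemma.} Let $\gamma$ be a ray from $p$, let $x=\gamma(3R)$, and fix $0<\epsilon\le R$.
\begin{itemize}
\item Every minimal geodesic from $x$ to a point of $B_R(p)$ must pass through the shell $\{R\le r<R+\epsilon\}$ before it first enters $B_R(p)$. Since $r$ is $1$-Lipschitz and drops from $3R$ to $R$ along it, it spends time at least $\epsilon$ in that shell, and does so at distance $>2R-\epsilon\ge R$ from $x$.
\item On the other hand, $B_R(p)$ lies within distance $4R$ of $x$.
\item In polar coordinates about $x$, $J(t,\theta)/t^{n-1}$ is nonincreasing, so the Jacobian on the part of each geodesic inside $B_R(p)$ is at most $4^{n-1}$ times the Jacobian on its shell-crossing part.
\end{itemize}
Integrating over directions gives $V(R+\epsilon)-V(R)\ge 4^{-n}\epsilon\,V(R)/R$, hence $\mathscr{A}(R)\ge 4^{-n}V(R)/R$ for a.e.\ $R$. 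Combined with your bound on $I(R)$, this yields $a(R)\le 8^nV(R)^{-1}\int_{B_R(p)}g\to 0$ by Step 2. That completes the proof, at least along a.e.\ $R$, which is the sense in which the distributional argument controls spherical integrals.
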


For us, this theorem is applicable because $(\mathbb{C}^n,\omega)$ is complete by Condition (II) and Hopf-Rinow. The left-hand side defines a norm $\|\cdot\|$ on $\mathbb{C}^n$:
\begin{equation*}
\|\xi\|^2=\sup_{\mathbb{C}^n}|\xi|^2_\omega.
\end{equation*}  
The triangle inequality follows from Cauchy-Schwarz.
Then, by Theorem \ref{lithm},
\begin{equation*} 
\|\xi\|^2=\lim_{R\to \infty}\frac{1}{\mathscr{A}_\omega(\partial B_R^\omega(0))}\int_{\partial B_R^\omega(0)}|\xi|^2_\omega\, d\mathscr{A}_\omega . 
\end{equation*}
In particular, $\|\cdot\|$ satisfies the parallelogram law, hence
$\|\xi\|^2=\omega_\infty(\xi, \overline{\xi})$ for some constant K\"ahler form $\omega_\infty$ on $\mathbb{C}^n$. Now, clearly, $\omega\leq \omega_\infty$ on $\mathbb{C}^n$, so it is enough to show that $\omega^n=\omega_\infty^n$. This will follow if we can show that for any $\delta>0$ there exists a $q\in \mathbb{C}^n$ such that for all $\xi\in S :=\{e_j\pm e_k, e_j\pm \sqrt{-1}e_k: j,k=1,2,\dots, n\}$,
\begin{equation*} 
|\xi|^2_{\omega(q)}> (1-\delta)|\xi|^2_{\omega_\infty}.
\end{equation*}
Indeed, by polarization, this estimate will imply that the two quadratic forms are $O(\delta)$ close at $q$, so by continuity of the determinant the volume forms are $O(\delta)$ close at $q$; but both of these volume forms are constant, and hence are then $O(\delta)$ close everywhere. It remains to prove the above claim (which would be trivial if $q$ was allowed to depend on $\xi$ because $|\xi|_{\omega_\infty}^2 = \|\xi\|^2$ is the sup over $\mathbb{C}^n$ of $|\xi|_\omega^2$).

We can prove the claim as follows. For any $R > 0$ and $\xi \in S$ write 
$$Q_R := \partial B_R^\omega(0), \quad Q_{R,\xi} := \left\{q\in Q_R: |\xi|_{\omega(q)}^2>(1-\delta)\|\xi\|^2\right\}.$$
Then let $\delta>0$. By Theorem \ref{lithm} we have for all $R\gg1$ and for all $\xi\in S$ that
\begin{equation*}
\frac{1}{\mathscr{A}_\omega(Q_R)}\int_{Q_R}|\xi|_\omega^2\, d\mathscr{A}_\omega
> \left(1-\frac\delta{|S|}\right)\|\xi\|^2.
\end{equation*} 
It follows from this that $Q_{R,\xi}$ must occupy more than $1 - \frac{1}{|S|}$ times the total area of $Q_R$ because otherwise
\begin{align*}\int_{Q_R} |\xi|_\omega^2 \,d\mathscr{A}_\omega &\leq \|\xi\|^2 \mathscr{A}_\omega(Q_{R,\xi}) + (1-\delta)\|\xi\|^2 \mathscr{A}_\omega(Q_R \setminus Q_{R,\xi})\\
&= \delta \|\xi\|^2 \mathscr{A}_\omega(Q_{R,\xi}) + (1-\delta)\|\xi\|^2\mathscr{A}_\omega(Q_R) \\
&\leq \left(\delta\left(1 - \frac{1}{|S|}\right) + (1-\delta)\right)\|\xi\|^2 \mathscr{A}_\omega(Q_R),\end{align*}
which contradicts the previous inequality. But, if $\mathscr{A}_\omega(Q_{R,\xi}) > (1 - \frac{1}{|S|})\mathscr{A}_\omega(Q_R)$ for all $\xi \in S$, then an easy inclusion-exclusion argument shows that $\bigcap_{\xi \in S} Q_{R,\xi}$ has positive area, and hence must be nonempty.

\subsection{From Liouville to the Evans-Krylov \texorpdfstring{$C^{2,\alpha}$}{C^{2,\alpha}} estimate}\label{sec:fromLTtoEK}

We now use Yau's Liouville Theorem, which we have re-proved from first principles above, to deduce the Evans-Krylov $C^{2,\alpha}$ estimate for Equation \eqref{CMA}, which is a slight weakening of the classical $C^3$ estimate, by blowup and contradiction.

\begin{theorem}[Evans-Krylov $C^{2,\alpha}$ estimate]\label{thm:EKclass}
For all $n\in \mathbb{N}$ and $\alpha\in(0,1)$, and for all $C<\infty$, there exists a $C' = C'(n, \alpha, C)<\infty$ such that the following holds. If $\omega$ is a K\"ahler metric on $B_3(0)\subset \mathbb{C}^n$ obeying
\begin{equation*} \frac1C\omega_{\mathrm{Euc}}\leq \omega \leq C\omega_{\mathrm{Euc}}, \quad \omega^n =\omega^n_{\mathrm{Euc}},
\end{equation*}
then we have
\begin{equation*} 
|\omega(x)-\omega(y)|_{\omega_{\rm Euc}}\leq C'|x-y|^\alpha, \quad \forall x,y\in B_1(0). 
\end{equation*}
\end{theorem}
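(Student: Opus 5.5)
We argue by contradiction and blow-up, in the spirit of Simon, with Yau's Liouville Theorem \ref{cor:YauLT} supplying the rigidity. Suppose the estimate fails for some $n,\alpha,C$. Then there are Kähler metrics $\omega_i$ on $B_3(0)$ satisfying the hypotheses for which the Hölder seminorm on $B_1(0)$ is unbounded. To have a clean quantity to maximise, we use a weighted seminorm that degenerates at the boundary. Let $d_x = \mathrm{dist}(x,\partial B_2(0))$ and
\begin{equation*}
K_i := \sup_{x\neq y \in B_2(0)} \min\{d_x,d_y\}^{\alpha}\,\frac{|\omega_i(x)-\omega_i(y)|_{\omega_{\rm Euc}}}{|x-y|^\alpha}.
\end{equation*}
Each $\omega_i$ is smooth, so $K_i<\infty$, and by assumption $K_i\to\infty$. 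Choose points $x_i,y_i$ that almost realise the supremum, and set $\lambda_i := |x_i-y_i|$. From the bound $|\omega_i|\le C$ we get $K_i\le 2C\,\min\{d_{x_i},d_{y_i}\}^\alpha\lambda_i^{-\alpha}$, so $\lambda_i/\min\{d_{x_i},d_{y_i}\}\to 0$. This is the standard point-picking step.

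Next we rescale. Define $\hat\omega_i := \Phi_i^*\omega_i$ with $\Phi_i(z) = x_i+\lambda_i z$, and divide by $\lambda_i^2$ so that $\hat\omega_i$ is again a metric with coefficients $\omega_i(x_i+\lambda_i z)$. The complex Monge-Ampère equation and the bounds $C^{-1}\omega_{\rm Euc}\le\hat\omega_i\le C\omega_{\rm Euc}$ are scale invariant, so both persist. The domains $\Phi_i^{-1}(B_2)$ exhaust $\mathbb{C}^n$, because $d_{x_i}/\lambda_i\to\infty$. The near-maximality of the weighted quantity, together with the fact that $d$ is essentially constant at scale $\lambda_i$ near $x_i$, gives a uniform bound $[\hat\omega_i]_{C^\alpha(B_R(0))}\le 2$ for each fixed $R$ and $i\ge i(R)$. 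After normalising by $\varepsilon_i := K_i\lambda_i^\alpha/d_{x_i}^\alpha$, we also get $|\hat\omega_i(0)-\hat\omega_i(e_i)|\ge \varepsilon_i/2$ with $|e_i|=1$.

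Here lies the main obstacle: this normalisation only gives the oscillation $\varepsilon_i\le 2C$ in absolute terms, and we need it bounded below. Two routes are available. One is to note that $K_i\to\infty$ forces $\varepsilon_i$ to stay bounded below along a subsequence after re-choosing the scale so that the oscillation across unit distance is comparable to the seminorm. The other is to rescale so that $[\hat\omega_i]_{C^\alpha}$ is normalised to $1$ and consider $\hat\omega_i - \hat\omega_i(0)$; in this second version, however, the Liouville theorem applies only to the unnormalised metric. We would go with the first route. Since $\omega_i$ is bounded by $C$, the quantity $K_i\to\infty$ can only arise from small scales, and a simpler unweighted picking at the scale where the oscillation is of order one suffices. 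Concretely, we choose the largest $r$ at which $\mathrm{osc}_{B_r(x)}\omega_i \ge \eta\, r^\alpha K_i$; an iteration over dyadic scales locates a scale at which the oscillation is comparable to a fixed $\eta_0>0$.

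Finally we take the limit. The rescaled metrics are uniformly $C^\alpha_{\rm loc}$, so Arzelà-Ascoli gives $C^{\alpha'}_{\rm loc}$ convergence, for $\alpha'<\alpha$, to a continuous closed positive $(1,1)$-form $\omega_\infty$ on $\mathbb{C}^n$ with $C^{-1}\omega_{\rm Euc}\le\omega_\infty\le C\omega_{\rm Euc}$. The form $\omega_\infty$ solves $\omega_\infty^n=\omega_{\rm Euc}^n$ weakly, by continuity of the Monge-Ampère operator under uniform convergence of potentials (Bedford-Taylor). At the same time it has nonzero oscillation $|\omega_\infty(0)-\omega_\infty(e)|\ge\eta_0/2$ for some unit vector $e$. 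To apply Theorem \ref{cor:YauLT} we need smoothness of $\omega_\infty$. This is a second delicate point, since $C^\alpha$ solutions are not automatically smooth a priori. We would handle it either by using that $C^{1,1}$-type bounds together with $C^\alpha$ continuity of $\omega$ allow one to run linear Schauder theory on the differentiated equation, giving $C^{2,\alpha}\Rightarrow C^\infty$, or by checking that the proof of the Liouville theorem in Section \ref{sec:YauLT} (Li's theorem and the Bochner inequality) survives for $C^{\alpha}$ solutions via approximation. Theorem \ref{cor:YauLT} then forces $\omega_\infty$ to be constant, which contradicts the nonzero oscillation.
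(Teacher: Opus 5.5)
Your proposal has a genuine gap at exactly the step you flag as the main obstacle, and your chosen ``first route'' does not close it. That route uses only $|\omega_i|\le C$ and $K_i\to\infty$, and these alone cannot produce a scale with oscillation of order one. Take
$\omega_i=\sqrt{-1}\partial\bar\partial\bigl(|z|^2+\delta_iL_i^{-2}\cos(L_i\,\mathrm{Re}\,z_1)\bigr)$, whose only nonconstant coefficient is $1-\frac{\delta_i}{4}\cos(L_i\,\mathrm{Re}\,z_1)$, with $\delta_i\to0$ and $\delta_iL_i^\alpha\to\infty$.
These forms are closed, K\"ahler and uniformly equivalent to $\omega_{\rm Euc}$, and their $C^\alpha$ seminorm blows up. Yet their oscillation on every ball is $O(\delta_i)\to0$. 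They are not Monge-Amp\`ere solutions, and that is the point. In your scheme, $\varepsilon_i\to0$ cannot be excluded without using the equation before passing to the limit. If $\varepsilon_i\to0$, your blow-up limit is constant and Yau's theorem gives no contradiction. Your weighted point-picking is fine; it plays the role of the paper's $\mu(\omega,x)$.

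The paper's remedy has two parts.

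\textbf{Choice of scale.} Dilate so that the rescaled H\"older quotient at the base point equals $1$, i.e.\ scale by the $1/\alpha$-th power of that quotient rather than by $|x_i-y_i|$. This is a pure change of spatial variable: nothing is subtracted or divided. So the equation, the bounds $C^{-1}\omega_{\rm Euc}\le\hat\omega_i\le C\omega_{\rm Euc}$, and Yau's theorem all apply to the limit of $\hat\omega_i$ itself, and your objection to the ``second route'' never arises. The degeneracy now reappears as the possibility that the maximizing partner $\hat y_i$ collapses onto $\hat x_i$.

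\textbf{Uniform estimates from the equation.} This collapse is excluded by uniform $C^k_{\rm loc}$ bounds along the sequence. Obtain local potentials from the $\partial\bar\partial$-lemma with $L^\infty$ estimates. Schauder theory for $\Delta_{\rm Euc}$ then gives uniform $C^{2,\alpha}$ bounds on the potentials. Next apply Schauder to the differentiated equation $\Delta_{\hat\omega_i}(Du)=0$, whose coefficients are uniformly $C^\alpha$, and bootstrap. The uniform $C^1$ bound gives
$|\hat x_i-\hat y_i|^\alpha=|\hat\omega_i(\hat x_i)-\hat\omega_i(\hat y_i)|\le C|\hat x_i-\hat y_i|$.
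Hence $|\hat x_i-\hat y_i|$ is bounded below because $\alpha<1$, and bounded above because $|\hat\omega_i|\le C$. The smooth limit is therefore nonconstant.

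This same uniform estimate also disposes of your ``second delicate point'', so your two obstacles are really one. Bedford-Taylor is unnecessary, since locally uniform convergence already gives $\omega_\infty^n=\omega_{\rm Euc}^n$ pointwise.

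Alternatively, you could keep your scaling and treat the case $\varepsilon_i\to0$ by linearization. The forms $(\hat\omega_i-\hat\omega_i(0))/\varepsilon_i$ subconverge to a closed $(1,1)$-form of growth $O(|z|^\alpha)$ that is trace-free with respect to a constant K\"ahler form. It is therefore $\sqrt{-1}\partial\bar\partial$ of a harmonic function, hence identically zero. This contradicts its having size at least $1/2$ at some unit vector. That route, too, uses the equation.
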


\begin{figure}[b]
    \sidecaption
    \includegraphics[scale=.475]{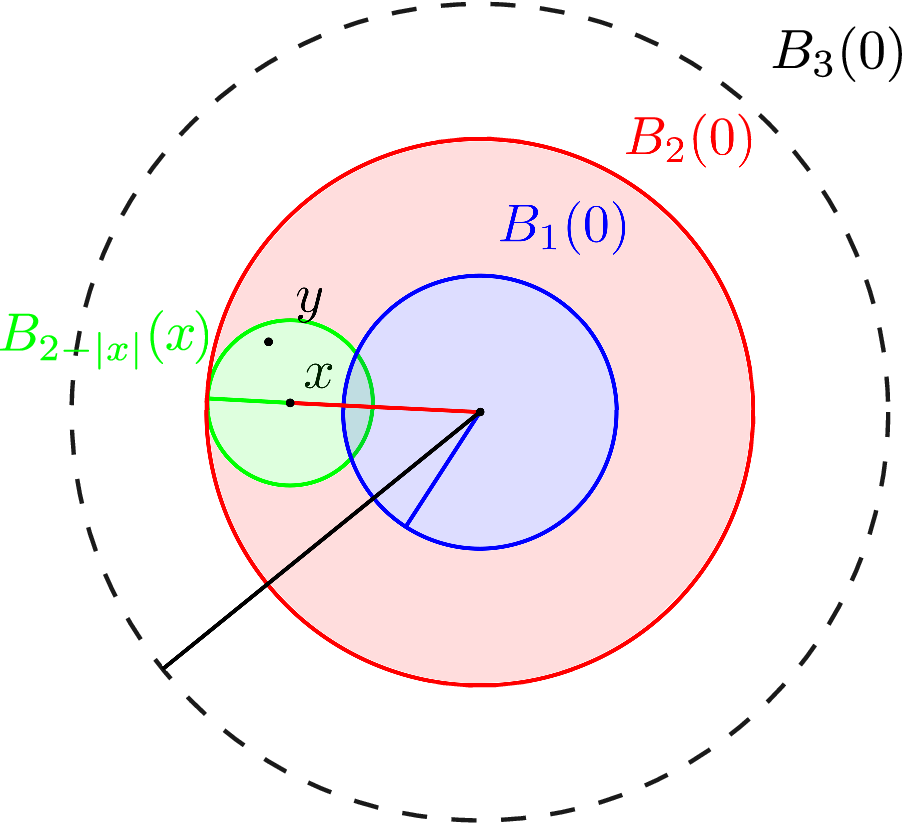}
    \caption{Schematic for the proof of the Evans-Krylov $C^{2,\alpha}$ estimate.}
    \label{ballsest}
\end{figure}

\begin{proof}
We follow \cite{HeinTosatti2020}, which is inspired by Chen-Wang \cite{chen2014c2alphaestimatemongeampereequationsholdercontinuous} and, in turn, by Simon \cite{Simon}. Figure \ref{ballsest} will help keep track of where the estimates are being applied.

It is enough to prove that
\begin{equation*}
    \sup_{x\in B_2(0)}\mu(\omega,x)\leq C',
\end{equation*}
where we define
\begin{align*}
    \mu(\omega, x) &= 
    (2-|x|)^\alpha \sup_{\substack{x\neq y\in B_{2-|x|}(x)}}\frac{|\omega(x)-\omega(y)|_{\omega_{\rm Euc}}}{|x-y|^\alpha}\\
    &= (2-|x|)^\alpha \lambda(\omega,x)^\alpha.
\end{align*}
Note, the case $x,y\in B_1(0)$ with $|x-y|\geq 2-|x|$ is trivial by examining the H\"older difference quotient directly. Now, suppose by way of contradiction that for all $i \in \mathbb{N}$ there exists some $\omega_i$ on $B_3(0)$ such that $$\omega_i^n=\omega_{\mathrm{Euc}}^n, \quad
\frac1C\omega_{\mathrm{Euc}}\leq \omega_i\leq C \omega_{\mathrm{Euc}},$$
and some $x_i\in B_2(0)$ such that 
\begin{equation*}
\mu(\omega_i,x_i)\geq \frac12 \sup_{x\in B_2(0)} \mu(\omega_i, x)
\end{equation*}
and such that $\mu(\omega_i,x_i), \lambda(\omega_i,x_i)$ go to $ \infty$ as $i \to \infty$. Let $y_i\in \overline{B_{2-|x_i|}(x_i)}$ realize the supremum in the definition of $\lambda_i = \lambda(\omega_i,x_i)$. A Taylor expansion of $\omega_i$ about $x_i$ immediately shows that $y_i \neq x_i$. 

We now pull back all objects (to the ``hat'' picture) under the map 
\begin{equation*}
\Phi_i: \mathbb{C}^n \to \mathbb{C}^n, \quad z\mapsto \frac{1}{\lambda_i} z.
\end{equation*} 
For the metrics, we also need to rescale appropriately; thus, we have in these cases: 
\begin{equation*}
\hat \omega_{\mathrm{Euc}}=\lambda^2_i\Phi_i^*(\omega_{\mathrm{Euc}})=\omega_{\mathrm{Euc}}, \quad \hat \omega_i = \lambda^2_i \Phi_i^*(\omega_i).
\end{equation*} 
On the ball $B_{3\lambda_i}(0)$ we still have the two properties
\begin{equation*} 
    \frac 1C  \hat\omega_{\mathrm{Euc}} \leq \hat \omega_i\leq C \hat \omega_{\rm Euc}, \quad \hat \omega_i^n = \hat\omega^n_{\mathrm{Euc}}. 
\end{equation*}
Letting $\hat x_i = \Phi_i^{-1}(x_i) \in B_{2\lambda_i}(0)$ and $\hat y_i = \Phi_i^{-1}(y_i) \in \overline{B_{2\lambda_i-|\hat x_i|}(\hat x_i)}\setminus\{\hat x_i\}$, we have 
\begin{equation}\label{eq:contra}
    \sup_{\hat x_i\neq \hat y\in B_{2\lambda_i -|\hat x_i|}(\hat x_i)}\frac{|\hat\omega_i(\hat x_i)-\hat \omega_i(\hat y)|_{\hat{\omega}_{\rm Euc}}}{|\hat x_i-\hat{y}|^\alpha} = 1, 
\end{equation}
and $\hat{y}_i$ realizes the supremum, by construction. Since the numerator of the difference quotient is bounded, we must have $|\hat{x}_i - \hat{y}_i| \leq C$. Moreover, 
\begin{equation*} 
    \mu(\omega_i, x_i) = (2\lambda_i-|\hat x_i|)^\alpha \sup_{\hat x_i\neq \hat y\in B_{2\lambda_i-|\hat x_i|}(\hat x_i)} \frac{|\hat \omega_i(\hat x_i)-\hat \omega_i(\hat y)|_{\hat\omega_{\rm Euc}}}{|\hat x_i-\hat y|^\alpha} 
\end{equation*}
Now, as $i\to \infty$, the left-hand side blows up, while on the right-hand side the $\sup$ is equal to $1$ by construction. Hence, it must be the case that $2\lambda_i-|\hat x_i|\to \infty$. That is, passing to a pointed limit centred at $\hat x_i$, the boundary of the green ball moves away to infinity, but $\hat y_i$ does not. We now prove the following claim:

\medskip

\noindent \textbf{Claim 1.} For all $R$, there exists $C_R$ independent of $i$, such that for all $i$ we have 
\begin{equation*} 
 \frac{|\hat\omega_i(\hat x)-\hat \omega_i(\hat y)|_{\hat\omega_{\rm Euc}}}{|\hat x-\hat y|^\alpha} \leq C_R, \quad \forall \hat x \neq \hat y\in B _R(\hat x_i). 
\end{equation*}

\begin{proof}[Proof of Claim 1]
    For any $\hat{x} \in B_{2\lambda_i}(0)$ we have that 
\begin{equation*} 
    \mu(\hat \omega_i, \hat x)\leq \sup_{\underline{\hat x}\in B_{2\lambda_i}(0)}\mu(\hat \omega_i, \underline{\hat x}) \leq 2\mu(\hat\omega_{i}, \hat x_i) = 2(2\lambda_i - |\hat{x}_i|)^\alpha \lambda(\hat\omega_i,\hat{x}_i). 
\end{equation*}
By assumption, $\lambda(\hat\omega_i,\hat{x}_i) = 1$. Thus, by the definition of $\mu$, we have 
\begin{align*}
    \sup_{\hat x\neq \hat y\in B_{2\lambda_i-|\hat x|}(\hat x)} \frac{|\hat \omega_i(\hat x)-\hat \omega_i(\hat y)|}{|\hat x-\hat y|^\alpha}
    \leq 2\left(\frac{2\lambda_i-|\hat x_i|}{ 2\lambda_i-|\hat x|}\right)^\alpha.    
\end{align*} 
If $\hat x, \hat y\in B_R(\hat x_i)$ for a fixed $R$, then 
\begin{equation*} 
    \hat y\in B_{2\lambda_i-|\hat x|}(\hat x), \quad \frac{2\lambda_i-|\hat x_i|}{2\lambda_i-|\hat x|}\leq C_R,
\end{equation*}
for all $i$ large enough depending on $R$. So the claim is proved for those $i$, and for the others we just need to maximize the Hölder norm of finitely many smooth functions over a fixed closed ball (all depending on $R$).
\end{proof} 

To finish, we require the following $C^k$ estimates of the metrics $\omega_i$: 

\medskip

\noindent \textbf{Claim 2.} For all $R,k$ there exists a constant $C_{R,k}<\infty$ such that for all $i$,
\begin{equation*}
\|\hat \omega_i\|_{C^k(B_R(\hat x_i))}\leq C_{R,k}. 
\end{equation*}

\begin{proof}[Proof of Claim 2]
The $\partial\overline{\partial}$-lemma with estimates says that there exists $C_R<\infty$ such that for $\omega$ a smooth, closed real (1,1)-form on $B_{2R}(\hat x_i)$, we have the existence of 
\begin{equation*}
u_{i,R}\in C^\infty(B_{2R}(\hat x_i)), \qquad \sqrt{-1}\partial\bar \partial u_{i,R}=\hat \omega_i\quad \text{on }B_{2R}(\hat x_i).
\end{equation*}
Furthermore, the smooth function $u_{i,R}$ obeys the following $L^\infty$-bound: 
\begin{equation*}
\|u_{i,R}\|_{L^\infty(B_R(\hat x_i))}\leq C_R\|\hat\omega_i\|_{L^\infty(B_{2R}(\hat x_i))} \leq C_R.
\end{equation*}
Tracing the definition of $u_{i,R}$ with respect to $\hat\omega_{\rm Euc}$ yields that 
\begin{equation*}
\Delta_{\hat\omega_{\mathrm{Euc}}} u_{i,R}=\mathrm{Tr}_{\hat\omega_{\mathrm{Euc}}}(\hat \omega_i).
\end{equation*}
By Claim 1, the right-hand side is bounded in $C^{0, \alpha}(B_R(\hat x_i))$ with bound independent of $i$. Then Schauder theory of the operator $\Delta_{\hat\omega_{\rm Euc}}$ gives 
\begin{equation}\label{eq:aux:est} 
\|u_{i,R}\|_{C^{2,\alpha}(B_R(\hat x_i))}\leq C_R. 
\end{equation}
Differentiating the Monge-Ampère equation yields the following: 
\begin{equation*} 
(\sqrt{-1}\partial\bar \partial u_{i,R})^n =\hat \omega_{\mathrm{Euc}}^n  \implies  \Delta_{\hat\omega_i} (Du_{i,R})=0
\end{equation*}
for any directional derivative $D$ in $\mathbb{C}^n$. The coefficients of the operator $\Delta_{\hat\omega_i}$ are uniformly bounded in $C^{0,\alpha}(B_R(\hat x_i))$ by Claim 1. The functions $Du_{i,R}$ are uniformly bounded in $C^{1,\alpha}(B_R(\hat{x}_i))$, so, in particular, in $L^\infty(B_R(\hat{x}_i))$, by Equation \eqref{eq:aux:est}. Thus, by Schauder theory of the operator $\Delta_{\hat\omega_i}$, the functions $Du_{i,R}$ are uniformly bounded in $C^{2,\alpha}(B_R(\hat{x}_i))$. Thus, $\hat{u}_{i,R}$ is itself uniformly bounded in $C^{3,\alpha}(B_R(\hat{x}_i))$ and we have gained one order. Iterating this argument gives the claim.
\end{proof}

After translating $\hat{x}_i$ to the origin, then, by Arzel\`a-Ascoli and diagonalization with respect to $R$, there exists a $C^\infty_{\rm loc}$ sublimit $\hat \omega_\infty$ on $\mathbb{C}^n$ satisfying the assumptions of Yau's Liouville Theorem \ref{cor:YauLT}. Thus, $\hat\omega_\infty$ is a Euclidean metric, that is to say, it is constant. On the other hand, by Equation \eqref{eq:contra} and the definition of $\hat{y}_i$,
\begin{equation}\label{eq:almost:contra}|\hat\omega_i(\hat{x}_i) - \hat\omega_i(\hat{y}_i)| = |\hat{x}_i - \hat{y}_i|^\alpha.\end{equation}
We also know that $0 < |\hat{x}_i - \hat{y}_i| \leq C$. Thus, it remains to establish a uniform positive lower bound on $|\hat{x}_i - \hat{y}_i|$ because then we can pass $\hat{x}_i - \hat{y}_i$ to a (non-zero) limit and conclude from Equation \eqref{eq:almost:contra} that $\hat\omega_\infty$ is not constant. However, due to the case $k = 1$ of Claim 2 above, and because $|\hat{x}_i - \hat{y}_i| \leq C$, we have that
$$|\hat{x}_i - \hat{y}_i|^\alpha = |\hat\omega_i(\hat{x}_i) - \hat\omega_i(\hat{y}_i)| \leq C|\hat{x}_i - \hat{y}_i|,$$
so the desired lower bound of $|\hat{x}_i - \hat{y}_i|$ follows because $\alpha < 1$.
\end{proof}

Your \textbf{fourth exercise} is to prove the $\partial\overline{\partial}$-lemma with estimates.

\subsection{Liouville on Calabi-Yau cylinders}\label{sec:LTonCyl}

Let $(Y,\omega_Y)$ be a compact simply-connected Calabi-Yau manifold, for instance, a K3 surface. We now wish to sketch a proof of the following: 

\begin{theorem}[Hein \cite{Hein}]\label{thm:HJH}
If on $\mathbb{C}^n\times Y$ we have a Calabi-Yau metric $\omega$ satisfying 
\begin{equation*}
    \frac1C\omega_{\rm Prod}\leq \omega \leq C\omega_{\rm Prod},\quad \text{where}\;\,\omega_{\rm Prod}=\omega_{\mathrm{Euc}}+\omega_Y,
\end{equation*} 
and if $[\omega]=[\omega_Y]$ in $H^2(\mathbb{C}^n\times Y, \mathbb{R})=H^2(Y,\mathbb{R})$, then, for some $A \in {\rm GL}(n,\mathbb{C})$,
\begin{equation*}
    \omega =A^*\omega_{\mathrm{Euc}}+\omega_Y.
\end{equation*}
\end{theorem}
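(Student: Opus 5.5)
We follow the strategy of Section \ref{sec:YauLT}, replacing constant vector fields on $\mathbb{C}^n$ by their horizontal lifts to $\mathbb{C}^n\times Y$. Fix $\xi\in\mathbb{C}^n$ and view it as a holomorphic vector field on $M=\mathbb{C}^n\times Y$ that is tangent to the $\mathbb{C}^n$ factor and constant there. Since $\omega$ is Ricci-flat and $\xi$ is holomorphic, the Bochner formula gives $\Delta_\omega|\xi|^2_\omega=|\nabla_\omega\xi|^2_\omega\geq 0$. The uniform equivalence with $\omega_{\rm Prod}$ gives $|\xi|^2_\omega\in L^\infty$ and also makes $(M,\omega)$ complete. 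Because $\mathrm{Ric}_\omega=0$, Li's Theorem \ref{lithm} applies.

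Following the earlier argument, set $\|\xi\|^2=\sup_M|\xi|^2_\omega$. Li's theorem expresses this as a limit of spherical averages, so $\|\cdot\|$ obeys the parallelogram law. Hence $\|\xi\|^2=\omega_\infty(\xi,\bar\xi)$ for a constant Hermitian form $\omega_\infty$ on $\mathbb{C}^n$, and $\omega|_{\mathbb{C}^n\text{-directions}}\leq\omega_\infty$ pointwise. The inclusion-exclusion argument on large spheres $Q_R$ then goes through verbatim, for the finite set $S$ of test vectors. It produces, for every $\delta>0$, a point $q\in M$ at which the restriction of $\omega$ to $T\mathbb{C}^n\subset TM$ is $O(\delta)$-close to $\omega_\infty$.

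\textbf{Main obstacle.} On the cylinder we cannot conclude directly from volume forms: $\omega^{n+m}=c\,\omega_{\rm Prod}^{n+m}$ does not force the horizontal block of $\omega$ to have a fixed determinant. We also still have to identify the $Y$-part and kill the mixed terms. I would try the following.

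1. Write $\omega=\omega_{\rm Prod}+\sqrt{-1}\partial\bar\partial\varphi$, which is possible since $[\omega]=[\omega_Y]$, $H^{1}(Y)=0$, and $\mathbb{C}^n$ is Stein. Normalize the volume constant to $1$ using the cohomology class.

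2. Fiber-integrate to get $\int_{\{z\}\times Y}\omega^{m}\wedge\omega^{k}\ldots$. More directly, the pushforward $\pi_*(\omega^{m+1})$ of $\omega^{m+1}$ to $\mathbb{C}^n$ is $\pi_*(\omega_Y^m)\cdot\omega_{\mathrm{Euc}}$-type terms plus $\sqrt{-1}\partial\bar\partial$ of the fiber average of $\varphi$. This shows that the fiber average $\bar\omega=\pi_*(\omega^{m+1})/\mathrm{vol}(Y)$ is a bounded Kähler form on $\mathbb{C}^n$.

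3. Combine $\omega\leq\omega_\infty$ horizontally with the equality of total volume, $\omega^{n+m}=\omega_{\rm Prod}^{n+m}$. Applying the arithmetic–geometric mean inequality fiberwise to the Schur complement, the horizontal block is then forced to be comparable to $\omega_\infty$ near $q$.

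4. For rigidity, use the fact that $|\xi|^2_\omega$ is subharmonic and attains its supremum in the limit. Pass to translates $q_i$ in the $\mathbb{C}^n$-direction; uniform higher-order estimates follow from Claim 2 in Section \ref{sec:fromLTtoEK}, adapted with the $\partial\bar\partial$-lemma on $B_R\times Y$. The limit metric then has $|\xi|^2$ attaining an interior maximum, so $\nabla\xi=0$ by the strong maximum principle applied to $\Delta|\xi|^2=|\nabla\xi|^2$.

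5. To close the argument, I would show that $\sup|\xi|^2_\omega$ is attained at some point of $M$ itself. The route would be the $\partial\bar\partial$-exactness together with an integration-by-parts identity: $\int_{B_R\times Y}(\omega_\infty-\omega)(\xi,\bar\xi)\,\omega^{n+m}=o(R^{2n})$. Once $|\xi|^2_\omega$ is constant for all $\xi$, it follows that $\nabla_\omega\xi=0$ for every horizontal $\xi$. The de Rham splitting then gives $\omega=A^*\omega_{\rm Euc}+\omega'_Y(z)$ with the horizontal directions $\omega$-orthogonal to $Y$. Each slice $\omega'_Y(z)$ is Ricci-flat in the class $[\omega_Y]$, and is independent of $z$ because the horizontal fields are parallel. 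By uniqueness in Yau's theorem the slice equals $\omega_Y$ (after pulling back by a holomorphic automorphism of $Y$ isotopic to the identity, which is trivial since $Y$ is simply connected and Calabi–Yau). Step 5 is the delicate point I expect to be the real obstacle.
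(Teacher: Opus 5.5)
There is a genuine gap, and it is the one you flag yourself. Your first half is fine: the Li--Li--Zhang argument with constant horizontal fields $\xi$ gives a constant form $\omega_\infty$ with $|\xi|^2_\omega\leq\omega_\infty(\xi,\bar\xi)$ everywhere, and near-equality at one point.

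But this is only an \emph{upper} bound on the horizontal block $g_{HH}$. In the flat case the matching lower bound came from $\omega^n=\omega^n_{\rm Euc}$, and none of your Steps 2--5 replaces it on $\mathbb{C}^n\times Y$:
\begin{enumerate}
\item Step 3 yields only comparability, which uniform equivalence already gives.
\item Step 4 only concerns limits of translates, not $\omega$ itself.
\item The identity in Step 5 carries no new information. Li's theorem already shows that $h=\|\xi\|^2-|\xi|^2_\omega\geq 0$ has averages tending to zero, i.e.\ $\int_{B_R\times Y}h=o(R^{2n})$.
\end{enumerate}
For $n\geq 2$ the manifold $(M,\omega)$ is nonparabolic. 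There, a nonnegative superharmonic function with decaying averages need not vanish; think of $(1+|z|^2)^{1-n}$ for $\omega_{\rm Prod}$. So no argument of this type can show that $\sup|\xi|^2_\omega$ is attained. (For $n=1$ your argument does close, by parabolicity.)

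The missing idea is the dual family of holomorphic $1$-forms $dz_j$. By the Bochner formula with ${\rm Ric}_\omega=0$, the functions $|dz_j|^2_\omega$ are also bounded and subharmonic. Pointwise they are tied to the vector fields by
\[
n=\sum_j dz_j(\partial_{z_j})\leq\Big(\sum_j|dz_j|^2_\omega\Big)^{1/2}\Big(\sum_j|\partial_{z_j}|^2_\omega\Big)^{1/2}.
\]
In your block notation, $\sum_j|dz_j|^2_\omega$ is the trace of the inverse of the Schur complement $g_{HH}-g_{HV}g_{VV}^{-1}g_{VH}$. So the $1$-forms control $\omega$ horizontally from below, mixed terms included, while your $\xi$'s control it from above.

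The real work is to show that both sums have value $n$ at infinity with the \emph{same} normalization. The paper does this in three stages:
\begin{enumerate}
\item Blow down, $\omega_\lambda=\lambda^{-2}\Phi_\lambda^*\omega$, and extract a weak limit. This limit is a Calabi--Yau metric on $\mathbb{C}^n$ uniformly equivalent to $\omega_{\rm Euc}$.
\item Apply Yau's Liouville theorem to identify the limit as $A^*\omega_{\rm Euc}$, and normalize to $A=\mathrm{Id}$.
\item Upgrade weak convergence to pointwise a.e.\ convergence, $|\partial_{z_j}|^2_{\omega_\lambda}\to 1$ and $\sum_j|dz_j|^2_{\omega_\lambda}\to n$. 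This uses subharmonicity, heat-flow smoothing and uniform heat kernel estimates. It is the genuinely hard step, and for the $1$-forms it only works after summing over $j$.
\end{enumerate}
Since the supremum of a bounded subharmonic function equals its value at infinity, both sums are $\leq n$ everywhere. Cauchy--Schwarz then forces equality at every point. This gives $\nabla_\omega\partial_{z_j}=\nabla_\omega dz_j=0$ and $g_{HV}=0$, so $\omega$ splits as an isometric product.

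Your final step then works, except that no automorphism of $Y$ is needed: Calabi's uniqueness in the class $[\omega_Y]$ suffices.
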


The main idea here is to consider 
\begin{align*}
    n &= \sum_{j=1}^n dz_j \left(\frac{\partial}{\partial z_j}\right) \leq \left(\sum_{j=1}^n |dz_j|^2_\omega\right)^{1/2} \left(\sum_{j=1}^n \left|\frac{\partial}{\partial z_j}\right|^2_\omega\right)^{1/2}.    
\end{align*}
Both the two sums in parentheses are in $L^\infty(\mathbb{C}^n \times Y)$ by our assumption of uniform equivalence of $\omega$ and $\omega_{\rm Prod}$. They are also both subharmonic with respect to $\omega$, by the Bochner formulas for holomorphic $1$-forms and vector fields.

\medskip

\noindent \textbf{Goal.} Show that these two terms converge to their Euclidean values at $\infty$. Then the above inequality will read $n \leq \sqrt{n} \cdot \sqrt{n}$. Thus, equality will have to hold at every step, in particular, in the Bochner formulas. Thus $\nabla_\omega^{\rm LC}(dz_j)=0$, $\nabla_\omega^{\rm LC}(\partial/\partial z_j) = 0$, which forces $\omega$ to split as an isometric product.

\medskip

To implement this, we form the tangent cone at $\infty$, that is, for all $\lambda>0$ we let 
\begin{equation*} 
    \omega_\lambda=\lambda^{-2} \Phi_\lambda^*\omega, \qquad \Phi_\lambda(z,y) = (\lambda z,y). 
\end{equation*}
Then $\omega_\lambda$ is uniformly equivalent to $\omega_{\mathrm{Euc}}+\lambda^{-2}\omega_Y$, so we may take a weak limit of currents as $\lambda\to \infty$ to get a Calabi-Yau metric $\omega_\infty$ on $\mathbb{C}^n$ uniformly equivalent to $\omega_{\mathrm{Euc}}$. Thus, by Yau's Liouville Theorem, for some $A \in {\rm GL}(n,\mathbb{C})$,
\begin{equation*} 
\omega_\infty =\text{w-}\lim_{\lambda\to \infty}\omega_\lambda = A^*\omega_{\mathrm{Euc}}.
\end{equation*}
 Without loss, $A=\mathrm{Id}$. The difficulty is to prove that $|\partial/\partial z_j|^2_{\omega_\lambda} \to 1, |dz_j|^2_{\omega_\lambda} \to 1$ pointwise a.e. as $\lambda \to \infty$. Here the subharmonicity of these functions enters. It allows us to obtain pointwise a.e. convergence from the weak convergence $\omega_\lambda \to \omega_\infty$ via a heat flow smoothing and uniform heat kernel estimates. For the functions $|dz_j|^2_{\omega_\lambda}$ this is a little delicate and we only achieve this after summation in $j$.

\begin{remark}
    We have argued by reduction to Yau's Liouville Theorem \ref{cor:YauLT}. In \cite{lilizhang}, Li-Li-Zhang show that similar arguments actually yield a new proof of that theorem, which we have presented in Section \ref{sec:YauLT}. We do not know whether, conversely, Theorem \ref{thm:HJH} can be proved using only classical estimates as in Section \ref{sec:YauLTOld}.
\end{remark}

\subsection{From Liouville to Evans-Krylov on cylinders}\label{sec:fromLTtoKEonCyl}

Let $\overline{\mathscr{Y}}$ be a compact Calabi-Yau manifold, fibered over some compact Kähler space $\overline{B}$ with generic fiber a compact Calabi-Yau manifold $Y$ as in the previous section. We can drop the simply-connected assumption on $Y$ as this adds only minor technicalities to the proof. However, we assume that $\dim Y > 0$ and $\dim \overline{B} > 0$ to avoid trivialities. Fix K\"ahler classes $\mathbf{k}_{\overline{B}}, \mathbf{k}_{\overline{\mathscr{Y}}}$ on $\overline{B},\overline{\mathscr{Y}}$, then introduce the class
\begin{equation*} 
    \mathbf{k}_t = \mathbf{k}_{\overline B}+t^2\mathbf{k}_{\overline{\mathscr{Y}}} 
\end{equation*}
on $\overline{\mathscr{Y}}$, which is K\"ahler for all $t > 0$ and allows us to get a unique Calabi-Yau metric $\omega_t \in \mathbf{k}_t$ by Yau. The goal is to understand $\omega_t$ in the \emph{adiabatic limit} $t \to 0$.

Let $\mathscr{Y}$ be a sufficiently small tubular neighborhood of some regular fiber. Then $\mathscr{Y}$ has an induced regular fibration over a ball $B = B_3(0) \subset \mathbb{C}^n$. Trivialize $\mathscr Y$ as a $C^\infty$-fiber bundle $\mathscr Y\cong B\times Y$. Introduce non-Hermitian reference metrics 
\begin{equation*} 
g_{z,t}=g_{\mathrm{Euc}}+t^2g_{\mathscr{Y}_z}
\end{equation*}
for all $z\in B$ such that $g_{\mathscr{Y}_z}$ is the unique Calabi-Yau metric on $\mathscr{Y}_{z}$, the fiber over $z$, in the class $\mathbf{k}_{\overline{\mathscr{Y}}}|_{\mathscr{Y}_z}$. From Tosatti \cite{Tosatti2010}, there is a uniform constant $C < \infty$ such that 
\begin{equation*} 
\frac1C g_{z,t}\leq g_t \leq Cg_{z,t} 
\end{equation*}
for all $z \in B$ and $0 < t \leq 1$. This plays the role of a $C^2$ estimate in our setting.

\begin{theorem}[Hein-Tosatti \cite{HeinTosatti2020}]\label{thm:HT1}
For all $\alpha\in (0,1)$ there is a constant $C'=C'(\alpha, C)$ such that for all $0 < t \leq 1$ we have that
\begin{equation}\label{eq:HTSemiNorm}
\sup_{\substack{x=(z,y)\in B_{1}(0)\times Y}}\;\sup_{x'\in B^{g_{z,t}}(x,1)}\frac{|g_t (x)-\mathbb{P}^{g_{z,t}}_{x'x}(g_t(x'))|_{g_{z,t}}}{d^{g_{z,t}}(x, x')^\alpha}\leq C',
\end{equation}
where $\mathbb{P}^{g_{z,t}}_{x'x}$ denotes $g_{z,t}$-parallel transport along the minimal $g_{z,t}$-geodesic from $x'$ to $x$, assuming such a geodesic exists and is unique.
\end{theorem}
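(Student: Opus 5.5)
We argue by contradiction and blow-up, mirroring the proof of Theorem \ref{thm:EKclass}, with Theorem \ref{thm:HJH} replacing Yau's Liouville Theorem \ref{cor:YauLT}. Suppose there are $t_i \in (0,1]$ for which the left-hand side of \eqref{eq:HTSemiNorm}, localized by a weight $\mu$ analogous to the one in the proof of Theorem \ref{thm:EKclass}, tends to infinity. The weight is $(2-|z|)^\alpha$ times the $g_{z,t}$-Hölder quotient at $x=(z,y)$, taken over $x'$ within $g_{z,t}$-distance $\min(1,2-|z|)$. Choose near-maximal points $x_i=(z_i,y_i)$ and maximizing partners $x_i'$, and let $\lambda_i=\lambda(g_{t_i},x_i)^{1/\alpha}\to\infty$ be the inverse Hölder scale. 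Rescale the base by $\Phi_i(z,y)=(z_i+z/\lambda_i,y)$ and set $\hat g_i=\lambda_i^2\Phi_i^*g_{t_i}$. By Tosatti's $C^2$ estimate, $\hat g_i$ is uniformly equivalent to $g_{\rm Euc}+(\lambda_i t_i)^2 g_{\mathscr{Y}_{z_i+z/\lambda_i}}$, and the Hölder quotient of $\hat g_i$ at $\hat x_i$ equals $1$ and is attained at $\hat x_i'$ with $0<\hat d(\hat x_i,\hat x_i')\le C$. As in Claim 1, the quotient is bounded by $C_R$ on $\hat d$-balls of radius $R$ about $\hat x_i$, because the base boundary recedes to infinity.

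We then split into cases according to $\varepsilon_i:=\lambda_i t_i$ after passing to a subsequence. In \textbf{Case A}, $\varepsilon_i\to\infty$. The fibers are then huge, so the local geometry is a ball in $\mathbb{C}^{n+m}$ with almost flat fiber metrics. Working in local holomorphic coordinates (the complex structure of the rescaled fibration converges to a product one), we obtain a Ricci-flat Kähler metric with bounded volume form ratio, uniformly equivalent to Euclidean, and with uniform $C^\alpha$ bounds. The higher-derivative bootstrap of Claim 2 applies verbatim: use the local $\partial\bar\partial$-lemma (Exercise 4), Schauder estimates for $\Delta_{\rm Euc}$, and then differentiate the Monge-Ampère equation. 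We get a smooth limit on $\mathbb{C}^{n+m}$, which is flat by Theorem \ref{cor:YauLT}, while the quotient at $(\hat x_i,\hat x_i')$ stays equal to $1$ at distances bounded below by the $C^1$ bound and $\alpha<1$. This is a contradiction. In \textbf{Case B}, $\varepsilon_i\to\varepsilon_\infty\in(0,\infty)$. Rescale the fiber class so that the limit space is $\mathbb{C}^n\times Y$ with $\omega_{\rm Euc}+\varepsilon_\infty^2\omega_{Y}$, where $Y$ is a fixed fiber, since the fibration becomes a product after blow-up of the base. The same bootstrap on the cylinders $B_R\times Y$ yields a smooth Calabi-Yau limit uniformly equivalent to the product metric. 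Its class equals $[\varepsilon_\infty^2\omega_Y]$ because the cohomology class restricted to fibers is fixed and the base class becomes exact. Theorem \ref{thm:HJH} then forces a product, which is parallel for the reference metric, again contradicting the normalized quotient.

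\textbf{Case C}, $\varepsilon_i\to0$, is the main obstacle. Here the fibers collapse, and smooth convergence on the total space fails, since no uniform injectivity radius is available. My first attempt would be to lift to the universal cover of the fibers, or to unwrap the collapsed fibers locally. Alternatively, I would rescale the fiber directions separately by $\varepsilon_i^{-1}$, i.e.\ pull back by $(z,y)\mapsto(z,y)$ while measuring the fiber with $\varepsilon_i^{-2}$. The Hölder quotient in \eqref{eq:HTSemiNorm} is defined through $g_{z,t}$-parallel transport, which respects the product splitting. I would therefore aim to show that in this regime the quotient is controlled by a base contribution and a fiber contribution. The fiber part can be treated on each fiber using the $C^\infty$ estimates in $z$ of Calabi-Yau metrics on the fixed fibers together with the $C^2$ bound. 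The base part, the $\mathscr{Y}$-fiber-averaged components, converges to a Ricci-flat Kähler metric on $\mathbb{C}^n$ to which Theorem \ref{cor:YauLT} applies, or rather the argument of Theorem \ref{thm:HJH} with the heat kernel smoothing of $|\partial_{z_j}|^2$ and $\sum_j|dz_j|^2$. Verifying that the maximizing pair $(\hat x_i,\hat x_i')$ cannot escape into purely fiberwise oscillation at the collapsing scale is the step I expect to require the most work, possibly via a Case-B-type sub-blow-up at scale $t_i$ around $\hat x_i$.
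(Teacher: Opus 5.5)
\textbf{Verdict: genuine gap in Case C.} Your architecture is the one the paper sketches. You blow up at the H\"older scale and split according to $\varepsilon_i=\lambda_it_i$. You contradict Theorem \ref{cor:YauLT} when $\varepsilon_i\to\infty$ and Theorem \ref{thm:HJH} when $\varepsilon_i\to\varepsilon_\infty\in(0,\infty)$. In those two non-collapsed cases the Claim 2 bootstrap runs on holomorphic charts of uniform size. Your arguments there are fine, including the lower bound on $\hat d(\hat x_i,\hat x_i')$ and the cohomology class bookkeeping.

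The gap is Case C, which the paper singles out as the bad case and which carries the real content. You only list options, and two ingredients are missing.

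(a) \emph{Lower bound on the separation.} In the collapsed regime, the charts in which $\hat g_i$ is uniformly Euclidean have size only about $\varepsilon_i$. Rescaling the fiber-scale bootstrap back therefore gives only $|\hat\nabla\hat g_i|\leq C\varepsilon_i^{\alpha-1}$. So the trick $d^\alpha\leq Cd$, which bounded $\hat d(\hat x_i,\hat x_i')$ from below in Theorem \ref{thm:EKclass}, fails. The paper lists Schauder estimates uniform in $0<t\leq 1$ as a main difficulty, and your proposal never addresses this.

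(b) \emph{Base-base dominance.} You must show that the unit quotient at $(\hat x_i,\hat x_i')$ is carried by the base-base components. Only these survive in the limit on $\mathbb{C}^n$, which is flat, so otherwise nothing is contradicted. None of your suggested tools gives this:
\begin{itemize}
\item Universal covers do nothing for simply connected fibers such as K3, which still shrink to points with curvature of order $\varepsilon_i^{-2}$; there is nothing to unwrap.
\item The $C^2$ estimate only gives uniform equivalence.
\item $\omega_t|_{\mathscr{Y}_z}$ is not Ricci-flat, so the smooth dependence of $\omega_{\mathscr{Y}_z}$ on $z$ tells you nothing about it.
\item Rescaling only the fiber directions destroys closedness, so it yields no K\"ahler limit.
\item The heat-kernel smoothing belongs to the proof of Theorem \ref{thm:HJH}; it is not a convergence tool.
\end{itemize}

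One way to get (b) is to exploit the parallel-transport seminorm. You took it as given, but the paper lists choosing it as a difficulty in its own right.
\begin{itemize}
\item At separations of order $\varepsilon_i$, your Claim 1 analogue says that $\hat g_i$ differs from its own $g_{z,t}$-parallel transport across a fiber by $O(\varepsilon_i^\alpha)$.
\item For simply connected $Y$, this together with the fixed fiber class forces the mixed part to be $O(\varepsilon_i^\alpha)$. It also forces the fiber-fiber part to equal $\varepsilon_i^2\omega_{\mathscr{Y}_z}$ up to relative error $O(\varepsilon_i^\alpha)$.
\item Hence at separations $\gg\varepsilon_i$ the quotient is base-base, and the Monge-Amp\`ere equation passes to the base limit.
\end{itemize}

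Separations comparable to $\varepsilon_i$ remain, and your proposed Case-B-type sub-blowup at scale $t_i$ cannot settle them as stated. At that scale all rescaled H\"older quotients are $O(\varepsilon_i^\alpha)$. So the nonlinear limit is automatically parallel, and Theorem \ref{thm:HJH} yields no contradiction. You would have to renormalize by $\varepsilon_i^{-\alpha}$ and use linear theory on $\mathbb{C}^n\times Y$, either a Liouville theorem for the linearized equation or Schauder estimates uniform in $\varepsilon$.
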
 

The proof proceeds via blowups to find a contradiction with Theorem \ref{thm:HJH}. No direct proof is known, and classical $C^3$ estimate computations seem to yield only partial results \cite{TosattiWeinkoveYang}. Modulo Theorem \ref{thm:HJH}, the main difficulties in the proof are:
\begin{enumerate}
    \item Coming up with the appropriate $C^{\alpha}$ seminorm \eqref{eq:HTSemiNorm}.
    \item Getting Schauder estimates that are uniform in $0< t \leq 1$.
    \item Using notation as in Section \ref{sec:fromLTtoEK}, there are now three cases after blowing up: $\lambda_i t_i \to \infty$, $\lambda_i t_i \to c \in (0,\infty)$ and $\lambda_i t_i \to 0$. In the (bad) third case, where the blown-up metrics collapse to the flat metric on $\mathbb{C}^n$, we need to ensure that the nontrivial difference quotient of $\hat \omega_i$ at $\hat{x}_i$ and $\hat{y}_i$ essentially comes from the base-base components of $\hat\omega_i$; else there will be no contradiction in the limit.
\end{enumerate}

Theorem \ref{thm:HT1} can be extended to higher orders to obtain an asymptotic expansion. For $z \in B$ let $A_\mu \in \Omega^{0,1}(\mathscr{Y}_z, T^{1,0}\mathscr{Y}_z)$ denote the \emph{Kodaira-Spencer form} (harmonic with respect to $\omega_{\mathscr{Y}_z}$) describing the variation of complex structure of the fibers in direction $\partial/\partial z_\mu$. The \emph{Weil-Petersson form} $\omega_{\rm WP}$ is a semipositive closed $(1,1)$-form on $B$ whose $\mu\bar{\nu}$-component at a point $z \in B$ is the $L^2$-inner product of $A_\mu$ and $A_{\nu}$ with respect to $\omega_{\mathscr{Y}_z}$. Let $\omega_{\overline B}$ be the unique member of $\mathbf{k}_{\overline B}$ such that ${\rm Ric}_{\omega_{\overline B}} = \omega_{\rm WP}$ weakly and globally on the base. Lastly, let $\omega_{\rm SRF}$ be any \emph{semi-Ricci-flat form} on $\mathscr{Y}$, i.e. a closed real $(1,1)$-form  such that $\omega_{\rm SRF}|_{\mathscr{Y}_z} = \omega_{\mathscr{Y}_z}$ for all $z \in B$.

\begin{theorem}[Hein-Tosatti \cite {HeinTosatti}]\label{thm:HT2} After shrinking $B$ if needed, we have that
\begin{equation*} 
\omega_t = \omega_{\overline{B}}+t^2 \omega_{\mathrm{SRF}}+t^4\sqrt{-1}\partial\bar \partial \left(g_{\overline B}^{\bar \mu\nu}\Delta^{-2}_{\omega_{\mathscr{Y}_z}}\left\langle A_\nu,A_{\bar\mu}\right\rangle_{\omega_{\mathscr{Y}_z}}^\circ \right)+\cdots
\end{equation*}
as $t \to 0$ on $\mathscr{Y}$, where $^\circ$ denotes the average-zero part of a function on $\mathscr{Y}_z$ with respect to the Calabi-Yau metric $\omega_{\mathscr{Y}_z}$. The $\,\cdots$ part may include terms that are only $o(1)$ as $t \to 0$ but are pulled back from the base.
\end{theorem}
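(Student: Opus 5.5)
To prove Theorem \ref{thm:HT2}, I would construct an approximate solution order by order in $t^2$ and then control the error by a uniform linear theory, bootstrapped from the $C^\alpha$ estimate of Theorem \ref{thm:HT1}.

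\textbf{Setting up the equation.} Write $\omega_t = \omega_{\overline B} + t^2\omega_{\rm SRF} + \sqrt{-1}\partial\bar\partial\varphi_t$, which is possible after shrinking $B$, since the class is right locally. The equation becomes the complex Monge-Amp\`ere equation
\[
(\omega_{\overline B}+t^2\omega_{\rm SRF}+\sqrt{-1}\partial\bar\partial\varphi_t)^{m+n} = c_t\, t^{2m}\,\Omega\wedge\overline\Omega,
\]
where $m=\dim Y$.

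\textbf{Expanding the reference form.} Next I expand $(\omega_{\overline B}+t^2\omega_{\rm SRF})^{m+n}$ in powers of $t^2$. The leading term is $\binom{m+n}{n}t^{2m}\omega_{\overline B}^n\wedge\omega_{\rm SRF}^m$. The ratio of $\omega_{\rm SRF}^m\wedge\omega_{\overline B}^n$ to $\Omega\wedge\overline\Omega$ is a function pulled back from the base. Its $\sqrt{-1}\partial\bar\partial\log$ gives the Weil-Petersson form, and this is exactly why the condition ${\rm Ric}_{\omega_{\overline B}}=\omega_{\rm WP}$ makes the zeroth order consistent.

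\textbf{First correction.} At the next order, $t^{2m+2}$, the error contains a fiberwise non-constant term coming from $\omega_{\rm SRF}^{m+1}\wedge\omega_{\overline B}^{n-1}$. Its mixed base-fiber components involve $\langle A_\nu, A_{\bar\mu}\rangle$. I kill the average-zero part of this term by adding the correction $t^4\, g_{\overline B}^{\bar\mu\nu}\Delta^{-2}_{\omega_{\mathscr{Y}_z}}\langle A_\nu,A_{\bar\mu}\rangle^\circ$ to the potential. The fiber Laplacian is scaled by $t^{-2}$ in the metric $t^2\omega_{\mathscr Y_z}$, and this is what converts the $\Delta^{-1}$-type solution into the stated $\Delta^{-2}$ expression; I expect the algebra to be routine via Kodaira-Spencer identities. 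The fiber-average part is absorbed into a base potential, which is the $o(1)$ pulled-back term.

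\textbf{Controlling the remainder.} The main step, and the main obstacle, is to show that the remainder $\psi_t$ really is of lower order in the collapsing metrics $g_{z,t}$. I would linearize at the approximate solution and prove Schauder estimates uniform in $t$ for the operator $\Delta_{\omega_t}$, acting on the fiberwise average-zero part and on the base part separately, in the $t$-adapted H\"older seminorms of \eqref{eq:HTSemiNorm}. I would prove these by blowup and contradiction, with the three regimes $\lambda_i t_i\to\infty$, $\lambda_i t_i\to c$ and $\lambda_i t_i\to 0$ handled by Theorem \ref{thm:HJH} and by Yau's Liouville Theorem \ref{cor:YauLT} respectively. Theorem \ref{thm:HT1} supplies the $C^\alpha$ input needed to start the iteration, and iterating gives higher derivatives.

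The delicate point is that base-pulled-back errors are only $o(1)$, not $O(t^2)$. So the estimate must be set up so that base functions are measured separately, which is why the statement allows such terms in the $\cdots$.
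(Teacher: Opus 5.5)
The gap is in ``Controlling the remainder'', which carries all the weight, and you have organized it in the opposite order from the paper. The route described there (following Hein--Tosatti) is to prove uniform derivative estimates with respect to the collapsing reference metrics \emph{first}, by blowup and contradiction, and to read off the expansion afterwards. The bridge is Lemma \ref{lem:1}, which turns a uniform bound on $\nabla^\ell_{g_B+t^2g_Y}u$ into $|u-u_0|\leq ACt^\ell$. This is the mechanism that produces the powers of $t$, and it is absent from your plan.

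Linearizing at an ansatz does not replace it. On the local tube $\mathscr{Y}$ there are no boundary conditions, and the global problem has singular fibers, so there is no uniformly invertible linear problem. Interior Schauder estimates only bound $\psi_t$ by the error plus a lower-order norm of $\psi_t$ on a larger set, and nothing you say makes that norm small in $t$.

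Moreover, ``iterating gives higher derivatives'' is exactly what fails. Write $G=g_{\overline B}^{\bar\mu\nu}\Delta^{-2}_{\omega_{\mathscr{Y}_z}}\langle A_\nu,A_{\bar\mu}\rangle^\circ$. The fiber--fiber part of $t^4\sqrt{-1}\partial\bar\partial G$ has $g_{z,t}$-norm $\sim t^2$, and its $k$-th fiber derivatives have size $\sim t^{2-k}$. So whenever $G\not\equiv 0$, neither $\omega_t$ nor any ansatz containing this term is uniformly $C^{2,\alpha}$ with respect to $g_{z,t}$. The coefficients of $\Delta_{\omega_t}$ therefore do not have the regularity a Schauder bootstrap requires. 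What is needed instead is an induction: prove estimates for $\omega_t$ minus the terms already identified, use Lemma \ref{lem:1} and the equation to extract the next term, and only then go up one order.

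Second, the Liouville input in your linear blowup argument is misidentified. By Theorem \ref{thm:HT1}, the $C^\alpha$ seminorm of the rescaled metrics scales like $\lambda_i^{-\alpha}$, so the blown-up coefficients automatically converge to a parallel metric. Theorems \ref{thm:HJH} and \ref{cor:YauLT} are statements about Calabi--Yau metrics; they have already been used up inside Theorem \ref{thm:HT1} and say nothing about the limits that arise here. Those limits are harmonic functions of controlled growth on $\mathbb{C}^{n+m}$, on $\mathbb{C}^n\times Y$, or on $\mathbb{C}^n$. What you need are Liouville theorems for such functions; on the cylinder they are pulled back from polynomials on $\mathbb{C}^n$, by expanding in eigenfunctions of $Y$. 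In the collapsed regime you also need a Lemma \ref{lem:1} type argument, so that the fiber-oscillating part cannot carry the difference quotient.

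Finally, your explanation of $\Delta^{-2}$ is wrong. The $t^{-2}$ scaling of the fiber Laplacian only converts the relative $t^2$ error into the coefficient $t^4$. The $t^{2m+2}$ term divided by the leading term is $t^2\,\mathrm{tr}_{\omega_{\overline B}}$ of the \emph{horizontal} component $\omega_{\rm SRF}(v^H_\mu,\overline{v^H_\nu})$, not of the mixed components; here $v^H_\mu$ is the $\omega_{\rm SRF}$-horizontal lift of $\partial/\partial z_\mu$. Schumacher's identity $\Delta_{\omega_{\mathscr{Y}_z}}\omega_{\rm SRF}(v^H_\mu,\overline{v^H_\nu})=-\langle A_\mu,A_{\bar\nu}\rangle^\circ$ (with $\Delta=\mathrm{tr}\,\sqrt{-1}\partial\bar\partial$) shows that this horizontal component already contains one inverse Laplacian. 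Inverting the fiber Laplacian in the linearized equation supplies the second.
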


The proof of this theorem is very technical, although in addition to Theorem \ref{thm:HT1} it only involves linear analysis (mainly, Liouville theorems for harmonic functions on cylinders). The philosophy is to first prove uniform derivative estimates with respect to collapsing reference metrics by blowup and contradiction, and to then derive the existence of an expansion as a corollary. To get a sense of how such expansions are derived, your {\bf fifth exercise} will be to prove the following lemma:

\begin{lemma}\label{lem:1} Let $B,Y$ be smooth manifolds with metrics $g_B, g_Y$, where $Y$ is compact without boundary. Then for all $\ell\in \mathbb{N}$ there is an $A<\infty$, and for all $u\in C^\infty(B\times Y)$ there is a $u_0\in C^\infty(B)$, such that for all $t > 0$ and $C < \infty$:
\begin{equation*}
|\nabla^\ell_{g_{B}+t^2g_{Y}}(u)|_{g_B+t^2g_Y}\leq C \implies |u-u_0|\leq ACt^\ell.
\end{equation*}
\end{lemma}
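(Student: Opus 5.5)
The plan is to take $u_0$ to be the fiberwise average
\begin{equation*}
u_0(z) := \frac{1}{\mathrm{Vol}_{g_Y}(Y)}\int_{\{z\}\times Y} u \, d\mathrm{vol}_{g_Y}.
\end{equation*}
This is smooth on $B$ because $Y$ is compact, and it depends only on $u$, not on $t$ or $C$. With this choice the lemma splits into two steps. The first is a scaling observation that turns the hypothesis into a bound on the $\ell$-th covariant derivative of $u|_{\{z\}\times Y}$ with respect to $g_Y$. The second is a Poincar\'e-type inequality of order $\ell$ on the fixed compact manifold $(Y,g_Y)$.

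\emph{Step 1 (scaling).} The metric $g_B+t^2g_Y$ is a Riemannian product, so each fiber $\{z\}\times Y$ is totally geodesic. Its Levi-Civita connection splits, and it agrees with that of $g_Y$ on the $Y$-factor, because scaling a metric by a constant does not change its connection. Hence, if we evaluate $\nabla^\ell_{g_B+t^2g_Y}u$ on $\ell$ vectors tangent to the fiber, we get exactly $\nabla^\ell_{g_Y}(u|_{\{z\}\times Y})$. Now measure norms. Vectors of unit length for $g_B+t^2g_Y$ have $g_Y$-length $t^{-1}$, so
\begin{equation*}
|\nabla^\ell_{g_Y}(u|_{\{z\}\times Y})|_{g_Y} = t^{\ell}\,|\text{fiber part of }\nabla^\ell_{g_B+t^2g_Y}u|_{g_B+t^2g_Y} \leq C t^\ell .
\end{equation*}
The lemma therefore reduces to the following claim on a fixed $(Y,g_Y)$: there is $A=A(Y,g_Y,\ell)$ such that for all $f\in C^\infty(Y)$,
\begin{equation*}
\sup_Y |f-\bar f| \leq A \sup_Y |\nabla^\ell_{g_Y} f|_{g_Y},
\end{equation*}
where $\bar f$ denotes the average of $f$.

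\emph{Step 2 (higher-order Poincar\'e on $Y$).} For $\ell=1$, integrate $df$ along minimal geodesics. This gives $\mathrm{osc}\, f\le \mathrm{diam}(Y)\sup|\nabla f|$, which suffices. For $\ell\ge2$ it is enough to show $\sup|\nabla f|\le A\sup|\nabla^\ell f|$, and we argue by contradiction. Suppose there are $f_i$ with $\bar f_i=0$, $\sup|\nabla f_i|=1$ and $\sup|\nabla^\ell f_i|\to0$. Then $\|f_i\|_{L^\infty}$ is bounded by the $\ell=1$ case. Standard $L^p$ elliptic and interpolation estimates on the compact manifold $Y$ bound all derivatives up to order $\ell$ in $L^p$ by $\|\nabla^\ell f_i\|_{L^p}+\|f_i\|_{L^1}$. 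So $f_i$ is bounded in $W^{\ell,p}$ for every $p<\infty$, hence in $C^{\ell-1,\beta}$ by Morrey, which contains $C^1$ since $\ell\ge 2$. By Arzel\`a--Ascoli a subsequence converges in $C^1$ to some $f$ with $\bar f=0$, $\sup|\nabla f|=1$, and $\nabla^\ell f=0$ weakly; elliptic regularity makes $f$ smooth.

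Next we show the limit is constant. Integrating by parts,
\begin{equation*}
\|\nabla^{k}f\|_{L^2}^2=-\int\langle\nabla^{k-1}f,\mathrm{tr}\,\nabla^{k+1}f\rangle .
\end{equation*}
Taking $k=\ell-1$ gives $\nabla^{\ell-1}f=0$, and descending induction gives $\nabla f=0$. This contradicts $\sup|\nabla f|=1$.

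The same identity also yields a direct $L^2$ version with explicit constants. With $a_k=\|\nabla^k(f-\bar f)\|_{L^2}$ we have $a_k^2\le a_{k-1}a_{k+1}$, and Poincar\'e gives $a_0\le c\,a_1$. Together these give $a_k\le c\,a_{k+1}$ by induction, so $a_0\le c^\ell a_\ell$. This could replace the compactness argument if one prefers constructive constants.

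The main obstacle is Step 2, specifically passing from the $L^\infty$ bound on $\nabla^\ell f$ to an $L^\infty$ bound on $f-\bar f$ when $\ell$ is small compared to $\dim Y$. There one cannot simply use $L^2$ Sobolev embedding, which is why I would go through $L^p$ elliptic estimates for all $p$ and then the compactness argument. Step 1 is routine once one notes the product structure and the $t^{\ell}$ scaling of $\ell$-tensors.
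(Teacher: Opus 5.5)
Your proof is correct, and its skeleton is the paper's. In both, $u_0$ is the fiberwise average, and the hypothesis is transferred to each fiber through the product structure and the $t^\ell$ scaling; you justify that step explicitly, whereas the paper uses it tacitly. Everything then reduces to $\sup_Y|\nabla f|\le A\sup_Y|\nabla^\ell f|$ on the fixed $(Y,g_Y)$, plus integration along geodesics.

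The difference is in how that inequality is proved. The paper proves only a two-derivative estimate, $\|\nabla T\|_{C^0}\le C\|\nabla^2 T\|_{C^0}$, but for arbitrary tensors $T$, and iterates it on $T=f,\nabla f,\ldots,\nabla^{\ell-2}f$. Since only $T,\nabla T,\nabla^2 T$ ever appear, Arzel\`a--Ascoli in $C^{1,\alpha}$ gives compactness. The price is dividing out the finite-dimensional space of parallel tensors, via the $L^2$ projection and the auxiliary claim that the $Q_i$ stay bounded.

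You stay with functions, so the only kernel is the constants, removed by $\bar f_i=0$, and you get the order-$\ell$ inequality in one contradiction argument. The price is that for $\ell\ge 3$ the intermediate derivatives $\nabla^2 f_i,\ldots,\nabla^{\ell-1}f_i$ are not a priori bounded. You therefore need $L^p$ interpolation and Morrey's embedding for compactness, and then your descending integration by parts to show the limit is constant. The paper uses the same identity once, to go from ``$\nabla Q_\infty$ parallel'' to $\nabla Q_\infty=0$. So the paper's route is more elementary and yields a reusable statement for tensors, while yours is more direct for functions.

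Two small remarks on your aside. First, Cauchy--Schwarz gives $a_k^2\le\sqrt{\dim Y}\,a_{k-1}a_{k+1}$, since $|\mathrm{tr}_{12}S|\le\sqrt{\dim Y}\,|S|$. Second, the $L^2$ chain only bounds $\|f-\bar f\|_{L^2}$, so it replaces the compactness step only when fed into your $L^p$ estimate and Morrey's embedding. Neither affects the main argument, and the omitted case $\ell=0$ is trivial with $A=2$.
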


Furthermore, a parabolic version of Theorem \ref{thm:HT2} was shown in \cite{HeinLeeTosatti}. Recall that a holomorphic line bundle on a compact Kähler manifold $\overline{\mathscr{Y}}$ is \emph{nef} if its Chern class lies in the closure of the Kähler cone. The Abundance Conjecture predicts that if the canonical bundle of $\overline{\mathscr{Y}}$ is nef, it must be \emph{semiample}, i.e. sufficiently high tensor powers provide a well-defined map from $\overline{\mathscr{Y}}$ to projective space, the \emph{Iitaka fibration} of $\overline{\mathscr{Y}}$, whose image has dimension equal to the \emph{Kodaira dimension} of $\overline{\mathscr{Y}}$. We also recall that the canonical bundle is nef if and only if all solutions to Kähler-Ricci flow on $\overline{\mathscr{Y}}$ are immortal, i.e. they exist for all positive times \cite{TianZhang}. In this definition one can equivalently use either the unnormalized Kähler-Ricci flow $\partial\omega/\partial t = -{\rm Ric}_\omega$ or the normalized flow $\partial\omega/\partial t = -{\rm Ric}_\omega - \omega$. The parabolic version of Theorem \ref{thm:HT2} in \cite{HeinLeeTosatti} implies the following result, which settles a conjecture of Song-Tian \cite{SongTian}. For some additional, very recent developments in this direction, see also \cite{Wenrui}.

\begin{theorem}[Hein-Lee-Tosatti \cite{HeinLeeTosatti}]\label{thm:HLT}
Let $\overline{\mathscr{Y}}$ be a compact Kähler manifold whose canonical bundle is nef and semiample, with Kodaira dimension strictly between $0$ and $\dim \overline{\mathscr{Y}}$. Then, for any normalized Kähler-Ricci flow on $\overline{\mathscr{Y}}$, the Ricci curvature stays bounded as $t \to \infty$ on any regular tube $\mathscr{Y}$ of the Iitaka fibration.
\end{theorem}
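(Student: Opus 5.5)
The plan is to deduce the Ricci bound from a parabolic analogue of Theorem~\ref{thm:HT2}. The point is that along the normalized flow
\[
\frac{\partial\omega}{\partial t} = -\mathrm{Ric}_\omega - \omega
\]
we have $\mathrm{Ric}_{\omega(t)} = -\partial_t\omega(t) - \omega(t)$. So it suffices to show that $|\partial_t\omega + \omega|_{\omega(t)} \leq C$ on a regular tube $\mathscr{Y} \cong B\times Y$ of the Iitaka fibration $\overline{\mathscr{Y}} \to \overline{B}$. Here $Y$ is a Calabi-Yau fiber, since $K$ is semiample of intermediate Kodaira dimension.

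It is standard that the fibers collapse at rate $e^{-t}$, i.e.\ the relevant fiber scale is $\varepsilon = e^{-t/2}$. Accordingly, I use the collapsing reference metrics $g_{z,\varepsilon} = g_{\rm Euc} + \varepsilon^2 g_{\mathscr{Y}_z}$ of Section~\ref{sec:fromLTtoKEonCyl} with $\varepsilon = e^{-t/2}$.

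\textbf{Step 1: $C^2$ estimate.} First I would establish (or quote, as the parabolic analogue of Tosatti's estimate) the uniform equivalence
\[
C^{-1} g_{z,e^{-t/2}} \leq g(t) \leq C g_{z,e^{-t/2}}
\]
on $\mathscr{Y}$ for $t \geq 1$. This should follow from Schwarz-lemma type maximum-principle arguments for the flow, using the potential bounds available when $K$ is semiample.

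\textbf{Step 2: parabolic H\"older estimate.} Second, I would prove a parabolic version of Theorem~\ref{thm:HT1}: a uniform $C^{\alpha}$ seminorm bound on $g(t)$, measured with parallel transport of $g_{z,\varepsilon}$ and the parabolic distance, uniform in $t$. The argument is blowup and contradiction exactly as in Section~\ref{sec:fromLTtoEK}. After parabolic rescaling by the blowup factor $\lambda_i$, the rescaled flows become almost Ricci-flat (the $-\omega$ term scales away). The three regimes $\lambda_i e^{-t_i/2} \to \infty$, $\to c\in(0,\infty)$, $\to 0$ then give ancient (in fact eternal) solutions of Ricci flow uniformly equivalent to the flat metric on $\mathbb{C}^n\times Y$, on $\mathbb{C}^m$, or on $\mathbb{C}^n$ (in the collapsed case). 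For these I need a parabolic Liouville theorem. I would first try to show that such a solution is static, using that $\partial_t\omega = -\mathrm{Ric}$ is $\partial\bar\partial$ of a bounded-oscillation quantity and that $\omega$ stays in the same class. Once it is static, Theorem~\ref{thm:HJH} (respectively Theorem~\ref{cor:YauLT}) forces it to be flat, which gives the contradiction. As in difficulty (3) after Theorem~\ref{thm:HT1}, in the collapsed regime one must check that the nontrivial difference quotient lives in base--base components.

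\textbf{Step 3: higher-order estimates and expansion.} Third, I would bootstrap to uniform higher-order $C^k$ estimates with respect to $g_{z,\varepsilon}$, including time derivatives. This uses parabolic Schauder theory uniform in $\varepsilon$, as in Claim~2, together with linear Liouville theorems for the heat equation on cylinders $\mathbb{C}^n\times Y$. From these estimates, Lemma~\ref{lem:1}-type fiberwise averaging yields an expansion
\[
\omega(t) = \omega_{\rm can} + e^{-t}\omega_{\rm SRF} + e^{-2t}\sqrt{-1}\partial\bar\partial\Big(g_{\rm can}^{\bar\mu\nu}\Delta^{-2}_{\omega_{\mathscr{Y}_z}}\langle A_\nu,A_{\bar\mu}\rangle^\circ\Big) + \text{(base terms)} + \text{(error)}.
\]
Here $\omega_{\rm can}$ is the twisted K\"ahler--Einstein metric, $\mathrm{Ric}(\omega_{\rm can}) = -\omega_{\rm can} + \omega_{\rm WP}$. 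The error is $O(e^{-(1+\delta)t})$ in the $g_{z,\varepsilon}$-norms, with control on its $t$-derivative.

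\textbf{Step 4: computing the Ricci curvature.} Finally, I plug the expansion into $\partial_t\omega + \omega$:
\begin{itemize}
\item The $\omega_{\rm SRF}$ term cancels exactly: $\partial_t(e^{-t}\omega_{\rm SRF}) + e^{-t}\omega_{\rm SRF} = 0$.
\item The $e^{-2t}$ correction contributes $-e^{-2t}(\cdots)$. Its base--fiber and fiber--fiber components are of size $O(1)$ in the $g(t)$-norm, since each fiber derivative costs $\varepsilon^{-1} = e^{t/2}$.
\item The remaining base terms converge to $\omega_{\rm can}$, so their time derivatives are bounded.
\end{itemize}
This gives a bounded Ricci curvature rather than a merely $o(\varepsilon^{-2})$ one.

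\textbf{Main obstacle.} I expect Step 3 to be the hardest, specifically getting the expansion with \emph{time-derivative} control of the error strong enough to survive two fiber derivatives. Naive estimates give only $|\mathrm{Ric}| = o(e^{t})$. Closing the gap requires identifying the Kodaira--Spencer obstruction term precisely and showing that the remaining base-pulled-back $o(1)$ terms are slowly varying in $t$. This is where the parabolic Liouville theorem on cylinders must be applied to the linearized flow, iteratively.
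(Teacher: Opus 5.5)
Your route is the one the paper has in mind. The notes only assert that the parabolic analogue of Theorem~\ref{thm:HT2} from \cite{HeinLeeTosatti} implies the Ricci bound. Your Step~4 is exactly that deduction via $\mathrm{Ric}_{\omega(t)}=-\partial_t\omega-\omega$, including the exact cancellation $(\partial_t+1)(e^{-t}\omega_{\rm SRF})=0$.

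Two statements in your sketch fail as written, and both sit where the real work is.

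\textbf{Step~3.} You cannot have uniform $C^k$ bounds on $\omega(t)$ itself with respect to $g_{z,\varepsilon}$ for every $k$, because they contradict the Kodaira--Spencer term in your own expansion. Put $\Psi=g_{\rm can}^{\bar\mu\nu}\Delta^{-2}_{\omega_{\mathscr{Y}_z}}\langle A_\nu,A_{\bar\mu}\rangle^\circ$. Each of the two form slots and each of the $k$ derivative slots in fiber directions costs $\varepsilon^{-1}$, so $|\nabla^k_{g_{z,\varepsilon}}(\varepsilon^4\sqrt{-1}\partial\bar\partial\Psi)|_{g_{z,\varepsilon}}\sim\varepsilon^{2-k}$. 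This is unbounded for $k\geq 3$ as soon as the term is nonzero: a nonzero $\sqrt{-1}\partial\bar\partial_Y\Psi$ cannot have $\nabla^k_Y(\sqrt{-1}\partial\bar\partial_Y\Psi)\equiv0$, by Lemma~\ref{lem:tensor-poincare}. So the order ``all-order estimates for $\omega(t)$ first, expansion by fiberwise averaging second'' cannot be carried out. The blowup/Liouville estimates have to be proved for successive remainders, after subtracting explicitly constructed fiber-dependent corrections; you only allude to this iteration at the end. This does not break the theorem, since bounded Ricci curvature is equivalent to $|\partial_t\omega|_{g(t)}\leq C$, but it is the actual structure of the argument.

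\textbf{Step~4.} The sentence ``the remaining base terms converge to $\omega_{\rm can}$, so their time derivatives are bounded'' is a non sequitur: convergence gives no control on $\partial_t$. The implication from the expansion to the Ricci bound holds only for an expansion in which every term comes with uniform control of its $t$-derivative in $g(t)$-norm. That includes the $o(1)$ base terms and the remainder. This is exactly the ``slowly varying in $t$'' issue you name as the main obstacle, and it must be built into the parabolic estimates rather than deduced from convergence.

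\textbf{Smaller points.}
\begin{itemize}
\item After parabolic rescaling the flows become almost \emph{unnormalized} Kähler--Ricci flows, not almost Ricci-flat. For the non-collapsed limits, staticity follows most easily from the Song--Tian uniform scalar curvature bound for semiample $K$. Since $R(\hat\omega_i)=\lambda_i^{-2}R(\omega)\to0$, the limits are scalar-flat, hence Ricci-flat by the bounded-harmonic-function argument in the proof of Theorem~\ref{thm:JJKLiou}.
\item The regimes $\lambda_ie^{-t_i/2}\to\infty$ and $\lambda_ie^{-t_i/2}\to c$ give $\mathbb{C}^m$ and $\mathbb{C}^n\times Y$ respectively. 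The latter carries a Ricci-flat product metric, not a flat one, and Theorem~\ref{thm:HJH} yields a product rather than flatness.
\item The Kodaira--Spencer term contributes only $O(e^{-t})$, not $O(1)$, to $|\partial_t\omega+\omega|_{g(t)}$, since its fiber--fiber part has $g(t)$-norm $\sim e^{-2t}\cdot e^{t}$.
\end{itemize}
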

 
\section{Calabi-Yau cones and Klemmensen's thesis}\label{sec:cones}

In Sections \ref{sec:LTonCyl}--\ref{sec:fromLTtoKEonCyl} we considered Calabi-Yau metrics on collapsing fibrations. In this setting one can prove a $C^2$ estimate using classical computations \cite{Tosatti2010} but such computations seem to fail to yield a $C^3$ estimate or a Liouville theorem on $\mathbb{C}^n \times Y$. Thus, we chose an indirect approach, proving Liouville geometrically and deducing a $C^{2,\alpha}$ estimate by blowup and contradiction. However, it seems possible that the classical $C^3$ computation can be made to work after all because the conclusion of Theorem \ref{thm:HJH} is still $\nabla_{\omega_{\rm Ref}}\omega = 0$. We now consider a setting where the $C^2$ estimate is unknown and the conclusion of Liouville is no longer that $\nabla_{\omega_{\rm Ref}}\omega = 0$.

\subsection{Setting and statement of Klemmensen's theorem}

\begin{definition}
Let $(L, g_L)$ denote a compact Riemannian manifold.
\begin{enumerate}
    \item The \textbf{cone} $(\mathscr{C}, g_\mathscr{C})$ over $L$ is the product space $\mathscr{C} = \mathbb{R}^+\times L$ with metric 
    \begin{equation*}
    g_\mathscr{C}=dr^2\oplus r^2 g_L.
    \end{equation*}
    \item We say $(\mathscr{C},g_{\mathscr{C}})$ is a \textbf{Calabi-Yau cone} if there is a $g_{\mathscr{C}}$-parallel complex structure $J$ and a $g_\mathscr{C}$-parallel $J$-holomorphic volume form $\Omega$ on the manifold $\mathscr{C}$. These are not unique and are part of the data of a Calabi-Yau cone. Note that ${\rm Ric}_{g_{\mathscr{C}}}=0$.
\end{enumerate}  
\end{definition}

\begin{remark}
    Even if one only assumes that $(\mathscr{C},g_{\mathscr{C}})$ as in item 1 above is Kähler with respect to some complex structure $J$, one can already deduce from this that 
    $$\omega_{\mathscr{C}} = \frac{\sqrt{-1}}{2}\partial\bar \partial r^2.$$
\end{remark}

\begin{example}\label{ex:CYcones}
\begin{enumerate}
    \item Flat cones $\mathbb{C}^n/\Gamma$, where $\Gamma \subset \mathrm{SU}(n)$ is finite and acts freely on $S^{2n-1}$, are Calabi-Yau cones. The metric $\omega_\mathscr{C}$ is the pushdown of $\omega_{\mathrm{Euc}}$ on $\mathbb{C}^n$. Similarly, the holomorphic volume form $\Omega$ is the pushdown of $\Omega_{\rm Euc} = dz^1 \wedge \cdots \wedge dz^n$ on $\mathbb{C}^n$. The standard example of such a finite group $\Gamma$ is
    \begin{equation*}
    \Gamma = \left\langle
    {\rm Diag}\left(e^{\frac{2\pi i}{n}}, e^{\frac{2\pi i}{n}}, \ldots, e^{\frac{2\pi i}{n}}\right)
    \right\rangle\cong \mathbb{Z}/n\mathbb{Z}.  
    \end{equation*}
    This cone admits a \emph{crepant resolution} by the total space of the holomorphic line bundle
    $\mathscr{O}_{\mathbb{P}^{n-1}}(-n).$
    See Example \ref{blocki} for an example of how this arises for $n=2$.
    \item Consider the \emph{conifold} or \emph{Morse singularity} or \emph{Stenzel cone}
    \begin{equation*}
    \mathscr{C} \cup \{0\}=\{z^2_1+z^2_2+\cdots + z^2_{n+1}=0\}\subset \mathbb{C}^{n+1}, \;\, \omega_{\mathscr{C}} = \frac{\sqrt{-1}}{2}\partial\bar \partial r^2,  \;\, r=|z|^{\frac{n-1}{n}}.
    \end{equation*} 
    The case $n=2$ is the same as above, whereas $|{\rm Rm}_{g_\mathscr{C}}|_{g_\mathscr{C}}\sim r^{-2}$ for $n \geq 3$. There can be no upper or lower sectional curvature bounds because $\mathrm{Ric}_{g_\mathscr{C}}=0$.  
\end{enumerate}
\end{example}

\begin{figure}[h]
    \includegraphics[scale=.6]{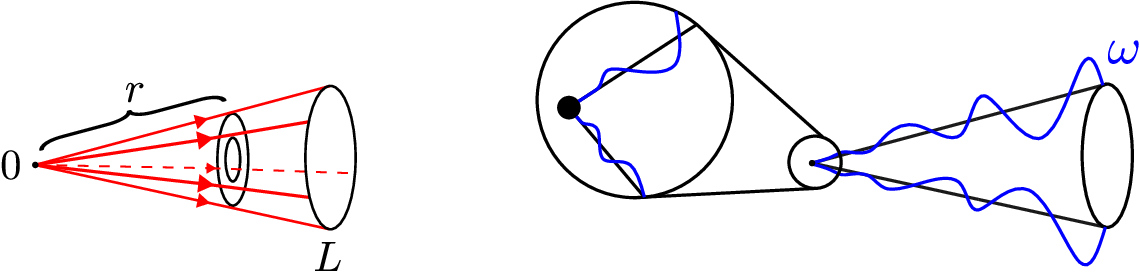}
    \caption{A Calabi-Yau cone. We hope that if we consider a metric $\omega$ uniformly equivalent to $\omega_{\mathscr{C}}$, then, if $\omega$ has bounded scalar curvature, it must approach the cone at the apex, as opposed to oscillating an infinite number of times (which is allowed by the uniform equivalence).}
   \label{cone}
\end{figure}

\begin{theorem}[Klemmensen \cite{klemmensen}]\label{thm:JJK}
For every Calabi-Yau cone $\mathscr{C}$ there exists an $\alpha >0$ such that, given any two constants $C,D<\infty$, there exists some constant $C'<\infty$ such that if $\omega$ is a K\"ahler metric on $B_3(0)\subset \mathscr{C}$ satisfying 
\begin{equation*}
    \frac1C\omega_\mathscr{C}\leq \omega\leq C\omega_\mathscr{C}, \quad |\mathrm{Scal}_\omega|\leq D,
\end{equation*} 
then there exists an automorphism $\Phi$ of $(\mathscr{C},J)$ with 
$ \Phi_*(r\partial_r)=r\partial_r$ such that
\begin{equation*}
    |\Phi^*\omega-\omega_\mathscr{C}|_{\omega_\mathscr{C}}\leq C' r^\alpha \quad \text{on}\;\,B_1(0).
\end{equation*}
\end{theorem}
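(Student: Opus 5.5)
Following the pattern of Section \ref{sec:fromLTtoEK}, the plan is to deduce Theorem \ref{thm:JJK} from a Liouville theorem by blowup and contradiction. The relevant Liouville statement is the following. Let $\omega$ be a Kähler metric on the whole cone $\mathscr{C}$ with $\frac1C\omega_{\mathscr{C}}\le\omega\le C\omega_{\mathscr{C}}$ and $\mathrm{Scal}_\omega=0$; we want $\omega=\Phi^*\omega_{\mathscr{C}}$ for some automorphism $\Phi$ commuting with $r\partial_r$. We need more than this, though: a \emph{quantitative decay} version, because the conclusion is a rate $r^\alpha$ and not a H\"older seminorm.

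Concretely, the key step is an \textbf{improvement-of-flatness lemma}. There should be $\theta\in(0,1)$ and $\epsilon>0$ with the following property. Suppose $\omega$ on $B_1$ satisfies the hypotheses with $D$ replaced by $\epsilon$, and suppose $\inf_\Phi\sup_{B_1}|\Phi^*\omega-\omega_{\mathscr{C}}|\le\delta\le\epsilon$. Then on $B_\theta$, after rescaling by $\theta$, the same infimum is at most $\delta/2$. Iterating at dyadic scales $\theta^k$ gives the $r^\alpha$ bound with $\alpha=\log 2/\log(1/\theta)$. For this to work, the automorphisms $\Phi_k$ must converge, which follows from a geometric series, since consecutive automorphisms differ by $O(2^{-k})$. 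The scalar curvature term rescales as $\theta^{2k}D$, so after finitely many steps it is below $\epsilon$. Uniform equivalence alone does not yield small initial $\delta$, however. To get it we first need a compactness step. Rescale $\omega$ at scale $s\to0$ via the cone dilation, and take subsequential weak limits; these are uniformly equivalent to $\omega_{\mathscr{C}}$ and scalar-flat. A Liouville theorem should then say each such tangent metric is $\Phi^*\omega_{\mathscr{C}}$. Upgrading weak convergence to smallness in a norm uses local Evans--Krylov type regularity on annuli. Away from the apex the cone is smooth, and the scalar curvature equation is a fourth-order (coupled Monge--Amp\`ere plus Laplace) system. Given a uniform equivalence and bounded scalar curvature, it gives $C^{1,\beta}$ bounds on $\omega$ on annuli, via the Chen--Wang type estimates cited above, scaled appropriately.

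The improvement step itself is proved by contradiction and linearization. Take a sequence with $\delta_i\to0$ that fails to improve. Normalize $h_i=(\omega_i-\Phi_i^*\omega_{\mathscr{C}})/\delta_i$, with $\Phi_i$ chosen optimal. By the annulus estimates, $h_i$ converges on compact subsets of $B_1\setminus\{0\}$ to a bounded closed $(1,1)$-form $h$ on the cone whose potential solves the linearized scalar curvature equation, namely $\Delta_{\mathscr{C}}^2\phi=0$, together with $\sup|h|\le1$. Separating variables on the link $L$, $\phi$ is a sum of homogeneous solutions $r^{\gamma}\psi(\text{link})$. Boundedness of $h$ on $B_1$ restricts the spectrum to $\gamma\ge2$. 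The exponents with $\gamma=2$ exactly give $\sqrt{-1}\partial\bar\partial$ of the degree-$2$ modes. These correspond to infinitesimal automorphisms commuting with $r\partial_r$ (holomorphic vector fields preserving the Reeb field, together with a rescaling), plus possibly $\sqrt{-1}\partial\bar\partial$ of pluriharmonic functions, which contribute nothing. They are killed by the optimality of $\Phi_i$. There remains the spectral gap: the next exponent satisfies $\gamma>2$. This gives $|h|\le Cr^{\gamma-2}$ on $B_\theta$, which is less than $1/4$ for small $\theta$, contradicting non-improvement.

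The main obstacle I expect is the \textbf{apex}. Convergence of $h_i$ near $0$ and boundedness of the limit in a sense strong enough to rule out singular homogeneous modes ($\gamma<2$, which would blow up) must come from the uniform $L^\infty$ bound $|h_i|\le 1$. The trouble is that without a $C^2$ estimate relative to $\omega_{\mathscr{C}}$ (cf.\ Remark \ref{rem:C^2sing}), we only control $h_i$ in $L^\infty$ and have no control on its derivatives near the tip. I would handle this by working at the level of potentials. I would use the cone $\partial\bar\partial$-lemma with estimates, the analogue of the fourth exercise on annuli, patched with the $r^2$ homogeneity. The aim is to show that the limit potential $\phi$ is $O(r^2)$ and biharmonic on $\mathscr{C}$, and then use the weighted spectral decomposition to discard all modes with $\gamma<2$. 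A secondary difficulty is confirming that the degree-$2$ kernel is exactly the tangent space to the orbit of automorphisms commuting with $r\partial_r$; this is what makes the choice of $\Phi$ in the statement necessary.
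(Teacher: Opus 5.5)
Your linear machinery (linearizing at a Ricci-flat cone metric to get $\Delta^2_{\mathscr{C}}\phi=0$, using indicial roots, and identifying the degree-$2$ kernel with transverse automorphisms) is reasonable. The gap is that the iteration cannot be started.

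\textbf{The gap.} Your improvement lemma assumes $\inf_\Phi\sup_{B_1}|\Phi^*\omega-\omega_{\mathscr{C}}|\le\delta$ on the \emph{whole} punctured ball, with one fixed $\Phi$. You rely on exactly this bound, $|h_i|\le 1$ up to the apex, to discard the modes with $\gamma<2$. Your compactness step does not supply it. The rescaled metrics $\omega_\lambda=\lambda^{-2}\Phi_\lambda^*\omega$ converge as $\lambda\to0$ only in $C^\infty_{\rm loc}$ of $\mathscr{C}=\mathbb{R}^+\times L$, i.e.\ on compact sets away from the apex. So Theorem \ref{thm:JJKLiou} only tells you that on each annulus $\{\lambda/2<r<\lambda\}$ the metric is close to some $\Psi_\lambda^*\omega_{\mathscr{C}}$, and the transverse automorphism $\Psi_\lambda$ may drift as $\lambda\to0$. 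Closeness to a single $\Psi^*\omega_{\mathscr{C}}$ uniformly on all of $B_\lambda$ says that $\omega$ does not keep oscillating between different tangent cones at the apex, which is the scenario in Figure \ref{cone}. That is essentially the theorem you are trying to prove, so as written the argument is circular. A smaller point: the linearized limit needs $\mathrm{Scal}_{\omega_i}/\delta_i\to0$, so the lemma should assume $|\mathrm{Scal}|\le\epsilon\delta$ rather than $\le\epsilon$. The iteration accommodates this if $\theta^2<1/2$.

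\textbf{Two ways to close it.} Within your linear framework, replace the ball-based improvement by a Simon-type three-annulus argument. Let $d_k$ be the optimal deviation from $\Sigma_{3C}$ on the annulus $\{2^{-k-1}<r<2^{-k}\}$. You need the spectral gap on both sides of $\gamma=2$ and integrability of the degree-$2$ kernel. These give that, once $d_k$ is small, $d$ either decays toward the apex by a fixed factor or grows toward it by a fixed factor. The growth alternative propagates inward and contradicts $d_k\to0$, and $d_k\to0$ is exactly what your compactness step plus Theorem \ref{thm:JJKLiou} does give, uniformly in $C,D$. The modes with $\gamma<2$ are then excluded by this dichotomy rather than by boundedness.

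The paper, following Klemmensen, avoids linearization altogether. It defines a Krylov/Campanato-type seminorm over the punctured regions $B_{\rho,\nu}(x)$, measuring the distance of $\omega$ to $\Sigma_{3C}+\Sigma_{\rm loc}$. A weight $f$ vanishing within $\delta$ of the apex makes the seminorm finite a priori. It is then bounded uniformly as $\delta\to0$ by blowup and contradiction with the nonlinear Theorem \ref{thm:JJKLiou}. Because the normalized seminorm controls all points and scales at once, the blowup limits come with the needed control near the apex automatically. No spectral analysis or integrability of Jacobi fields is needed in this step. The $r^\alpha$ decay and the limiting $\Phi$ then follow by looking at the annuli $B_{\rho,\nu}(0)$.
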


\begin{remark}
    \begin{enumerate}
    \item The automorphisms $\Phi$ of $(\mathscr{C},J)$ that preserve the Euler vector field $r \partial_r$ are the so-called \emph{transverse automorphisms} of the cone. They form a finite-dimensional complex Lie group. At this point it is helpful to recall that the full automorphism group of $\mathbb{C}^n$, $n \geq 2$, is infinite-dimensional:
    \begin{equation*} 
        \Phi(z_1,z_2) = (z_1,z_2+f(z_1)) 
    \end{equation*}
    is a (volume-preserving!) automorphism of $\mathbb{C}^2$ for any arbitrary holomorphic function $f:\mathbb{C}\to \mathbb{C}$.
    However, the transverse automorphism group of $\mathbb{C}^n$ with respect to its standard flat cone structure is simply ${\rm GL}(n,\mathbb{C})$.
    
    \item For $\mathscr{C}=\mathbb{C}^n$, Theorem \ref{thm:JJK} follows from the usual de Giorgi-Nash-Moser theory and Evans-Krylov estimates applied to the equation
    \begin{equation*}
        |\mathrm{Scal}_\omega|=\left|\Delta_{\omega}\log\left(\frac{\omega^n}{\omega^n_{\mathrm{Euc}}}\right)\right|\leq D.
    \end{equation*}
    \item Theorem \ref{thm:JJK} is similar, but not equivalent to theorems of Hein-Sun \cite{HeinSun} and Chiu-Sz\'ekelyhidi \cite{ChiuGabor}. The key difference is that Klemmensen's proof is elementary but he has to assume $\omega$ is uniformly equivalent to $\omega_{\mathscr{C}}$, whereas the other authors make no such assumption (in their setting) but need to use the highly nontrivial Donaldson-Sun theory of tangent cones of Kähler-Einstein spaces \cite{DS}.
\end{enumerate}
\end{remark}

As one might expect at this point, the proof of Theorem \ref{thm:JJK} factors into two steps: a Liouville theorem over the Calabi-Yau cone $\mathscr{C}$, and a proof of an Evans-Krylov type estimate (that implies Theorem \ref{thm:JJK}) by blowup and contradiction to Liouville.

\subsection{Liouville on Calabi-Yau cones}

\begin{theorem}[Klemmensen \cite{klemmensen}]\label{thm:JJKLiou} If $\omega$ is a Kähler metric on $\mathscr{C}$ with 
\begin{equation*}
\frac 1C\omega_\mathscr{C}\leq \omega\leq C\omega_\mathscr{C}, \quad \mathrm{Scal}_\omega=0,
\end{equation*}
then $\omega=\Phi^*\omega_\mathscr{C}$ for some transverse automorphism $\Phi$ of $\mathscr{C}$.
\end{theorem}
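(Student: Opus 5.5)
We plan to follow the strategy of Section \ref{sec:YauLT}, replacing constant vector fields on $\mathbb{C}^n$ by a tangent-cone-at-infinity argument and using the holomorphic structure of the cone.

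\textbf{Step 1: reduce to the Kähler-potential level.} Write $\omega = \omega_{\mathscr{C}} + \sqrt{-1}\partial\bar\partial \phi$. We expect this to be possible because $\mathscr{C}$ deformation-retracts to the link and, after fixing the class, the uniform equivalence forces the cohomology class to be that of the cone at large scale; assume this for now. Set $F = \log(\omega^n/\omega_{\mathscr{C}}^n)$. Then $F$ is bounded by uniform equivalence, and since ${\rm Ric}_{\omega_\mathscr{C}}=0$ we have $\mathrm{Scal}_\omega = -\Delta_\omega F = 0$. Thus $F$ is a bounded $\omega$-harmonic function on a complete manifold that is quasi-isometric to a cone with ${\rm Ric}\ge0$. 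Such manifolds satisfy volume doubling and a Poincaré inequality, which pass along quasi-isometries, so the Yau/Li–Tam Liouville theorem for bounded harmonic functions applies via Moser–Harnack theory. Hence $F$ is constant, $\omega$ is Ricci-flat, and $\omega^n = c\,\omega_{\mathscr{C}}^n$.

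\textbf{Step 2: blow down and up.} For $\lambda>0$ let $\delta_\lambda$ denote the dilation $r\mapsto\lambda r$ and set $\omega_\lambda = \lambda^{-2}\delta_\lambda^*\omega$. Each $\omega_\lambda$ is Ricci-flat and uniformly equivalent to $\omega_{\mathscr{C}}$. On the smooth part, Evans–Krylov (Theorem \ref{thm:EKclass}) applied locally away from the apex gives $C^\infty_{\rm loc}$ subsequential limits as $\lambda\to\infty$ and as $\lambda\to0$. Away from the apex the cone is locally a ball of bounded geometry at unit scale, so scaling-invariant interior estimates hold on $\{1/2<r<2\}$ uniformly in $\lambda$.

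\textbf{Step 3: monotone quantities to identify the limits.} In place of $|\xi|^2_\omega$ we use holomorphic objects on the cone that are homogeneous under the Reeb/Euler action. Natural candidates are the holomorphic vector field $\xi = r\partial_r - \sqrt{-1}J r\partial_r$ and holomorphic $1$-forms and functions of minimal degree. By Bochner with ${\rm Ric}_\omega=0$, both $|\xi|^2_\omega/r^2$-type quantities and $|\partial f|^2_\omega$ for $\omega_{\mathscr{C}}$-harmonic homogeneous holomorphic $f$ are subharmonic up to weights. The plan is to find a scale-invariant quantity, for instance $\Theta(R)=R^{-2}\fint_{\partial B_R}|\xi|_\omega^2$ or an $L^2$ frequency of $\phi$, which is monotone in $R$ by Li's Theorem \ref{lithm} in its mean-value form. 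This would show that the limits at $0$ and at $\infty$ are themselves \emph{cones}, i.e. $\delta_\lambda$-invariant Ricci-flat Kähler metrics uniformly equivalent to $\omega_{\mathscr{C}}$ with the same $J$. A Calabi-type rigidity for Kähler cone metrics with fixed complex structure and Reeb field, namely that the Sasaki–Einstein metric is unique up to transverse automorphisms (Bando–Mabuchi/Nitta–Sekiya type uniqueness), then identifies each limit as $\Psi^*\omega_{\mathscr{C}}$. Equality in the monotonicity between the two ends forces $\omega$ itself to be conical, and hence $\omega=\Phi^*\omega_{\mathscr{C}}$.

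\textbf{Main obstacle.} The hardest step is Step 3: producing a genuinely monotone, scale-invariant quantity whose rigidity case is exactly ``$\omega$ is a cone with the same Euler field,'' and proving that the tangent cones at $0$ and $\infty$ lie in the same orbit of the transverse automorphism group. The latter group is noncompact, so limits may drift along it. As a fallback we would linearize: write $\omega_\lambda = \Psi_\lambda^*\omega_{\mathscr{C}} + \eta_\lambda$ with $\eta_\lambda$ orthogonal to the infinitesimal transverse automorphisms. We would then use the spectral gap of $\Delta_{\omega_\mathscr{C}}$ on the link, whose homogeneous harmonic $(1,1)$-forms of degree $0$ are exactly these automorphisms, to obtain a three-annulus inequality à la Simon \cite{Simon}. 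This forces $\eta$ to decay polynomially at both ends, so $\eta\equiv0$ by boundedness.
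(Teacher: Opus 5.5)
\textbf{There is a genuine gap: Step 3, where the whole proof lives, is missing its two key ideas.}

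Your Steps 1 and 2 match the paper's opening moves. In Step 1, the bounded $\omega$-harmonic log volume ratio is killed by a Harnack inequality that is stable under uniform equivalence. In Step 2, you rescale $\omega_\lambda=\lambda^{-2}\delta_\lambda^*\omega$ and use local Evans--Krylov compactness. There are two small inaccuracies: $(\mathscr{C},\omega)$ is not complete at the apex, so Saloff-Coste's argument must be adapted rather than quoted, and the global potential $\phi$ is neither justified nor needed.

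\emph{The monotone quantity does not work.} The quantity $|\xi|^2_\omega$ grows like $r^2$, and the space is incomplete, so Theorem \ref{lithm} does not apply to it. In any case, Li's theorem identifies a supremum with a limit of spherical means. It does not make a weighted mean monotone in $R$, let alone one whose equality case is ``$\omega$ is a cone''. The paper instead uses Perelman's $\mathscr{W}$-entropy in Ni's static form. It is computed with a positive heat solution $u=(4\pi t)^{-n}e^{-f}$ on $(\mathscr{C},\omega)$, with $f$ comparable to $r^2/t$; this $u$ is built by Saloff-Coste's methods, and that growth is what legitimizes the integrations by parts. Then:
\begin{itemize}
\item $\mathscr{W}'(t)=-2t\int|\nabla^2 f-\frac{g}{2t}|^2u\,dV\le 0$, with equality iff $g$ is a cone metric.
\item The scaling $\mathscr{W}_\lambda(t)=\mathscr{W}(\lambda^2t)$ makes the limiting entropies as $\lambda\to0,\infty$ constant in $t$, so both tangent cones are Ricci-flat K\"ahler cones.
\item For a cone, $\mathscr{W}=\log(\mathrm{Vol}(L)/\mathrm{Vol}(S^{2n-1}))$.
\end{itemize}

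\emph{The identification of a tangent cone $\omega_0$ is also skipped.} $\omega_0$ is a cone with respect to its own radial function $r_0$. Nothing forces its Euler or Reeb field to be $r\partial_r$ or $\xi_{\mathscr{C}}$. So Bando--Mabuchi or Nitta--Sekiya uniqueness ``with fixed Reeb field'' cannot be applied directly. The paper first proves $\xi_0=J(r_0\partial_{r_0})$ equals $\xi_{\mathscr{C}}$:
\begin{itemize}
\item $\xi_0$ is holomorphic of linear growth, so by Hein--Sun it lies in $\mathfrak{p}\oplus J\mathfrak{p}$.
\item It has compact orbits, so it lies in $J\mathfrak{p}$.
\item Martelli--Sparks--Yau volume-minimization uniqueness then forces $\xi_0=\xi_{\mathscr{C}}$.
\item Only now does Nitta--Sekiya give $\omega_0=\Phi_0^*\omega_{\mathscr{C}}$.
\end{itemize}
Once both ends are identified separately, your worry about drift along the noncompact transverse automorphism group disappears. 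Both links are isometric to $L$, so $\mathscr{W}_0=\mathscr{W}_\infty$. Hence $\mathscr{W}$ is constant, $\omega$ itself is a cone, and the same identification applies to $\omega$.

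\emph{Your fallback does not close the gap.} A Simon-type three-annulus argument needs $\omega_\lambda$ to be $\epsilon$-close to the orbit $\{\Phi^*\omega_{\mathscr{C}}\}$ at every scale. The hypothesis only gives uniform equivalence with an arbitrary constant $C$, so there is no perturbative regime to start from. Moreover, your claimed description of degree-$0$ homogeneous harmonic $(1,1)$-forms as exactly the infinitesimal transverse automorphisms is itself a Hein--Sun-type statement you would have to prove.
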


\begin{remark}
\begin{enumerate}
    \item If $\omega_{\mathscr{C}}$ is not flat and if $\Phi$ is not a scaling map, $(r,\theta) \mapsto (\lambda r, \theta)$, then $\Phi^*\omega_{\mathscr{C}}$ is not parallel with respect to $\omega_{\mathscr{C}}$. The conifold (Example \ref{ex:CYcones}(2)) provides such an example. This shows that it may not be so simple to prove Theorem \ref{thm:JJKLiou} using classical derivative estimates as in Section \ref{sec:YauLTOld}.
    \item $\mathscr{C} = \mathbb{C}^n$ is allowed, so we get a third proof of Yau's Liouville Theorem \ref{cor:YauLT} after Sections \ref{sec:YauLTOld} and \ref{sec:YauLT}. However, during the proof we will need to use Theorem \ref{thm:EKclass} in a crucial way, which directly implies Theorem \ref{cor:YauLT} via scaling.
\end{enumerate}
\end{remark}

\begin{proof}[Sketch proof of Theorem \ref{thm:JJKLiou}]
We first reduce to the Ricci-flat case. We have that 
    \begin{equation*}
    0 = \mathrm{Scal}_\omega = {\rm Tr}_\omega({\rm Ric}_\omega - {\rm Ric}_{\omega_{\mathscr{C}}}) = - \Delta_\omega \log\left(\frac{\omega^n}{\omega^n_{\mathscr{C}}}\right) 
    \end{equation*}
    because $\omega_{\mathscr{C}}$ is Ricci-flat. Thus, the log volume ratio is a bounded entire $\omega$-harmonic function on $\mathscr{C}$. Moser's Harnack inequality, which holds on $(\mathscr{C},\omega_{\mathscr{C}})$, implies that bounded entire $\omega_{\mathscr{C}}$-harmonic functions on $\mathscr{C}$ are constant. However, the Harnack inequality is stable under uniform equivalence of metrics (for complete manifolds this is due to Saloff-Coste \cite{SaloffCoste}, and the proof carries over to our setting). Thus, the same result holds on $(\mathscr{C},\omega)$, so $\omega^n$ is a multiple of $\omega_{\mathscr{C}}^n$ and, hence, ${\rm Ric}_\omega = 0$.
    
    It is a well-known theme that, among manifolds of nonnegative Ricci curvature, cones are characterized by the constancy of a number of monotone quantities. For technical reasons, the most convenient quantity in our setting is Perelman's entropy \cite{Perelman}, as adapted to the static case by Ni \cite{LeiNi}. Perelman's monotonicity formula says that if $u>0$ is a solution to the heat equation on $(\mathscr{C},\omega)$,
    \begin{equation*}
    \int_{\mathscr{C}} u\, dV_{\omega} = 1, \quad u =(4\pi t)^{-n}e^{-f},
    \end{equation*} then the entropy, defined as  
    \begin{equation*} 
    \mathscr{W}(t) = \int_{\mathscr{C}}\left( t|\nabla_\omega f|_\omega^2 +f-2n\right)u\, dV_\omega, 
    \end{equation*}
    decreases. More precisely, one can show that 
    \begin{equation*} 
    \mathscr{W}'(t)= -2t\int_\mathscr{C} \left|\nabla^2_\omega f-\frac{g}{2t}\right|_\omega^2 u\, dV_\omega\leq 0.
    \end{equation*}
    Furthermore, this integral vanishes if and only if $g$ is a cone metric. It is a healthy {\bf exercise} to verify this formula (try this without looking up \cite{LeiNi} first). 

    \medskip

    A small caveat is again that we are working on an incomplete manifold, so some care is needed to justify integrating by parts etc. However, again using methods due to Saloff-Coste \cite{SaloffCoste}, we may construct such a solution $u$ with $f$ comparable to $r^2/t$ uniformly on $\mathscr{C} \times \mathbb{R}^+$. This behavior suffices to make everything rigorous. 
    
    Our intuition going forward is that $\mathscr{W}(t)$ interpolates between the entropies $\mathscr{W}_0$ of the tangent cone of $(\mathscr{C},\omega)$ at the apex and $\mathscr{W}_\infty$ of the tangent cone of $(\mathscr{C},\omega)$ at infinity. See Figure \ref{entropy} below for a visual. However, at this point, all we know for sure is that $\mathscr{W}(t)$ actually has a limit $\mathscr{W}_0, \mathscr{W}_\infty$ as $t \to 0,\infty$, respectively. This is due to the monotonicity of $\mathscr{W}$ and its boundedness, which follow from $f \sim r^2/t$.

    \begin{figure}[b]
    \sidecaption
    \includegraphics[width=0.645\linewidth]{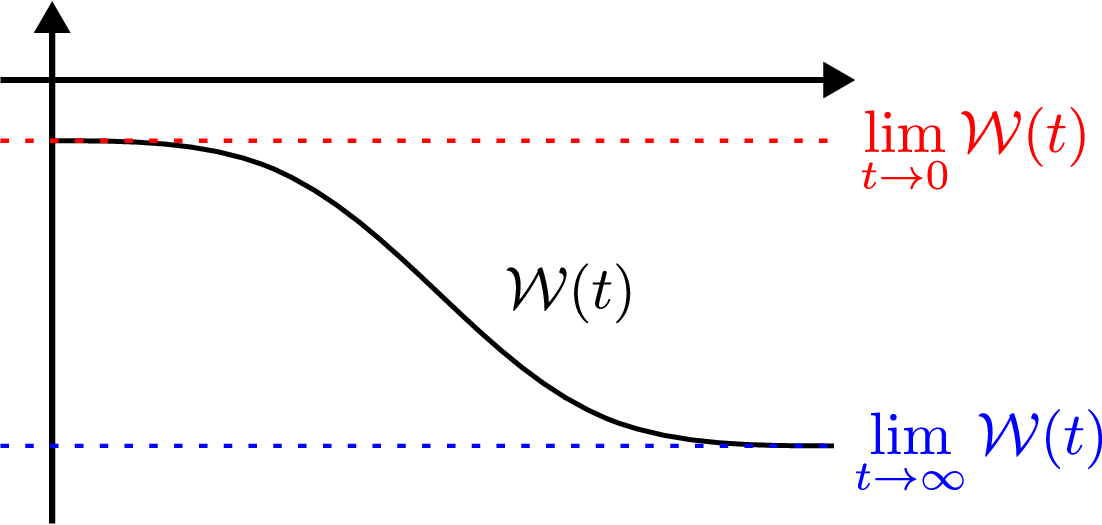}
    \caption{The entropy $\mathscr{W}(t)$.}
    \label{entropy}
    \end{figure}
    
  To proceed, for all $\lambda>0$, we define $\omega_\lambda = \lambda^{-2}\Phi_\lambda^*\omega$,
    where $\Phi_\lambda (r, \theta) = (\lambda r, \theta)$. Note that $\lambda^{-2}\Phi_\lambda^*\omega_\mathscr{C}=\omega_\mathscr{C}$, and hence we have the properties
    \begin{equation}\label{eq:saviors}
    \omega_{\lambda}^n = \omega_{\mathscr{C}}^n, \quad \frac1C\omega_\mathscr{C}\leq \omega_\lambda  \leq C \omega_\mathscr{C},
    \end{equation}
    uniformly in $\lambda$. Furthermore, we define
    \begin{equation*} 
    u_\lambda(x,t) = \lambda^{2n} u(\Phi_\lambda(x), \lambda^2t) 
    \end{equation*}
    for all $(x,t) \in \mathscr{C}\times\mathbb{R}^+$.
    Then $u_\lambda$ solves the heat equation with respect to $\omega_\lambda$, and
    $$\int_{\mathscr{C}} u_\lambda \, dV_{\omega_\lambda} = 1, \quad u_\lambda = (4\pi t)^{-n}e^{-f_\lambda},$$
    where $f_\lambda$ is comparable to $r^2/t$ uniformly on $\mathscr{C} \times \mathbb{R}^+$ as well as with respect to $\lambda$. We can again define an entropy $\mathscr{W}_\lambda$ of $(\mathscr{C},\omega_\lambda)$ using $u_\lambda$. Then, by construction,
    \begin{equation} \label{eq:new:savior}
    \mathscr{W}_\lambda(t) = \mathscr{W}(\lambda^2t).
    \end{equation}
    Taking $\lambda \to 0$ and $\lambda \to \infty$, we can first of all prove that the metrics $\omega_\lambda$ converge locally smoothly along a subsequence due to the properties \eqref{eq:saviors}, the local regularity of the complex Monge-Ampère equation (Theorem \ref{thm:EKclass} and Claim 2 in its proof) and the Arzelà-Ascoli theorem. The same holds for the heat equation solutions $u_\lambda$ thanks to the uniform control $f_\lambda \sim r^2/t$. Hence we obtain limit metrics $\omega_0,\omega_\infty$ satisfying \eqref{eq:saviors}, together with heat equation solutions $u_0,u_\infty$ satisfying $f_0,f_\infty \sim r^2/t$, and with associated entropy functions $\mathscr{W}_0(t), \mathscr{W}_\infty(t)$, which are {\bf constant} in $t$ due to \eqref{eq:new:savior}.

    \medskip
    
    \noindent \textbf{Goal.} Find transverse automorphisms $\Phi_{0, \infty}$ such that $\omega_{0, \infty} = \Phi^*_{0, \infty}\omega_{\mathscr{C}}$.

    \medskip Note that, because of the constancy of $\mathscr{W}_0(t), \mathscr{W}_\infty(t)$, we do know that $\omega_{0,\infty}$ are Ricci-flat Kähler {\bf cone} metrics on $(\mathscr{C},J)$, uniformly equivalent to $\omega_{\mathscr{C}}$. So our goal is exactly the special case of Theorem \ref{thm:JJKLiou} where $\omega$ is in addition assumed to be a cone metric. Thus, this goal is clearly necessary for proving the theorem. But it is also sufficient: The constancy of $\mathscr{W}_i(t)$ shows that $f_i = r_i^2/4t + c_i$ ($i = 0,\infty$), where $r_i$ denotes the radius function of the cone metric $\omega_i$ and $c_i \in \mathbb{R}$ is determined by the requirement that $\int_{\mathscr{C}} u_i \,dV_{\omega_i} = 1$. This in turn implies that
    \begin{equation*} 
    \mathscr{W}_{i}=\log\left(\frac{\mathrm{Vol}(L,g_{L,i})}{\mathrm{Vol}(S^{2n-1})}\right).
    \end{equation*} 
    But if we know that the cones $(\mathscr{C},\omega_0)$ and $(\mathscr{C},\omega_\infty)$ are isometric, then their links have the same volume, so $\mathscr{W}_0 = \mathscr{W}_\infty$, so $\mathscr{W}(t)$ is constant in $t$, so $\omega$ is a cone metric, and the same proof as for $\omega_0,\omega_\infty$ will show that $\omega = \Phi^*\omega_{\mathscr{C}}$, as desired.

    \medskip

    \noindent {\bf Proof of Goal.} We write out the argument only for $\omega_0$. Since this is a Kähler cone metric on $(\mathscr{C}, J)$, its Reeb vector field $\xi_{0} = J(r_0\partial_{r_0})$ is a holomorphic Killing field with respect to $\omega_0$. It is of linear growth with respect to $\omega_0$, so, due to the uniform equivalence of $\omega_0$ and $\omega_{\mathscr{C}}$, with respect to $\omega_{\mathscr{C}}$ as well. 
    A Liouville type theorem of Hein-Sun \cite{HeinSun} shows that, for any holomorphic vector field $\xi$ on $(\mathscr{C},J)$ which has linear growth with respect to $\omega_{\mathscr{C}}$, we must have that $\xi\in \mathfrak{p}\oplus J\mathfrak{p}$, where 
    \begin{equation*}
    \mathfrak{p}=\mathrm{Span}\left\{ \begin{array}{c}
        r\partial_r, \text{ gradients of }\xi_{\mathscr{C}}\text{-invariant }\\\Delta_{\omega_\mathscr C}\text{-harmonic functions that are }\\\text{homogeneous  of degree }2\text{ w.r.t. }r\partial_r
        \end{array}\right\}, \quad \xi_{{\mathscr{C}}} = J(r \partial_r).
    \end{equation*} 
    Furthermore, $\mathfrak{p}\oplus J\mathfrak{p}$ is the Lie algebra of the transverse automorphism group. Since $\xi_0$ is a Killing field with respect to $\omega_0$, hence only has compact orbits, one can also show that $\xi_0 \in J\mathfrak{p}$. Then the uniqueness theorem of Martelli-Sparks-Yau \cite{marinellisparksyau} implies that $\xi_0 = \xi_{{\mathscr{C}}}$ because in the Lie algebra $J\mathfrak{p}$ of transverse isometries of $\omega_{\mathscr{C}}$, the Reeb vector field $\xi_{{\mathscr{C}}}$ is the only Reeb vector field of a Calabi-Yau cone metric on $(\mathscr{C},J)$. Then, lastly, the Sasakian version due to Nitta-Sekiya \cite{nittasekiya} of the Bando-Mabuchi uniqueness theorem for Kähler-Einstein metrics \cite{bandomabuchi} implies that $\omega_0 = \Phi_0^*\omega_\mathscr{C}$.
\end{proof}

\begin{remark}
\begin{enumerate}
\item Chen-Wang \cite{chen2014longtimebehaviourconical} proved a Yau type Liouville theorem over divisorial cones $\mathbb{C}^{n-1} \times \mathbb{C}_\beta$. This was also done by passing to the tangent cones at an apex and at infinity. However, because the tangent cones are flat in this case, classical direct estimates are sufficient for their proof. 
\item Conceptually, all ingredients of the proof of Theorem \ref{thm:JJKLiou} (except for the reduction from ${\rm Scal}_\omega=0$ to ${\rm Ric}_\omega=0$) can be found in Donaldson-Sun theory \cite{DS, HeinSun}; only their implementation is different. The point of Theorem \ref{thm:JJKLiou} is to demonstrate that the assumption $\frac{1}{C}\omega_{\mathscr{C}} \leq \omega \leq C\omega_{\mathscr{C}}$, i.e. the $C^2$ estimate in the theory of the complex Monge-Ampère equation, is sufficient to exorcize all considerations of Gromov-Hausdorff limits, Cheeger-Colding theory and $K$-stability of valuations from the Donaldson-Sun theory (in our situation). Instead, only the elementary $C^\infty$ differential-geometric versions of these ideas are required.
\end{enumerate}
\end{remark}

\subsection{From Liouville to Evans-Krylov on cones}

The final step in the proof of Theorem \ref{thm:JJK} is to define a suitable H\"older seminorm, and to prove it is finite (and actually bounded by $C$ and $D$) via blowup and contradiction to Theorem \ref{thm:JJKLiou}. This is the technical heart of Klemmensen's paper \cite{klemmensen}. It would be too complicated to go into the details. We only mention that the definition of a suitable $C^\alpha$ type seminorm is one of the most important points, and is inspired by the equivalent reformulation of the standard Euclidean $C^\alpha$ seminorm as a polynomial approximation seminorm in Krylov's book \cite[Section 3.3]{Krylov1996LecturesOE}. More concretely, for $\rho, \nu \in \mathbb{R}^+$ and $x \in B_1(0) \subset \mathscr{C}$ define
$$B_{\rho,\nu}(x) := (B_\rho(x) \cap B_1(0))\setminus B_\nu(x) \subset \mathscr{C},$$
where all balls are defined using $\omega_{\mathscr{C}}$. Then define
\begin{align*}
[\omega]_{\alpha,f} :=\sup_{\rho,\nu \in \mathbb{R}^+} \sup_{x \in B_1(0)} \frac{f({\rm dist}_{\omega_{\mathscr{C}}}(0,B_{\rho,\nu}(x)))}{\rho^\alpha}\inf_{\pi \in \Sigma_{3C}}\inf_{\eta \in \Sigma_{\rm loc}} \sup_{B_{\rho,\nu}(x)}|\omega-\pi-\eta|_{\omega_{\mathscr{C}}}.
\end{align*}
Here, $f: [0,1] \to [0,1]$ is a weight function depending on a parameter $\delta \in (0,1)$ which vanishes on $[0,\delta]$ and converges to $1$ as $\delta \to 0$. This weight ensures that the seminorm is finite. Then the contradiction argument proves boundedness in terms of $C,D$ of this seminorm, uniformly as $\delta \to 0$. Moreover,
$$\Sigma_{3C} = \{\pi = \Phi^*\omega_{\mathscr{C}}: \Phi\;\,\text{transverse automorphism}, \;\,\frac{1}{3C}\omega_{\mathscr{C}} \leq \pi \leq 3C \omega_{\mathscr{C}}\}.$$
Lastly, $\Sigma_{\rm loc}$ is a set of local $(1,1)$-forms comprising all constant $(1,1)$-forms in the charts of a sufficiently fine Whitney covering of $\mathscr{C}$ by holomorphic coordinate balls. If $B_{\rho,\nu}(x)$ is not contained in any ball of the covering, only $\eta = 0$ is allowed.

\section{Open problems}

\begin{enumerate}
    \item Consider a Bedford-Taylor weak solution to $\det(u_{j\bar k}) = 1$ on a ball $B \subset \mathbb{C}^n$. Is there always a point $z \in B$ such that $u$ is smooth in a neighborhood of $z$?
    \item Can the Liouville type Theorem \ref{thm:HJH} or the Evans-Krylov type Theorem \ref{thm:HT1} be proved using classical $C^3$ computations after all? (This seems unlikely for Klemmensen’s results because their statements are too different from the classical case.)
    \item Can the higher-order asymptotics of \cite{HeinTosatti,HeinLeeTosatti} (e.g. Theorem \ref{thm:HT2}) be proved using microlocal analysis if the Evans-Krylov type Theorem \ref{thm:HT1} is assumed? How about the higher-order \textit{estimates} (rather than just asymptotics) of \cite{HeinTosatti,HeinLeeTosatti}?
    \item Do immortal solutions to normalized K\"ahler-Ricci flow $\partial\omega/\partial t = -\text{Ric}(\omega) - \omega$ on compact Kähler manifolds have globally bounded Ricci curvature (not just away from the singular Iitaka fibers, as in Theorem \ref{thm:HLT})? At least on surfaces?
    \item Does Klemmensen’s Theorem \ref{thm:JJK} still hold on cones that are only scalar-flat rather than Ricci-flat?
    \item Can the $C^2$ estimate assumption in Klemmensen’s Theorem \ref{thm:JJK} be verified without using any advanced tools from Donaldson-Sun theory?
    \item Is something like Yau’s Liouville Theorem \ref{cor:YauLT} true on any noncompact Calabi-Yau manifolds other than cylinders or cones? A very recent result is \cite{Oussama}.
    \item What about \textit{cusps} as singularity models rather than cylinders or cones? A good example are negative K\"ahler-Einstein metrics on line bundles over $K3$ surfaces, which have unbounded curvature. See \cite[Section 2]{fu2021asymptoticskahlereinsteinmetricscomplex} for details.
\end{enumerate}

\section{Solutions to the exercises}\label{sec:solutions}

\subsection*{Exercise 1: Weak solutions of the complex Monge-Amp\`ere equation}

We first recall the definition of a weakly psh function on a domain $V \subset \mathbb{C}^n$.

\begin{definition}
We define $\mathrm{PSH}(V)$ as the class of all upper semicontinuous functions $u: V \to \mathbb{R} \cup \{-\infty\}$ satisfying one of the following two equivalent conditions:
\begin{enumerate}
    \item For all complex lines $\ell \subset \mathbb{C}^n$, the restriction $u|_{\ell \cap V}$ is weakly subharmonic, i.e. it satisfies the sub-mean inequality on all closed disks contained in $\ell \cap V$.
    \item Either $u \equiv -\infty$, or $u$ is locally integrable and the distributional derivative $dd^c u$, which defines a closed current, is in addition \emph{positive}, i.e. $\int dd^c u \wedge \eta \geq 0$ for all test forms $\eta$ that are \emph{strongly positive}, i.e. for every $z \in V$ we can write
$$\eta|_z = \sum_j \bigwedge_k \sqrt{-1}\alpha_{j,k} \wedge \overline{\alpha_{j,k}}, \quad \alpha_{j,k}\in\Lambda^{1,0}(\mathbb{C}^n).$$
\end{enumerate}
\end{definition}

\vspace{-1mm}

\noindent Let us prove that $u(z) = (1 + |z_1|^2)|z_2| \in \mathrm{PSH}(B)$ for $B = B_1(0) \subset \mathbb{C}^2$. By direct computation, away from $\{z_2=0\}$ the complex Hessian of $u$ is given by
\begin{equation}\label{eq:hessian:of:example}
    (u_{j\bar k})=\begin{pmatrix}
    |z_2|&\dfrac{\bar{z}_1 z_2}{2|z_2|}\\[0,3cm]
    \dfrac{z_1\bar z_2}{2|z_2|}&\dfrac{1+|z_1|^2}{4|z_2|}
\end{pmatrix}.
\end{equation}
This is nonnegative definite. For a smooth function this property is equivalent to the function being psh. Now we apply the following extension lemma:

\begin{lemma}\label{lem:ext}
    Let $F=\{\varphi=-\infty\}$ for some $\varphi\in \mathrm{PSH}(V)$. Assume that $u\in \mathrm{PSH}(V\setminus F)$ is bounded. Then $u^*(z) := \limsup_{y\to z} u(y)$ belongs to $\mathrm{PSH}(V)$. 
\end{lemma}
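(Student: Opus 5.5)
The plan is to perturb $u$ by a small multiple of $\varphi$ so that it becomes psh across $F$, and then remove the perturbation by a limiting argument. The problem is local, so fix a point $a\in F$ and a small ball $W\Subset V$ around it on which $\varphi<0$; this is possible because $\varphi$ is upper semicontinuous and hence locally bounded above, so after subtracting a constant we may assume $\varphi<0$ on $W$. Let $M$ be a bound with $|u|\le M$.

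First we record two facts about $F$. It is pluripolar, and since $\varphi\not\equiv-\infty$ on connected components (otherwise there is nothing to prove, or we restrict attention to components), $\varphi\in L^1_{\rm loc}$. Hence $F$ has Lebesgue measure zero, so $V\setminus F$ is dense. In particular $u^*$ is well defined on all of $V$, is upper semicontinuous, takes values in $[-M,M]$, and agrees with $u$ on $V\setminus F$ because $u$ is already usc there.

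Next, for $\varepsilon>0$ define $v_\varepsilon := u+\varepsilon\varphi$ on $W\setminus F$ and $v_\varepsilon := -\infty$ on $F\cap W$. Near points of $W\setminus F$, $v_\varepsilon$ is a sum of psh functions and so is psh. It is usc at points of $F$ because $v_\varepsilon\le M+\varepsilon\varphi\to-\infty$ as we approach $F$. To get the sub-mean value property on a disk $\Delta\subset\ell\cap W$ centred at $z\in F$, note that $v_\varepsilon(z)=-\infty$, so the inequality holds trivially. At centres $z\notin F$, the disk may meet $F$; here I would use the standard gluing criterion, namely that a usc function which is psh near every point where it is $>-\infty$ and $-\infty$ on a closed set is psh. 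This can be checked via condition 1 of the definition: the restriction to a complex line is subharmonic off the polar set where it equals $-\infty$, and a usc function that is subharmonic off a closed set on which it equals $-\infty$ is subharmonic. This last step uses the one-variable fact that subharmonicity is local and that the value $-\infty$ never violates the sub-mean inequality; more precisely, one verifies the maximum-principle characterization against harmonic majorants on the open set where the function is finite. So each $v_\varepsilon$ belongs to $\mathrm{PSH}(W)$.

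Finally let $\varepsilon\downarrow0$. The family $v_\varepsilon$ is increasing as $\varepsilon\downarrow0$ because $\varphi<0$, and it is bounded above by $M$. Its limit is $u$ on $W\setminus(F\cup\{\varphi=-\infty\})=W\setminus F$, and $-\infty$ on $F$. By the standard theorem (Lelong), the usc regularization $(\sup_\varepsilon v_\varepsilon)^*$ is psh. Since $F$ has empty interior, this regularization equals $\limsup_{y\to z,\,y\notin F}u(y)=u^*(z)$ at points of $F$, and equals $u$ elsewhere. Therefore $u^*\in\mathrm{PSH}(W)$, and since $a$ was arbitrary, $u^*\in\mathrm{PSH}(V)$.

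I expect the main obstacle to be the gluing step showing $v_\varepsilon$ is psh across $F$, in particular the one-variable lemma that subharmonicity extends over the closed set where the function equals $-\infty$. For that lemma I would first try the argument by comparison with harmonic functions on subdisks.
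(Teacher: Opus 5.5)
The paper states this lemma without proof (it is quoted as a standard removable-singularity fact and applied only with $\varphi=\log|z_2|$), so there is no argument in the text to compare yours against. Your proof is the standard one and it is correct: you perturb to $v_\varepsilon=u+\varepsilon\varphi$, extend by $-\infty$ across $F$, and recover $u^*$ as the upper semicontinuous regularization of the increasing limit as $\varepsilon\downarrow 0$ (using Lelong's theorem on upper envelopes).

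A few small remarks:
\begin{enumerate}
\item The statement tacitly requires $F$ to be closed in $V$, since otherwise $\mathrm{PSH}(V\setminus F)$ is not defined. Your gluing step uses exactly this, and it holds in the paper's application.
\item The gluing step you flag as the main obstacle is routine. Take the local sub-mean inequality as the definition of subharmonicity; it is equivalent to the global one by the maximum principle. Then the points where $v_\varepsilon=-\infty$ impose no condition at all.
\item Only upper boundedness of $u$ near $F$ is used.
\item The density of $V\setminus F$ is not actually needed to identify $\bigl(\sup_\varepsilon v_\varepsilon\bigr)^*$ with $u^*$, because $\sup_{B_r(z)}\sup_\varepsilon v_\varepsilon=\sup_{B_r(z)\setminus F}u$ holds in any case.
\end{enumerate}
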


\noindent In our case we can use $\varphi(z) = \log |z_2|$. Moreover, $u^* = u$ because $u \in C^0(B)$.

\vspace{1mm}

\begin{definition}
    For $u \in L^\infty_{\rm loc}(V) \cap {\rm PSH}(V)$ we can define the complex Monge-Amp\`ere operator $(dd^c u)^n$ in two equivalent ways:
    \begin{enumerate}
        \item If a sequence $u_i \in C^\infty_{\rm loc}(V)\cap \mathrm{PSH}(V)$ decreases pointwise to $u$, then the $(dd^c u_i)^n$ converge weakly as Radon measures and we define $(dd^c u)^n$ to be their limit.
        \item For $k = 2, \ldots, n$, if $(dd^c u)^{k-1}$ is already defined as a current, and is known to be closed and positive, we define $(dd^c u)^k$ as a current via $$\int (dd^c u)^k \wedge \eta = \int u \cdot (dd^c u)^{k-1} \wedge dd^c \eta$$ for all test forms $\eta$. This makes sense because, as a positive current, $(dd^c u)^{k-1}$ is a differential form whose coefficients are complex Radon measures, hence can be multiplied by $L^\infty_{\rm loc}$ functions. And $(dd^c u)^k$ is again closed and positive.
    \end{enumerate}
\end{definition}

\noindent We apply this to our example:

\begin{enumerate}
    \item To proceed by approximation, let
$u_\varepsilon(z)=(1+|z_1|^2)(|z_2|^2+\varepsilon)^{1/2}$.
Then
\begin{align*}
(u_{\varepsilon,j\bar{k}}) = \begin{pmatrix} (|z_2|^2+\varepsilon)^{1/2} & \dfrac{\bar z_1 z_2}{2(|z_2|^2+\varepsilon)^{1/2}} \\[0,4cm]
\dfrac{z_1\bar z_2}{2(|z_2|^2+\varepsilon)^{1/2}} & \dfrac{(1+|z_1|^2)(|z_2|^2+2\varepsilon)}{4(|z_2|^2+\varepsilon)^{3/2}} \end{pmatrix},
\end{align*}
which is positive definite with
$$\det(u_{\varepsilon,j\bar{k}}) =\frac{
|z_2|^2+2\varepsilon(1+|z_1|^2)
}
{
4\bigl(|z_2|^2+\varepsilon\bigr)
}.
$$
As $\varepsilon \to 0$, this function converges to $1/4$ pointwise on $B \setminus \{z_2 = 0\}$. Also,
$$|{\det(u_{\varepsilon,j\bar{k}})}|\leq \frac{1}{4} + \frac{1}{2}(1 + |z_1|^2)$$
for all $\varepsilon > 0$. Thus, by the dominated convergence theorem,
$$\int \eta (dd^c u_\varepsilon)^2 = \int 8\eta \,{\det(u_{\varepsilon,j\bar{k}})} \,dV_{\rm Euc} \to \int 2\eta\,dV_{\rm Euc}$$
for all continuous functions $\eta$ with compact support in $B$. So $(dd^c u)^2 = 2 dV_{\rm Euc}$ as Radon measures, or equivalently $\det(u_{j\bar{k}}) = 1/4$ in the weak sense.
\item We may also compute $(dd^c u)^2$ in the sense of currents. Fix a smooth function $\eta$ with compact support in $B$. By definition,
$$\int \eta (dd^c u)^2 = \int u \cdot dd^c u \wedge dd^c \eta.$$
By a quick calculation, \eqref{eq:hessian:of:example} holds in the sense of distributions on all of $B$, so the coefficients of $dd^c u$ are in fact $L^1_{\rm loc}$ functions. Thus,
\begin{align*}\int \eta (dd^c u)^2 = \lim_{\varepsilon \to 0} \int_{|z_2| \geq \varepsilon} u \cdot 4(u_{1\bar{1}}\eta_{2\bar{2}} + u_{2\bar{2}}\eta_{1\bar{1}} - u_{1\bar{2}}\eta_{2\bar{1}} - u_{2\bar{1}}\eta_{1\bar{2}})\,dV_{\rm Euc}.
\end{align*}
We apply the divergence theorem twice to remove the derivatives from $\eta$. This yields the expected volume integral $\int 2\eta\,dV_{\rm Euc}$, plus a possible residue
\begin{align*}
 \lim_{\varepsilon \to 0} \frac{2}{\varepsilon}\int_{|z_2|=\varepsilon} {\rm Re}[(u_2 u_{1\bar{1}} - u_1u_{2\bar{1}})\eta z_2 - (\eta_{\bar{2}}u_{1\bar{1}} - \eta_{\bar{1}}u_{1\bar{2}})u\bar{z}_2 ] \,dA_{\rm Euc}.\end{align*}
This vanishes by \eqref{eq:hessian:of:example}. Only a term $u_2 u_{2\bar{2}}$ could have caused trouble.
\end{enumerate}

\noindent Lastly, we note that there is also an abstract way of computing $(dd^c u)^2$: as a Radon measure, the Monge-Ampère operator of a bounded psh function never charges any pluripolar sets $\{\varphi=-\infty\}$ as in Lemma \ref{lem:ext}.
Thus, $(dd^c u)^2$ is simply the pushforward of $2dV_{\rm Euc}$ from $B\setminus \{z_2 = 0\}$ to $B$ under the inclusion, which is again $2dV_{\rm Euc}$.

\medskip

\noindent {\bf Geometric interpretation.} We continue our analysis of Blocki's \cite{blocki1999regularity} weak solution $u(z) = (1+|z_1|^2)|z_2|$ to the complex Monge-Ampère equation. As it turns out, we can interpret the singular Calabi-Yau metric $\omega_u = dd^c u$ on $\mathbb{C}^2$ as the pullback of a flat Kähler metric on $\mathbb{C}^2/\{\pm {\rm Id}\}$ under the crepant resolution of this singularity. To see this, we first recall the well-known isomorphism
\begin{align*}
\mathbb{C}^2/\{\pm {\rm Id}\} \to \mathscr{X} = \{x_1x_2 = x_3^2\} \subset \mathbb{C}^3, \quad [(w_1,w_2)] \mapsto (w_1^2,w_2^2,w_1w_2).
\end{align*}
Then we consider the diagram
\begin{align*} 
\mathscr{Y}=\overline{\pi^{-1}(\mathscr{X}\setminus \{0\})}&\;\,\subset\;\,  \mathrm{Bl}_0(\mathbb{C}^3)\\
\downarrow\quad&\qquad\;\;\; \downarrow\pi\\
\mathscr{X}\;\;\;&\;\,\subset\quad\;\mathbb{C}^3.
\end{align*}
We can show that $\mathscr{Y} \cong \mathscr{O}_{\mathbb{P}^1}(-2) = K_{\mathbb{P}^1}$, which implies that $K_{\mathscr{Y}}$ is trivial and hence that the resolution is crepant. To see the isomorphism (and that it maps $\mathscr{Y} \cap \pi^{-1}(0)$ to the zero section of the line bundle), notice that
\begin{align*}\mathscr{Y} = {\rm Bl}_0(\mathscr{X}) &\cong {\rm Bl}_0(\mathbb{C}^2/\{\pm{\rm Id}\}) \\
&= {\rm Bl}_0(\mathbb{C}^2)/\{\pm{\rm Id}\} = \mathscr{O}_{\mathbb{P}^1}(-1)/\{\pm {\rm Id}\} = \mathscr{O}_{\mathbb{P}^1}(-2).\end{align*}

To verify our claim about the solution $u$, we start with the potential $|w_1|^2 + |w_2|^2 $ of a flat Kähler metric on $\mathbb{C}^2$. This clearly pushes forward to the function $|x_1| + |x_2|$ on $\mathscr{X}$. We can parametrize an open subset of $\mathscr{Y}$ by coordinates $(z_1,z_2)$ via 
$$(z_1,z_2) \mapsto ([z_1^2: 1: z_1],(z_1^2z_2, z_2, z_1z_2)) \in {\rm Bl}_0(\mathbb{C}^3) \subset \mathbb{P}^2 \times \mathbb{C}^3.$$
Here we have realized ${\rm Bl}_0(\mathbb{C}^3)$ as the variety $\{(\ell, p) \in \mathbb{P}^2 \times \mathbb{C}^3: p \in \ell\}$ and $\pi$ is the projection onto the second factor. Thus, in our chart $(z_1,z_2)$, the exceptional divisor $\mathscr{Y} \cap \pi^{-1}(0)$ is cut out by $z_2 = 0$. Moreover, the flat Kähler potential $|x_1| + |x_2|$ on $\mathscr{X}$ lifts to the Blocki example $u(z) = (1 + |z_1|^2)|z_2|$ on $\mathscr{Y}$, as claimed.

\subsection*{Exercise 2: Details of the classical \texorpdfstring{$C^2$}{C^2} and \texorpdfstring{$C^3$}{C^3} estimates} 

\noindent (i) Fix $p \in B$. Since $H = (u_{j\bar{k}}(p))$ is a positive definite Hermitian matrix, 
    by the spectral theorem there exists a unitary matrix $Q$ such that 
    \begin{equation*} 
    Q^\top H \bar{Q} = D = \mathrm{Diag}(\lambda_1, \ldots, \lambda_n), \quad \lambda_1,\ldots,\lambda_n > 0.
    \end{equation*}
    Write $z=Qw$. By the chain rule, we see
    \begin{align} \label{eq:trf}
    \begin{split}\frac{\partial u}{\partial \bar w_k}&=\frac{\partial \bar z_m}{\partial \bar w_k} \frac{\partial u}{\partial \bar z_m},\\
    \quad \frac{\partial^2 u}{\partial w_j \partial \bar w_k} &= \frac{\partial z_\ell}{\partial w_j} \frac{\partial^2 u}{\partial z_\ell \partial \bar z_m} \frac{\partial \bar z_m}{\partial \bar w_k} = (Q^\top H \bar Q)_{j\bar k} = \lambda_j \delta_{j\bar{k}}. 
    \end{split}\end{align}
    However, notice that the rotation $Q$ depends on the chosen point $p$. This is why, in the $C^2$ estimate calculation, we consider the rotationally invariant quantity
    $$\Delta u(z) = {\rm Tr}(H) = {\rm Tr}(\bar{Q} D\bar{Q}^{-1}) = {\rm Tr}(D) = \Delta u(w)$$ instead of any individual second derivative $u_{\ell \bar{\ell}}$.

\medskip

     \noindent (ii)
    Consider two local holomorphic coordinate systems $z_i$ and $w_\alpha$ on $\mathbb{C}^n$. Recall the formula for the Christoffel symbols of a Kähler metric:
    \begin{equation*}
    \Gamma_{jk}^i = \frac{\partial g_{j\bar \ell}}{\partial z_k}g^{\bar \ell i}.
\end{equation*}
Using this and the metric transformation laws
    $$g_{\alpha\bar\beta}= \frac{\partial z_i}{\partial w_\alpha } g_{i\bar{j}} \frac{\partial \bar{z}_j}{\partial \bar{w}_\beta }, \quad g^{\bar\alpha \beta} = \frac{\partial \bar{w}_\alpha}{\partial \bar{z}_i} g^{\bar{i}j}\frac{\partial w_\beta}{\partial z_j},$$
    we can check, with a little bit of work but without surprises, that
    \begin{equation} \label{eq:Gamma:trafo}
    \Gamma^\alpha_{\beta\gamma} = \frac{\partial w_\alpha}{\partial z_k}\frac{\partial z_i}{\partial w_\beta}\frac{\partial z_j}{\partial w_\gamma}\Gamma^k_{ij} + \frac{\partial w_\alpha}{\partial z_\ell}\frac{\partial^2 z_\ell}{\partial w_\beta \partial w_\gamma}.
    \end{equation}
    Thus, for any two Kähler metrics $\omega, \tilde{\omega}$ on $\mathbb{C}^n$, the differential operator
    \begin{equation*}
    \nabla^{\rm LC}_{\omega}-\nabla^{\rm LC}_{\tilde\omega}: \Gamma(T^{1,0}\mathbb{C}^n) \times \Gamma(T^{1,0}\mathbb{C}^n) \to \Gamma(T^{1,0}\mathbb{C}^n)  
    \end{equation*}
    transforms like a $(1,2)$-tensor under holomorphic coordinate changes: the Hessian terms in \eqref{eq:Gamma:trafo} are the same for $\omega$ and $\tilde\omega$, so they cancel out in the difference.
   
    \medskip
    
    \noindent (iii), (iv) These are standard facts about Bochner formulas but you may not have seen them. A good reference for this, particularly with a view towards applications in Riemanian geometry, is \cite[Chapters 1.3, 3.3, 4.6, 5.4 and 6]{Ballmann}. The notations and facts that we require are as follows. Let $(M,g)$ be a Riemannian manifold.
    \begin{enumerate}
    \item For a tensor $T \in \Gamma(T^{p,q}M)$ we have its covariant derivative $\nabla T \in \Gamma(T^{p,q+1}M)$ defined in the standard way using the Levi-Civita connection of $g$. We also have the formal $L^2$ adjoint of the operator $\nabla$ given by
    \begin{equation*} 
    \nabla^* T = -  {\rm Tr}_{12}(\nabla T) = -\sum_i (\nabla_{E_i} T)(E_i^\flat,\dots) \in \Gamma(T^{p-1,q}M)
    \end{equation*}
    for any orthonormal frame $E_i$ and its dual coframe $E_i^\flat$. One often calls $-\nabla^*T$ the \emph{divergence} of $T$, particularly when $T$ is a vector field. On the other hand, when $T=\alpha$ is an alternating $q$-form ($p=0$), then $\nabla^*T$ coincides with \begin{equation*} d^* \alpha = (-1)^{nq+1}\star d\star \alpha, \quad \alpha\wedge \star \beta =\left\langle \alpha,\beta \right\rangle dV, \quad n = \dim M.\end{equation*} 
    
    \item For $T \in \Gamma(T^{p,q}M)$ we define the \emph{rough Laplacian} $\Delta T\in\Gamma(T^{p,q}M)$ to be
    \begin{equation} \label{eq:dumbo}
    \Delta T = -\nabla^*\nabla T = \mathrm{Tr}_{12}(\nabla^2 T) = \sum_i (\nabla_{E_i} (\nabla_{E_i} T)-\nabla_{\nabla_{E_i} E_i}T). 
    \end{equation}
    This immediately specializes to the standard Laplacian of the components of $T$ when $M$ is flat Euclidean space.
    It is also straightforward to prove that \begin{equation*} 
    \frac{1}{2}\Delta|T|^2= |\nabla T|^2 +\left\langle \Delta T,T \right\rangle.
    \end{equation*}

    \item Bochner(-Weitzenböck) formulas relate operators such as the \emph{Laplace-Beltrami operator} $\Delta_d = dd^* + d^*d$ on forms, or $\Delta_\partial, \Delta_{\overline\partial}$ in the Kähler case, to the rough Laplacian $\Delta$. In fact, such formulas explain why $\Delta_d$ etc. are called Laplacians in the first place. The formulas required in this course are:
    \begin{itemize}
    \item[a)] $\Delta_d\alpha = \nabla^*\nabla \alpha + {\rm Ric}(\alpha)$ holds for all $1$-forms $\alpha$, where ${\rm Ric}(\alpha)^\sharp = {\rm Ric}(\alpha^\sharp)$. If $\alpha = dh$ for a holomorphic function $h$ on a Kähler manifold, then 
    $$\Delta_d \alpha = d\Delta_d h = d(2 \Delta_{\overline\partial}h) = d(2\overline\partial^*\overline\partial h) = 0.$$
    Thus, in particular, if ${\rm Ric} \geq 0$, then $\Delta |\alpha|^2 \geq 2|\nabla \alpha|^2$ by \eqref{eq:dumbo}.
    \item[b)] $2\overline\partial^*\overline\partial\xi = \nabla^*\nabla\xi - {\rm Ric}(\xi)$ holds for all $(1,0)$-vector fields $\xi$ in the Kähler case. Thus, if $\xi$ is holomorphic and if ${\rm Ric} \leq 0$, then $\Delta|\xi|^2 \geq 2|\nabla \xi|^2$.
    \end{itemize}
    \end{enumerate}

    \noindent (v) At this point we are switching from the standard linear coordinates on $\mathbb{C}^n$ to holomorphic normal coordinates for the Kähler metric $\omega_u$, but we keep using the same indices for both sets of coordinates. So we need to check that the $C^2$ estimate computation \eqref{eq:C2:est} is still valid. This follows from the holomorphic transformation law \eqref{eq:trf} of the complex Hessian: in the new coordinates, we still have that
    $$\frac{\partial^2}{\partial z_\ell \partial \bar{z}_m} \log \det(u_{j\bar{k}}) = \frac{\partial^2}{\partial z_\ell \partial \bar{z}_m} \log |{\det DF}|^2 = 0,$$ where $F$ is the holomorphic transition map.

\subsection*{Exercise 3: Different faces of the Taub-NUT metric} 

Let $z_1, z_2$ be the standard complex coordinates on $\mathbb{C}^2$ and define the ansatz \begin{equation*} u(z_1,z_2) = (a^2+a^4)+(b^2+b^4),\end{equation*} where $a,b$ are determined by the equations $$|z_1|=ae^{a^2-b^2},\;\,|z_2|=be^{b^2-a^2}.$$
Our goal is to show that $\omega = i\partial \bar\partial u$ defines a Ricci-flat Kähler metric on $\mathbb{C}^2$ which is not uniformly comparable to $\omega_{\mathrm{Euc}}$.

Consider the real polar coordinates given by 
\begin{align*}
x_1 &=  \log|z_1| = \log a + a^2-b^2,\\
x_2 &= \log|z_2| = \log b + b^2-a^2.
\end{align*}
\noindent Since $\partial_{z_j}x_k=\delta_{jk}/(2z_j)$, we have $u_{z_j\bar z_k}=u_{x_jx_k}/(4z_j\bar z_k)$. Thus, the problem boils down to calculating the real Hessian of $u$ with respect to the coordinates $(x_1, x_2)$.
Substituting the Jacobian of $a,b$ with respect to $x_1,x_2$ and the expression of $u$ given by $a,b$, by straightforward computation we get that the real Hessian $(u_{x_ix_j})$ is
$$
\frac{4}{1+2(a^2+b^2)}
\begin{pmatrix}
a^2((1+2b^2)^2+4a^2b^2)&4a^2b^2(1+a^2+b^2)\\
4a^2b^2(1+a^2+b^2)&b^2((1+2 a^2)^2+4a^2b^2)
\end{pmatrix}.
$$
Thus, $\det(u_{x_jx_k})=16a^2b^2$ and $\det(u_{z_j\bar z_k})=1$. Since $u_{z_1\bar{z}_1} > 0$ and $\det(u_{z_j \bar{z}_k}) > 0$, 
we conclude that $\omega$ is a Kähler metric.

We now show that $\omega$ is not comparable to $\omega_{\mathbb C^2}$. The complex Hessian is
\begin{equation*}
\frac{1}{1+2(a^2+b^2)}
\begin{pmatrix}
e^{2(b^2-a^2)}((1+2b)^2+4a^2 b^2)&4 \bar{z}_1 z_2 (1+a^2+b^2)\\
4 \bar{z}_2 z_1 (1+a^2+b^2)&e^{2(a^2-b^2)}((1+2a)^2+4a^2 b^2)
\end{pmatrix}.
\end{equation*}
Let $\lambda_1,\lambda_2$ denote its eigenvalues. Then clearly
\begin{equation*}
\lambda_1 + \lambda_2=e^{2(b^2-a^2)} \frac{(1+2b^2)^2+4a^2b^2}{1+2(a^2+b^2)} + e^{2(a^2-b^2)} \frac{(1+2a^2)^2+4a^2b^2}{1+2(a^2+b^2)}.
\end{equation*}
If we fix $a=1$ and send $b\to\infty$, then
$\mathrm{Tr}_{\omega_{\rm Euc}}(\omega)\to \infty$. Thus, the metric $\omega$ is not equivalent to the Euclidean metric, nor can it be given by $A^*\omega_{\mathrm{Euc}}$, $A \in {\rm GL}(2,\mathbb{C})$.

The previous computations did not give much geometric information. However, we also have an $S^1$-symmetry, which we can use to recover the Gibbons-Hawking description of the Taub-NUT metric in \cite{GibbonsHawking,lebrun}. The action is given by
$$e^{it}\cdot(z_1,z_2)=(e^{it}z_1,e^{-it}z_2).$$
This action preserves both $\omega$ and the holomorphic symplectic form $\Omega=dz_1\wedge dz_2$. By our previous computation, $\omega^2 = \Omega\wedge \overline{\Omega}$, and from this one can directly check that $\omega$, $\mathrm{Re}\, \Omega$, $\mathrm{Im}\, \Omega$ satisfy the hyper-K\"ahler triple condition. We denote them by $\omega_1,\omega_2,\omega_3$ and define the complex structure $I_a$ by $\omega_a(X,Y)=g(I_aX,Y)$ for $a = 1,2,3$. 

The real infinitesimal generator of this $S^1$-action is
$$
\xi|_{(z_1,z_2)}=\left.\frac{d}{dt}\right|_{t=0}(e^{it} \cdot (z_1,z_2)) = i( z_1\partial_{z_1}-z_2\partial_{z_2} -\bar z_1\partial_{\bar z_1} +\bar z_2\partial_{\bar z_2})
$$
leading to three real moment-map components $\mu_a$ with respect to $\omega_a$. We package them as a real and a complex function 
$\mu_{\mathbb{R}} = \mu_1$, $\mu_{\mathbb{C}} = \mu_2 +i \mu_3$ on $\mathbb{C}^2$ satisfying
\begin{equation*}
d\mu_{\mathbb{R}} = -\xi \lrcorner \omega, \quad d\mu_{\mathbb{C}}
=-i \xi \lrcorner \Omega.
\end{equation*}
Since $u$ is invariant under the action of the standard torus $\mathrm{U}(1)^2 \subset \mathrm{U}(2)$, we can use polar coordinates to show that, up to an additive constant, $\mu_{\mathbb{R}}=\frac12 (|a|^2-|b|^2)$. For the complex moment map, straightforward computation gives $\mu_{\mathbb{C}}=z_1z_2$ up to an additive constant. The hyper-K\"ahler identities tell us that 
$$g(d\mu_a,d\mu_b) = g(I_a\xi, I_b\xi) = |\xi|^2\delta_{ab},$$ 
meaning that $d\mu_a$ and $\xi^\flat$ are an orthogonal coframe of $\mathbb{R}^4$ with respect to $g$.
Thus, introducing the function $V=|\xi|^{-2}$ and the connection $1$-form $\theta=V\xi^\flat = VI_1d\mu_1$ of the $S^1$-action, we recover the Gibbons-Hawking ansatz
$$g=V(d\mu_1^2+d\mu_2^2+d\mu_3^2)+V^{-1}\theta^2.$$
Lastly, defining the Euclidean radius $r = (\mu_1^2 + \mu_2^2 + \mu_3^2)^{1/2}$, we can compute that 
$$V = |\xi|^{-2} = 1+\frac{1}{2r},$$
and so we obtain the standard form of the Taub-NUT metric.

\subsection*{Exercise 4: The \texorpdfstring{$\partial\bar \partial$}{\partial\bar \partial}-lemma with estimates}

The statement needed in the proof of Theorem \ref{thm:EKclass} (Claim 2) is an interior estimate: a bounded closed $(1,1)$-form on a ball has a proportionally bounded potential on a smaller concentric ball. We prove a slightly stronger estimate up to the boundary.

\begin{lemma}
    For all $n$ there exists a constant $C$ such that the following holds. Let $\omega$ be a smooth $d$-closed real $(1,1)$-form on a neighborhood of the closed unit ball $\overline{B}\subset\mathbb C^n$. Then there exists a smooth function $u$ on $B$ such that 
\begin{equation*} \sqrt{-1} \partial\bar \partial u= \omega, \quad \|u\|_{L^\infty(B)}\leq C\|\omega \|_{L^\infty (B)}.\end{equation*}
\end{lemma}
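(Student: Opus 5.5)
We construct $u$ in two steps: first a real $1$-form $\beta$ with $d\beta=\omega$ and an explicit sup bound, then a solution of $\bar\partial$ for the $(0,1)$-part of $\beta$. For the first step we use the radial homotopy (Poincaré) operator on the star-shaped ball:
\begin{equation*}
\beta_z(v) = \int_0^1 t\,\omega_{tz}(z,v)\,dt .
\end{equation*}
Since $\omega$ is closed, the Cartan formula gives $d\beta=\omega$ on $B$, and trivially $\|\beta\|_{L^\infty(B)}\leq \|\omega\|_{L^\infty(B)}$. The form $\beta$ is real and smooth up to the boundary. Write $\beta=\beta^{1,0}+\beta^{0,1}$ with $\beta^{1,0}=\overline{\beta^{0,1}}$. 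Comparing bidegrees in $d\beta=\omega$, and using that $\omega$ is of type $(1,1)$, we get
\begin{equation*}
\bar\partial\beta^{0,1}=0,\qquad \partial\beta^{0,1}+\bar\partial\beta^{1,0}=\omega .
\end{equation*}

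For the second step we solve $\bar\partial f=\beta^{0,1}$ on $B$ with $\|f\|_{L^\infty(B)}\leq C\|\beta^{0,1}\|_{L^\infty(B)}$. Given such $f$, set $u=2\,\mathrm{Im} f$, that is, $u=-\sqrt{-1}(f-\bar f)$. Then $\partial\bar f=\beta^{1,0}$, and
\begin{equation*}
\sqrt{-1}\partial\bar\partial u=\partial\bar\partial f-\partial\bar\partial\bar f=\partial\beta^{0,1}+\bar\partial\partial\bar f=\partial\beta^{0,1}+\bar\partial\beta^{1,0}=\omega ,
\end{equation*}
with $\|u\|_{L^\infty}\leq 2C\|\omega\|_{L^\infty}$. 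The function $u$ is smooth because $u$ differs from any smooth local potential by a pluriharmonic function. Alternatively, we can take $f$ smooth directly from the construction below, since $\beta^{0,1}$ is smooth up to $\partial B$.

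The main obstacle is the uniform $L^\infty$ estimate for $\bar\partial$ on the ball. Hörmander's $L^2$ theory alone only gives $L^2$ control. The sup-norm bound for $\bar\partial$-closed $(0,1)$-forms bounded on a strictly pseudoconvex domain is the classical Henkin / Grauert–Lieb integral-formula result, and for the ball the explicit Henkin kernel makes it concrete. I would invoke the Henkin–Ramírez solution operator: it yields $f$ with $\|f\|_{L^\infty(B)}\leq C_n\|\beta^{0,1}\|_{L^\infty(B)}$, and in fact $f$ is $C^{1/2}$. If a more elementary route is wanted, we can use a different two-step construction in the interior setting actually needed in Claim 2. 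Take $v=\Delta^{-1}(2\mathrm{Tr}\,\omega)$ via the Newtonian potential, which is bounded. Then $\omega-\sqrt{-1}\partial\bar\partial v$ has zero trace, and one must still show it is $\sqrt{-1}\partial\bar\partial$ of a bounded function. This again reduces to a $\bar\partial$-problem, so I would simply cite the Henkin estimate as the key input.
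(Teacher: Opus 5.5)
Your proposal is correct and follows essentially the same route as the paper. The paper likewise takes the radial homotopy primitive with its trivial sup bound, then solves $\bar\partial$ for the $(0,1)$-part on the ball with an $L^\infty$ bound via the explicit Henkin-type operator (citing Rudin's Theorem 16.7.2, which also gives the $C^{0,1/2}$ bound you mention), and finally sets $u = 2\,\mathrm{Im}$ of that solution, exactly as you do.
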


\begin{proof}
First, we define the following real $1$-form:
\begin{align*}
    \eta|_z = \sqrt{-1}\sum_{j,k}
  \left(\int_0^1s \omega_{j\bar k}(sz)\,ds\right)
  (z_j\,d\bar z_k-\bar z_k\,dz_j).
\end{align*} 
This obviously satisfies 
\begin{align}\label{eq:eta_bound}
\|\eta\|_{L^\infty(B)}\leq C\|\omega\|_{L^\infty(B)}.
\end{align}
By a straightforward computation, using the closedness of $\omega$, we find $d\eta=\omega$.

The $(0,2)$-part of $d\eta=\omega$ is zero, so $\bar\partial\eta^{0,1}=0$, so we can apply Rudin's explicit solution operator \cite[Theorem~16.7.2]{Rudin} to
$f = \eta^{0,1}$ to obtain a function $v$ such that
\begin{equation}\label{eq:rudin-estimate}
  \bar\partial v=f, \quad
  \|v\|_{L^\infty(B)}
  +[v]_{C^{0,1/2}(B)}
  \leq C\|f\|_{L^\infty(B)}.
\end{equation}
Set $u=2\operatorname{Im}v.$
Then $\sqrt{-1}\partial\bar\partial u = \partial\bar\partial(v-\bar v) = \partial\eta^{0,1}+\bar\partial\eta^{1,0} = d\eta = \omega.$
The required bound follows from \eqref{eq:rudin-estimate} and \eqref{eq:eta_bound}.
\end{proof}

We briefly explain how to construct and estimate this $v$, following Rudin \cite{Rudin}. We start from an arbitrary solution to $\bar\partial q =f$, then normalize the ``holomorphic part'' of $q$ by subtracting the Cauchy transform of $q$,
$$v(z)=q(z)-C[q](z) = \int_{\partial B}\frac{q(z)-q(\xi)}{(1-\langle z,\xi\rangle)^n}\,dA(\xi).$$ 
Here $\langle\cdot,\cdot\rangle$ denotes the standard Hermitian inner product on $\mathbb{C}^n$ (conjugate linear in the second argument). Using Stokes, Rudin writes $v=g+E$,
$$g(z)=\frac1n\int_B\frac{\langle f(w),z-w\rangle}{(1-\langle z,w\rangle)^n(1-\langle w,z\rangle)}\,dV(w),$$
where $g$ extends to a $C^{0,1/2}$ function on $\overline B$ and $E(z)\to 0$ if $z\to\xi\in \partial B$. The key point here is that $g$ is explicit in terms of $f$ while the contributions of the arbitrary solution $q$ are contained in $E$, which vanishes at $\partial B$.
Thus, $v$ can be reconstructed from $g|_{\partial B}$ using the Bochner-Martinelli formula \cite[Section 16.5.8]{Rudin},
\begin{align*}
v(z) &=\int_{\partial B} \frac{1-\left\langle\xi, z\right\rangle}{|\xi-z|^{2n}}\left( \frac 1n \int_B \frac{\left\langle f(w), \xi - w\right\rangle}{(1-\left\langle\xi, w\right\rangle)^n (1-\left\langle w,\xi \right\rangle)}\, dV(w)\right) dA(\xi)  \\
&-\frac1n \int_B \frac{\left\langle f(w), w-z\right\rangle}{|w-z|^{2n}}\, dV(w).
\end{align*} 
Rudin then estimates the $C^{0,1/2}$ seminorm of each of the two summands separately; see the discussion of $J_1$ and $J_2$ on pp. 360--361 of \cite{Rudin}. The upshot is that
$$[v]_{C^{0,1/2}(B)}\leq C\|f\|_{L^\infty(B)}.$$
After subtracting a constant from $v$, the $L^\infty$ bound in \eqref{eq:rudin-estimate} follows from this.

\subsection*{Exercise 5: From derivative estimates to asymptotics (Lemma \ref{lem:1})}

The function $u_0$ will simply be the fiberwise average of $u$,
$$u_0(z)=\frac{1}{{\rm Vol}(Y,g_Y)}\int u(z,y)\, dV_{g_Y}(y).$$
For $\ell=0$ the desired estimate is then obvious from the triangle inequality. For $\ell = 1$ it follows by integrating the $y$-gradient of $u(z,y)$ along a minimal $g_Y$-geodesic from a point $y_0(z)$ with $u(z,y_0(z)) = u_0(z)$ to an arbitrary point.

If $(Y,g_Y)$ is a flat torus, this argument can be iteratively extended to all orders $\ell \geq 2$. Indeed, for all $k \in \{1,\ldots,\ell-1\}$, consider the component functions of the tensor $\nabla_y^k u(z,y)$ with respect to a parallel orthonormal frame of $Y$. Their average over $Y$ is always zero. Here we crucially use not only the flatness of $Y$ but also the fact that $\partial Y = \emptyset$. Note that Lemma \ref{lem:1} is false for $Y = [0,1] \subset \mathbb{R}$ and $\ell = 2$.

If $(Y,g_Y)$ is not a flat torus, we are not aware of an argument by direct estimates. Instead we can use the following indirect argument \cite[Lemma 3.3]{HeinTosatti2020}.

\begin{lemma}\label{lem:tensor-poincare}
    For all closed Riemannian manifolds $(Y,g_Y)$ and for all $p,q \in \mathbb{N}$ there exists a constant $C$ such that 
    for all type $(p,q)$ tensors $T$ on $Y$ we have that 
    \begin{equation*}
        \|\nabla_{g_Y} T\|_{C^0(Y,g_Y)} \leq C \|\nabla^2_{g_Y} T\|_{C^0(Y,g_Y)}.
    \end{equation*}
\end{lemma}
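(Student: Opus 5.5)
I will argue by contradiction and compactness, in the same blowup spirit as Section \ref{sec:fromLTtoEK}. Suppose the estimate fails for some $(p,q)$. Then there are tensors $T_i$ of type $(p,q)$ on $Y$ with $\|\nabla T_i\|_{C^0} = 1$ and $\|\nabla^2 T_i\|_{C^0} \to 0$.

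Since adding a parallel tensor to $T_i$ changes neither side, I first normalize $T_i$. Let $P$ denote the finite-dimensional space of $g_Y$-parallel $(p,q)$-tensors, and let $T_i$ be $L^2$-orthogonal to $P$; replacing $T_i$ by $T_i - \pi_P T_i$ achieves this. I now want a uniform $C^0$ bound on $T_i$. Here I use the Poincaré type inequality for the rough Laplacian. The operator $\nabla^*\nabla$ on $(p,q)$-tensors is self-adjoint, nonnegative and elliptic on the closed manifold $Y$. Its kernel is exactly $P$, since $\int\langle\nabla^*\nabla T,T\rangle=\int|\nabla T|^2$. It therefore has a spectral gap, so
$$\|T_i\|_{L^2}\le C\|\nabla T_i\|_{L^2}\le C.$$
Combining this with the Sobolev embedding applied to $\nabla T_i$, which is bounded in $C^1$ because $\|\nabla T_i\|_{C^0}=1$ and $\nabla^2 T_i\to0$, I get $T_i$ bounded in $C^{1,1}$, i.e. in $W^{2,\infty}$.

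By Arzelà–Ascoli, after passing to a subsequence, $T_i \to T_\infty$ in $C^{1,\beta}$ for every $\beta<1$. The $\nabla^2 T_i$ converge to $0$ uniformly, so in the sense of distributions $\nabla^2 T_\infty = 0$. Hence $\nabla T_\infty$ is a parallel tensor of type $(p,q+1)$. Moreover $\|\nabla T_\infty\|_{C^0}=1$, because the convergence is $C^1$.

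The key step, which I expect to be the main point, is to show that a tensor whose covariant derivative is parallel must itself be parallel on a closed manifold. Set $S=\nabla T_\infty$, which is parallel. Then $\nabla^*\nabla T_\infty = -\mathrm{Tr}_{12}S$ is also parallel, and in particular has constant norm. Integrating by parts over $Y$, which uses $\partial Y=\emptyset$, gives
$$\int_Y |\nabla T_\infty|^2 = \int_Y \langle \nabla^*\nabla T_\infty, T_\infty\rangle = -\int_Y\langle \mathrm{Tr}_{12} S, T_\infty\rangle .$$
Since $\mathrm{Tr}_{12}S$ is parallel, it lies in $P$. The tensor $T_\infty$ is $L^2$-orthogonal to $P$, being an $L^2$ limit of tensors orthogonal to $P$. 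So the right-hand side vanishes, and $\nabla T_\infty=0$, which contradicts $\|\nabla T_\infty\|_{C^0}=1$.

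The remaining care is in the regularity and the use of the trace. The integration by parts requires $T_\infty\in W^{2,\infty}$, which holds since $T_\infty$ is the $C^{1,\beta}$ limit of tensors bounded in $C^{1,1}$. Also, $\nabla^*\nabla T_\infty$ only involves the trace of $S$, but that trace is still parallel, and this is all the argument needs.
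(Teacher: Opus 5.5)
Your overall strategy is the paper's. You argue by contradiction and subtract the $L^2$-projection onto the finite-dimensional space $P$ of parallel tensors. You then get a uniform bound, pass to a $C^{1,\beta}$ limit $T_\infty$ whose covariant derivative is parallel, and integrate by parts on the closed manifold $Y$.

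The one real difference is how you get the uniform bound. You obtain $\|T_i\|_{L^2}\le C\|\nabla T_i\|_{L^2}$ from the spectral gap of the elliptic operator $\nabla^*\nabla$, whose kernel is $P$. You then upgrade this to a $C^0$ bound using the gradient bound. That upgrade is really the observation that $|T_i|$ is $1$-Lipschitz because $\|\nabla T_i\|_{C^0}=1$, so $\sup|T_i|\le\inf|T_i|+\mathrm{diam}(Y)$; it is not a Sobolev embedding of $\nabla T_i$. The paper instead proves the same Poincar\'e-type bound by a second, nested compactness argument that uses only Arzel\`a--Ascoli. Your route is shorter if one takes discreteness of the spectrum of $\nabla^*\nabla$ for granted; the paper's is more elementary.

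However, the step you single out as the key one is misstated. You write $\nabla^*\nabla T_\infty=-\mathrm{Tr}_{12}S$ with $S=\nabla T_\infty$. Any contraction of the $(p,q+1)$-tensor $S$ has total rank $p+q-1$, so it cannot equal the $(p,q)$-tensor $\nabla^*\nabla T_\infty$, and the claim that it lies in $P$ does not make sense.

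The correct identity is $\nabla^*\nabla T_\infty=\nabla^*S$. This is minus the trace of $\nabla S=\nabla^2 T_\infty$ over its two derivative slots, and it vanishes identically because $S$ is parallel. Hence
\[
\int_Y|\nabla T_\infty|^2=\int_Y\langle \nabla T_\infty,S\rangle=\int_Y\langle T_\infty,\nabla^*S\rangle=0
\]
directly, which is exactly the paper's final line. In particular, the orthogonality $T_\infty\perp_{L^2}P$ plays no role at this point; it is needed only to obtain the uniform bound earlier. With this correction your argument is complete.
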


\begin{proof}
If this is false, then we can assume that there exists a sequence $T_i$ such that $\|\nabla^2 T_i\|_{C^0} \to 0$ as $i \to \infty$
but $\|\nabla T_i\|_{C^0} = 1$ for all $i$. Consider the space
$$\mathscr{P} = \{ T \in \Gamma(T^{p,q}Y) : \nabla T = 0 \}.$$
It is clear that $\dim\,\mathscr{P} < \infty$. Let $P_i$ be the $L^2$-orthonormal projection of $T_i$ onto $\mathscr{P}$ and let $Q_i = T_i - P_i$. Then $\|\nabla Q_i\|_{C^0(Y)}=1$, $\|\nabla^2 Q_i\|_{C^0(Y)}\rightarrow 0$, but $Q_i\perp_{L^2} \mathscr{P}$.

\medskip

\noindent {\bf Claim:} The sequence $Q_i$ is uniformly bounded in $C^0(Y)$.

\medskip

\noindent {\bf Proof of Claim:} Otherwise, after passing to a subsequence and normalizing, we get a sequence $Q_i'$ such that $\|Q'_i\|_{C^0} = 1$, $\|\nabla Q'_i\|_{C^0} \to 0$ and $\|\nabla^2Q'_i\|\to 0$. By Arzel\`a-Ascoli, a subsequence converges in $C^{1,\alpha}$ to some $Q'_\infty$ satisfying $\nabla Q'_\infty = 0$, hence $Q_\infty' \in \mathscr{P}$. But, also, $Q'_\infty \perp_{L^2} \mathscr{P}$, so $Q'_\infty = 0$, contradicting $\|Q'_\infty\|_{C^0} = 1$.

\medskip

Thanks to the above Claim we can again apply Arzel\`a-Ascoli to get $
Q_i \to Q_\infty$ in $C^{1,\alpha}$ after passing to a subsequence. For any $S \in \Gamma(T^{p,q+2}Y)$ consider the pairing  
\begin{equation*}
\langle \nabla^2 Q_i, S \rangle_{L^2}
=
\langle \nabla Q_i, \nabla^* S \rangle_{L^2} \to \langle \nabla Q_\infty, \nabla^*S \rangle_{L^2}.
\end{equation*}
Since the left-hand side goes to zero for any $S$, we see that  $\nabla Q_\infty\in C^{0,\alpha}$ is weakly parallel. Thus, $\nabla Q_\infty$ is smooth and parallel. Thus,
\begin{equation*}
\langle \nabla Q_\infty, \nabla Q_\infty \rangle_{L^2} = \langle \nabla^* \nabla Q_\infty, Q_\infty \rangle_{L^2} =\langle \operatorname{Tr}_{12}(\nabla^2 Q_\infty), Q_\infty \rangle_{L^2} = 0.
\end{equation*}
Thus, $\nabla Q_\infty=0$, contradicting $\|\nabla Q_\infty\|_{C^0}=1$. 
\end{proof}

Iterating Lemma~\ref{lem:tensor-poincare} on the tensors
$T = f,\nabla_{g_Y} f,\ldots, \nabla_{g_Y}^{\ell-2}f$, 
we get
\begin{equation}\label{eq:iterated-poincare} \|\nabla_{g_Y}f\|_{C^0(Y,g_Y)} \leq C_\ell \|\nabla_{g_Y}^\ell f\|_{C^0(Y,g_Y)},\end{equation}
where $C_\ell=C_\ell(Y,g_Y)$. Applying \eqref{eq:iterated-poincare} to $f = u_z = u(z,\cdot)$ for a fixed $z$, we get
$$\|\nabla_{t^2g_Y}u_z\|_{C^0} \leq C_\ell t^{\ell-1} \|(\nabla_{t^2g_Y})^\ell u_z\|_{C^0}\leq C_\ell t^{\ell-1}C.$$
Finally, by integrating $\nabla_{t^2 g_Y}u_z$ from $y_0(z)$ to an arbitrary point on $\{z\} \times Y$,
\begin{align*} &|u(z,y) - u_0(z)| \leq \mathrm{diam}(Y,t^2g_Y) \|\nabla_{t^2g_Y}u_z\|_{C^0} \leq \mathrm{diam}(Y,g_Y) C_\ell t^\ell C. \end{align*}
Taking the supremum over $B\times Y$ gives $\|u-u_0\|_{C^0(B\times Y)} \leq ACt^\ell$
for some constant $A=A(Y,g_Y,\ell)$ and we finish the proof of Lemma \ref{lem:1}.


\begin{acknowledgement}
YC was funded by the European Research Council (ERC) through the SiGMA project (Grant Agreement No. 101125012) and by the Erd\H{o}s Center postdoctoral fellowship at the Alfr\'ed R\'enyi Institute of Mathematics. HJH was funded by the Deutsche Forschungsgemeinschaft (DFG, German Research Foundation) under Germany’s Excellence Strategy EXC 2044/2–39068 5587 ``Mathematics M\"unster: Dynamics–Geometry–Structure'' and by the CRC 1442 ``Geometry: Deformations and Rigidity'' of the DFG.  All three authors would like to thank the Erd\H{o}s Center for hosting the Simons School on singular K\"ahlerian metrics and Hermitian geometry and for their hospitality, as well as all of the organizers involved.
\end{acknowledgement}

\eject

\end{document}